\theoremstyle{plain}
\newtheorem{theo}{Theorem}
\newtheorem{lemm}[theo]{Lemma}
\newtheorem{coro}[theo]{Corollary}
\theoremstyle{definition}
\theoremstyle{remark}
\newtheorem{rema}[theo]{Remark}
\newtheorem{theoA}{Theorem}
\newtheorem{lemmA}[theoA]{Lemma}
\newtheorem{coroA}[theoA]{Corollary}
\definecolor{FlatRed}{RGB}{231,76,60}
\definecolor{FlatGreen}{RGB}{46,204,113}
\definecolor{FlatBlue}{RGB}{52,152,219}
\definecolor{FlatYellow}{RGB}{241,196,15}
\colorlet{FlatViolet}{FlatRed!50!FlatBlue}
\colorlet{FlatBrown}{FlatRed!50!FlatGreen}
\colorlet{FlatOrange}{FlatRed!50!FlatYellow}
\colorlet{FlatCyan}{FlatGreen!50!FlatBlue}
\title{Reinforced loop soup via Wilson's algorithm}
\author{Yinshan Chang}
\address{College of Mathematics, Sichuan University, China}
\email{ychang@scu.edu.cn}
\author{Yichao Huang$^{\ast}$}
\address{School of Mathematics and Statistics, Beijing Institute of Technology, China}
\email{yichao.huang@bit.edu.cn}
\author{Dang-Zheng Liu}
\address{School of Mathematical Sciences, University of Science and Technology of China, China}
\email{dzliu@ustc.edu.cn}
\author{Xiaolin Zeng}
\address{Institut de Recherche Mathématique Avancée, Université de Strasbourg, France}
\email{zeng@math.unistra.fr}
\begin{document}

\begin{abstract}
The goal of this note is twofold: first, we explain the relation between the isomorphism theorems in the context of vertex reinforced jump process, discovered in~\cite{MR4021254,MR4255180} and the standard Markovian isomorphism theorems for Markovian jump processes; second, we introduce the vertex reinforced counterpart of the standard Poissonian loop soup developed by Le Jan~\cite{MR2675000}. To this end, we propose an algorithm that can be viewed as a variant of Wilson's algorithm with reinforcement. We establish the isomorphism theorems for the erased loops and the random walk from this algorithm, and in particular provide a concrete construction of the reinforced loop soup via a random process with a reinforcement mechanism.
\end{abstract}

\maketitle

\section{Introduction}
Kurt Symanzik formulated a framework for Euclidean quantum field theory in~\cite{symanzik1968euclidean}, establishing a profound connection between Euclidean quantum field theory and the structure of classical statistical mechanics. Especially, he introduced the random path expansion for the Green function~\cite{MR782955,MR471757}, which was further developed in~\cite{MR648362,MR719815,MR693227,MR902412,MR678000}. The goal of this current paper is to consider Symanzik's loop expansion idea for Euclidean quantum field theory in the context of the so-called $\text{H}^{2|2}$ supersymmetric hyperbolic sigma model in relation to a class of random processes with reinforcement.

One of the realizations of Symanzik's loop (or ``gas/soup'') expansion for the free field theory is known in the mathematics community under the name of Poissonian loop soup, both in the discrete (random walk loop soup~\cite{MR2255196}, Markovian loop soup~\cite{MR2675000}) and in the continuum (Brownian loop soup~\cite{MR2045953}). These objects are connected to the discrete Gaussian free field and the continuous Gaussian free field~\cite{MR2815763,MR2932978}. Loop soup provides an important toolbox for a non-perturbative approach to studying phenomena in quantum field theory. For instance, the Brownian loop soup in dimension two is used as an essential tool to construct the conformal loop ensembles~\cite{MR2979861} and is thus connected to various models of statistical mechanics (especially the interfaces in two-dimensional critical systems). In the case of the Gaussian free fields, the loops in the loop soup are thrown independently as the whole random process can be realized as a Poisson point process on the space of loops. There is a vast literature in the probability community on the Poissonian or Brownian loop soup, in particular on the study of local times~\cite{MR156383,MR154337}, occupational time fields and isomorphism theorems~\cite{MR585179,MR2120243,MR1813843}, random interlacements~\cite{MR2892408,MR3502602} and the list is far beyond our ability to survey here.

It is interesting and important to extend the theory of Symanzik to interacting Euclidean quantum field theory beyond the Gaussian case. It was also Symanzik's original motivation to consider interacting random paths or random loop ensembles. If there were non-quadratic interaction, e.g. $\varphi^{4}$ term or interaction through fermionic variables, the corresponding loop expansion of Symanzik would no longer be Poissonian, and the standard toolbox of Markov processes has to be extended to implement Symanzik's ideas. Following the loop expansion idea of Symanzik, we introduce and study a non-Poissonian loop soup model with reinforcement, for which the underlying field is a supersymmetric hyperbolic sigma model, first introduced by Zirnbauer~\cite{MR1134935} inspired by the work of Efetov~\cite{efetov1999supersymmetry}. The supersymmetric model of Zirnbauer is also abbreviated as the $\text{H}^{2|2}$-model, because the field takes its value in a hyperbolic space which can be parametrized by two bosonic variables $x,y$ and two anti-commuting fermionic variables $\xi,\eta$. Roughly speaking, the $\text{H}^{2|2}$-model is a supersymmetric quantum field theory with the following nearest-neighbor interaction:
\begin{equation*}
    \sqrt{1+x_i^2+y_i^2+2\xi_{i}\eta_{i}}\cdot\sqrt{1+x_j^2+y_j^2+2 \xi_{j}\eta_{j}}.
\end{equation*}
The $\text{H}^{2|2}$-model has been shown to have a phase transition in its coupling parameter in~\cite{MR2736958,MR2728731} on $\mathbb{Z}^{3}$. Later, it has drawn attention to the probability community because of the unexpected discovery in~\cite{MR3420510} of its relation to the so-called Vertex Reinforced Jump Process~\cite{MR1900324,MR1030727}, a non-Markovian jump process which interacts with its own history. The Vertex Reinforced Jump Process, after a suitable random time change, gains an extra property known as partial exchangeability~\cite{MR556418}, and is therefore a mixture of Markov jump processes in some random environment, the law of which is exactly given by some horospherical coordinate of the $\text{H}^{2|2}$-model. Probabilistic models related to the $\text{H}^{2|2}$-model are disordered systems (statistical mechanics models with random coupling parameters). See Section~\ref{subse:supersymmetric_hyperbolic_sigma_model} below for a brief overview of this model, and~\cite{MR2728731,MR2736958,MR3420510,MR3189433,MR3729620,MR3936158,MR3904155} for a far-from-complete list of references.

In~\cite{MR4021254,MR4255180}, several isomorphism theorems have been discovered as the Vertex Reinforced Jump Process counterparts of the standard Markovian forms of respectively BFS-Dynkin isomorphism, generalized second Ray-Knight theorem, and Eisenbaum's isomorphism. In these three isomorphism theorems, the counterpart of the Gaussian free field in the above-mentioned theorems becomes the $\text{H}^{2|2}$ supersymmetric hyperbolic sigma model. This is an important step towards implementing Symanzik's idea in a supersymmetric quantum field theory with non-trivial interaction, thus with interacting random paths. We feel that it is useful to also investigate Symanzik's loop expansion theory of the $\text{H}^{2|2}$-model as a non-perturbative approach to understanding certain integrable aspects of this supersymmetric hyperbolic sigma model. We propose a candidate for the counterpart of the standard Poissonian loop soup in the case of $\text{H}^{2|2}$-model, which we call the reinforced loop soup. While the reinforced loop soup can no longer be realized as a Poisson point process, one of our main contributions is the mathematical construction of this object, using a variant of Wilson's algorithm~\cite{MR1427525,MR1611693} with a natural reinforcement mechanism. It turns out that many features of the Poissonian loop soup still hold when the Gaussian free field is replaced by the $\text{H}^{2|2}$ supersymmetric hyperbolic sigma model. In particular, we establish a Dynkin-type isomorphism theorem, relating the occupation time field of reinforced loop soup to natural observables of the $\text{H}^{2|2}$ field. To show all these isomorphism theorems, we introduce a key tool called the supersymmetric Bayes formula. This formula allows one to translate quite systematically any standard Markovian isomorphism theorem into its Vertex Reinforced Jump Process counterpart. We now give a summary of our approach and main results.

\subsection{Main results}\label{sub:main_results}
Our contribution in this paper is resumed as follows:
\begin{enumerate}
    \item A reinforced Wilson's algorithm. We conceive a variant of Wilson's algorithm with a reinforcement mechanism, which we use to concretely construct the reinforced loop soup. In short, we replace the Markovian exploration in the standard Wilson's algorithm with the Vertex Reinforced Jump Process, and in such a way obtain a reinforced spanning tree and a reinforced random loop collection. The reinforced random loop collection from this algorithm gives a trajectory explanation to the Dynkin-type reinforced loop soup isomorphism below.
    \item Connection of the Dynkin-type isomorphism theorems for the $\text{H}^{2|2}$-model to the standard Markovian isomorphism theorems. We show that the three isomorphism theorems for the $\text{H}^{2|2}$-model discovered in~\cite{MR4021254,MR4255180} can be obtained as annealed versions of the corresponding standard Markovian Dynkin-type isomorphism theorems, by integrating them in the random environment generated by the Vertex Reinforced Jump Process. This idea is based on the connection between the $\text{H}^{2|2}$-model and the Vertex Reinforced Jump Process discovered by Sabot and Tarrès~\cite{MR3420510}, and our main tool is a supersymmetric Bayes formula.
    \item Dynkin-type isomorphism theorem for the reinforced loop soup. Using the reinforced Wilson's algorithm and our method of proof for the $\text{H}^{2|2}$ isomorphism theorems for the Vertex Reinforced Jump Process, we propose and prove a $\text{H}^{2|2}$ isomorphism theorem for the reinforced loop soup. Indeed, we show that the occupation time field of the loops obtained from the reinforced Wilson's algorithm is the correct $\text{H}^{2|2}$ counterpart in Le Jan's isomorphism theorem for the Poissonian loop soup. This gives a natural construction for a non-Poissonian loop soup model and establishes an integrability result thereof.
    \item Complete reconstruction of the reinforced loop soup with the reinforced Wilson's algorithm. We use the reinforced Wilson's algorithm to give a complete reconstruction of reinforced loop soups with arbitrary parameter $\alpha>0$, based on the idea of a Poisson-Dirichlet decomposition of random loops due to Le Jan. We also extend the $\text{H}^{2|2}$ isomorphism theorem for the reinforced loop soup to higher-dimensional $\text{H}^{2k|2k}$ versions with positive integer $k$.
\end{enumerate}

We hope that the definition and the study of the reinforced loop soup can lead to a better understanding of the $\text{H}^{2|2}$-model, and many interesting questions remain to be addressed. To list a few, our method should give precise information on the reinforced random interlacements in the $\text{H}^{2|2}$-model. Another more challenging direction would be to investigate the continuum limit of the reinforced loop soup, towards a definition of the reinforced Brownian loop soup or reinforced loop ensembles for the $\text{H}^{2|2}$-model.

\subsection{Structure of the paper}
This paper is organized as follows.

In Section~\ref{sec:backgrounds} we gather the necessary backgrounds for this paper, including the standard forms of Wilson's algorithm, the definition of the standard Poissonian loop soup, the supersymmetric fields and the Vertex Reinforced Jump Process. In Section~\ref{sec:wilson_s_algorithm_with_reinforcement} we introduce a variant of Wilson's algorithm with reinforcement mechanism. In Section~\ref{sec:a_supersymmetric_bayes_formula} we present a useful lemma (called supersymmetric Bayes formula) relating supersymmetric free field expectations with the $\text{H}^{2|2}$ expectations. In Section~\ref{sec:isomorphism_theorems_for_the_vertex_reinforced_jump_process}, we give alternative proofs of the three BFS-Dynkin type isomorphisms for the Vertex Reinforced Jump Process. In Section~\ref{sec:LoopSoupIsomorphisms} we use the reinforced Wilson's algorithm to construct the reinforced loop soup occuptional time field and establish the corresponding $\text{H}^{2|2}$ isomorphism theorem thereof. In Section~\ref{sec:some_extensions_of_our_results} we reconstruct the reinforced loop soup process with arbitrary parameter $\alpha > 0$ using the reinforced Wilson's algorithm, and extend the isomorphism theorem for the reinforced loop soup to higher-dimensional supersymmetric hyperbolic sigma models.

\subsection*{Acknowledgement}
Y.C. is supported by National Natural Science Foundation of China (grant number 11701395). Y.H. is partially supported by National Key R\&D Program of China (No. 2023YFA1010103) and by NSFC-12301164, and thanks the Institut de Recherche Mathématique Avancée (IRMA) of the Université de Strasbourg for their kind hospitality. D-Z.L. is supported by the National Natural Science Foundation of China \#12371157 and \#12090012. X.Z. acknowledges the Agence Nationale de la Recherche for their financial support via ANR grant RAW ANR-20-CE40-0012-01 and IRMIA++.

\section{Backgrounds}\label{sec:backgrounds}
We collect some elementary backgrounds for the main objects studied in this paper.

\subsection{Graph Laplacian}
Consider a finite graph of vertex set $V=\{i,j,k,\dots\}$ and non-oriented edges $E(V)=\{(ij)\}$. To each edge $(ij)\in E$ is associated a non-negative \emph{weight} $W_{ij}=W_{ji}\geq 0$. When there is no edge between two vertices $i,j\in V$, it is equivalent in this paper to add the edge $(ij)$ to the edge set $E(V)$ and declare that $W_{ij}=W_{ji}=0$ (and implicitly $W_{ii}=0$). To each configuration of weights on the graph is associated a square symmetric \emph{Laplacian matrix} $\Delta_{W}$ of size $|V|$ defined by
\begin{equation*}
    (\Delta_{W})_{ij}=\begin{cases} -W_{ij} & i\neq j \\ \sum_{k\in V\setminus\{i\}}W_{ik} & i=j \end{cases}.
\end{equation*}

We sometimes consider a distinguished vertex $\delta$ called the \emph{root vertex} of the graph. This can either be done by assigning a vertex to be the root, or equivalently, we can \emph{augment} the graph $(V,E(V),W)$ in the following way: the vertex set is augmented to $\widetilde{V}=V\cup\{\delta\}$, the edge set is augmented to $E(\widetilde{V})=E(V)\cup\{(i\delta), i\in V\}$ and we impose that the augmented edge weights satisfy the condition that at least one of the $\{W_{i\delta}\}_{i\in V}$ is positive (so that $\delta$ is not disconnected from the original graph). The augmented graph will be denoted by $\widetilde{V}, E(\widetilde{V})$, and $\widetilde{W}$ for respectively its vertex set, edge set, and edge weight set. All corresponding definitions on the augmented graph will be decorated with an extra $\widetilde{~~}$.

\subsection{Uniform spanning tree}\label{sec:uniform_spanning_tree}
Consider a finite weighted connected graph $V$. Here, the \emph{connectness} of $V$ means that one can move from any vertex $i\in V$ to another vertex $j\in V$ using only positively weighted edges. A \emph{spanning tree} $T$ is a tree of $V$ (that is, a connected subgraph of $V$ without loops) whose set of vertices is exactly $V$. The \emph{weight} $W(T)$ of the spanning tree $T$ is the product of all the weights of its edges, i.e.
\begin{equation*}
    W(T)=\prod_{(ij)\in E(T)}W_{ij}.
\end{equation*}

When $V$ is finite, the number of spanning trees on $V$ is large (more precisely $|V|^{|V|-2}$ by Cayley's formula), but it remains finite. A random spanning tree $\mathcal{T}$ is called \emph{uniform} if a sample $T$ is chosen with probability proportional to $W(T)$, the weight of $T$. In the literature, $\mathcal{T}$ constructed above is often called \emph{weighted uniform spanning tree} (when no weight is specified, the standard uniform spanning tree is the special case of constant weight $W\equiv 1$): our graphs will always be weighted and we drop the term ``weighted'' in the sequel. Notice that it makes no difference whether $V$ is rooted or not in the above definition, and in sections dealing with Wilson's algorithms, we write a rooted graph as $V$ instead of $\widetilde{V}$ for simplicity and to be conform with existing conventions.

\subsection{The standard Markovian Wilson's algorithm}\label{sec:ClassicalWilson}
Given a finite connected graph $V$ with root vertex $\delta$, the celebrated \emph{Wilson's algorithm}~\cite{MR1427525,MR1611693} generates a random (uniform) spanning tree $T$ of $V$ using a procedure known as the \emph{loop erasure}. For later purposes, we recall the standard Markovian\footnote{The terminology ``standard Markovian'' is used in opposition to ``reinforced'' or ``supersymmetric''.} Wilson's algorithm in two different forms. Our presentation follows~\cite[Chapter~4]{MR3616205}.

\subsubsection{Wilson's algorithm and loop erasure}
The most well-known form of the standard Markovian Wilson's algorithm uses the idea of \emph{loop erasure} (or \emph{cycle erasure}). Let $P:i_0\to i_1\to i_2\to\dots$ be a path in $V$, with $(i_k)_{k=0,1,\dots}$ vertices of $V$ appearing in the order in which they are visited. The loop erased path $\mathrm{LE}(P)$ is the path obtained by \emph{chronologically} erasing loops that appear in $P$. It is clear from the construction that $\mathrm{LE}(P)$ is \emph{self-avoiding}, that is, it does not contain any loop.

The standard Markovian Wilson's algorithm with loop erasure procedure can be described as follows. First, order the vertices arbitrarily, say $V=(i_0,i_1,\dots,i_{|V|-1})$, and declare $\delta=i_0$ to be the root of $V$. Let $T(0)=\{\delta\}$ be the initial tree, and inductively grow the tree $T(i)$ in the following way. If $T(i)$ spans $V$, then we stop. Otherwise, let $x$ be the vertex with the smallest subscript not in $T(i)$, and consider an independent Markov random walk starting from $x$ with edge weights $(W_{e})_{e\in E(V)}$ until the first time it hits $T(i)$. Perform the loop erasure operation on the path of this Markov random walk, and add the self-avoiding branch obtained by loop erasure to the tree $T(i)$. This constructs the next $T(i+1)$, which is a tree by self-avoidedness. The final tree $\mathcal{T}$ is spanning and random, and the observation of Wilson is the following (see~\cite[Theorem~4.1]{MR3616205}):
\begin{lemmA}[Wilson's algorithm with loop erasure]\label{lemm:classicalWilson_loopErasure}
The random spanning tree $\mathcal{T}$ constructed via the loop erasure algorithm is the uniform spanning tree of $V$.
\end{lemmA}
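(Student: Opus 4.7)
The plan is to use Wilson's original \emph{cycle popping} reformulation of the algorithm, which encodes the whole procedure as a deterministic function of a convenient collection of i.i.d.\ random variables and makes the resulting probability law transparent.

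First I would set up the stacks: to each non-root vertex $v \in V \setminus \{\delta\}$ attach an independent infinite sequence of arrows $(A_v^{(1)}, A_v^{(2)}, \dots)$, where each $A_v^{(k)}$ points from $v$ to a neighbor $u$ with probability $W_{vu}/\sum_{w} W_{vw}$. At any stage, the ``top'' arrows define a functional graph on $V\setminus\{\delta\}$ whose connected components are either trees hanging off $\delta$ or contain a unique directed cycle. A \emph{pop} consists of removing the top arrows along one such cycle, revealing the next layer of the stacks; the procedure halts once the top layer is cycle-free, producing a spanning tree oriented toward $\delta$. I would then check that Wilson's loop-erased random walk is simply one particular scheduling of pops: growing a walk from a fresh vertex and chronologically erasing the loops it creates is the same as sequentially popping the cycles those loops form.

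The technical heart of the proof is the order-invariance lemma: the final spanning tree and the multiset of popped cycles are deterministic functions of the stacks, independent of the order in which cycles are popped. I would prove this by the classical exchange argument. If two cycles $C \neq C'$ are both visible on top of the stacks, then either they are vertex-disjoint---in which case they obviously commute---or they share vertices, and one verifies directly that the arrows making up $C'$ still occupy the tops of their stacks after $C$ has been popped, because a cycle uses exactly one outgoing arrow per vertex it visits. A short induction then interchanges any two complete pop orderings.

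Granted order-invariance, I compute the law of $\mathcal{T}$ directly. For any fixed spanning tree $T$ oriented toward $\delta$, let $T(v)$ denote the parent of $v$ in $T$; the event $\{\mathcal{T} = T\}$ decomposes according to the (order-independent) set $\mathcal{C}$ of cycles that get popped, and by independence of the stacks,
\[
\mathbb{P}[\mathcal{T} = T] = \left( \prod_{v \in V \setminus \{\delta\}} \frac{W_{v,T(v)}}{\sum_{u} W_{vu}} \right) \cdot \sum_{\mathcal{C}} \prod_{C \in \mathcal{C}} \mathbb{P}[C],
\]
where the sum runs over admissible cycle collections. Since the second factor does not involve $T$, it is a common normalization constant, and the first factor is proportional to $W(T) = \prod_{(ij) \in E(T)} W_{ij}$, recovering the uniform spanning tree law. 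The main obstacle is the order-invariance lemma in the previous paragraph; once it is established, everything else is bookkeeping. The only additional technicality is almost sure termination of the popping procedure, which follows from finite expected hitting time of $\delta$ by the underlying Markov chain on the finite connected graph $V$.
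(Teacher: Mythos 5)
Your proposal is correct and takes essentially the same route as the paper, which does not reprove this classical fact but refers to the cycle-popping proof recorded in~\cite[Theorem~4.1 and Lemma~4.2]{MR3616205} (cf.\ Lemma~\ref{lemm:classicalWilson_cyclePopping}); the deterministic order-invariance of popping that you identify is exactly the core argument the paper highlights. One minor remark: two \emph{distinct} cycles that are simultaneously visible can never share a vertex, since each vertex carries a unique top arrow, so the second case of your exchange step is vacuous as stated; the substantive point, as in the cited reference, is that a currently visible cycle gets popped in \emph{every} maximal popping order, which is what your concluding induction should be phrased to deliver.
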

In particular, $\mathcal{T}$ is independent of the choice of the root and of the way we numbered the vertices. The next construction shows that at each step when $T(i)$ is constructed, we can reorder the remaining vertices, and the resulting $\mathcal{T}$ will still be the uniform spanning tree. This last observation (see the paragraph under~\cite[Theorem~4.1]{MR3616205}) seems to be less well known and is sometimes omitted in some textbooks, but will be crucial in our construction of the reinforced version of Wilson's algorithm in Section~\ref{sec:wilson_s_algorithm_with_reinforcement}.

\subsubsection{Wilson's algorithm and cycle popping}\label{subse:wilson_s_algorithm_and_cycle_popping}
We recall an alternative description of the standard Markovian Wilson's algorithm. The key idea is usually called \emph{cycle popping}.

Consider a finite connected graph $V$ with root vertex $\delta$. Suppose that a stack of cards $\{S^{i}_k\}_{k\geq 1}$ is situated at each vertex $i\in V$ except at the root vertex $\delta$, with the card $S^{i}_1$ on top of $S^{i}_2$, then $S^{i}_2$ on top of $S^{i}_3$ etc. On each card, an arrow is given, which tells a walker where to go for the next step. The card is ``popped'' (or removed) and the next card revealed when the walker uses this card to go to the next vertex. If the arrow on the card $S^{i}_k$ is sampled independently according to the transition probability at the vertex $i$ to its neighbors using the weights $(W_{ij})_{j\in V}$, i.e.
\begin{equation*}
    \mathbb{P}[i\to j]=\frac{W_{ij}}{\sum_{k\in V}W_{ik}},
\end{equation*}
then the law of the walker is that of a standard Markov chain with edge weights $(W_{ij})_{i,j\in V}$ and root vertex $\delta$. If further the walker stays at the vertex $i$ with the correct exponential transition time before jumping (that is, an independent exponential variable with parameter $\sum_{k\in V}W_{ik}$), then we obtain the standard continuous Markov jump process on $V$ with edge weights $(W_{ij})_{i,j\in V}$ and killed at the root vertex $\delta$.

Now we introduce the idea of \emph{cycle popping}. Consider a configuration of stacks as above, where we can observe one \emph{visible} card at the top of the stack at each vertex except at the root vertex $\delta$. Connecting each vertex to its neighbor indicated on the visible card, we obtain an \emph{oriented} graph of $V$ rooted at $\delta$. If this oriented graph has no cycle, then it is a (oriented) spanning tree with root $\delta$. If this oriented graph has a cycle, then we ``pop'' this cycle by removing all the visible cards in this cycle, revealing the next cards in the respective stacks. We obtain a new oriented graph, and we continue this cycle popping procedure until it stops. If this algorithm stops, we unorient the oriented spanning tree, and obtain a random spanning tree of $V$ rooted at $\delta$.\footnote{If we want to run Wilson's algorithm on an oriented graph, choose the outgoing weights in the definition of the jump rates above, and do not unorient the final spanning tree.} The standard Markovian Wilson's algorithm with cycle popping is the following statement (see~\cite[Lemma~4.2]{MR3616205}):
\begin{lemmA}[Wilson's algorithm with cycle popping]\label{lemm:classicalWilson_cyclePopping}
The random spanning tree $\mathcal{T}$ constructed via the cycle popping algorithm is the uniform spanning tree of $V$.
\end{lemmA}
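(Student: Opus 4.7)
The plan is to follow Wilson's classical argument for the cycle-popping algorithm, whose combinatorial heart is that the \emph{multiset of popped cycles} is a deterministic functional of the full stack configuration $\omega = (S^i_k)_{i\neq\delta,\,k\geq 1}$, independent of the order in which the cycles are popped. From this invariance the weighted uniform law of $\mathcal{T}$ will drop out by a direct factorization of the law of $\omega$.

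First I would work deterministically. At any stage of the algorithm, the visible cards define an oriented graph on $V$ with exactly one outgoing edge at each non-root vertex, and the poppable cycles are the directed cycles of this graph. The key \emph{diamond lemma} is that two distinct poppable cycles at the same stage are necessarily vertex-disjoint: a shared vertex would have a unique visible outgoing arrow, which would force the two cycles to agree throughout, so they must involve disjoint visible cards and popping them in either order yields the same subsequent configuration. A standard confluence argument on popping schedules then shows that if some schedule terminates, every schedule terminates, with the same multiset $\mathrm{Cyc}(\omega)$ and the same output tree $\mathrm{Tree}(\omega)$. Almost sure termination itself I would obtain by coupling with the loop-erasure Wilson algorithm of Lemma~\ref{lemm:classicalWilson_loopErasure} using the \emph{same} stacks: the sequence of loops erased when running loop-erased walks from the vertices in the prescribed order constitutes one admissible popping schedule, which terminates almost surely, whence by confluence so do all schedules.

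For the distributional identification I would exploit the independence of the cards (each arrow drawn with probability $W_{ij}/\sum_k W_{ik}$) together with the order invariance to write, for any spanning tree $T$ rooted at $\delta$ and any finite multiset of cycles $\mathcal{C}$,
\begin{equation*}
    \mathbb{P}[\mathrm{Tree}=T,\,\mathrm{Cyc}=\mathcal{C}] \;=\; \frac{W(T)}{\prod_{i\neq\delta}\sum_k W_{ik}}\cdot g(\mathcal{C}),
\end{equation*}
where $g(\mathcal{C})$ collects the product of transition probabilities along all popped cycles and is manifestly independent of $T$. The $W(T)$ factor comes from the ``last'' card revealed at each non-root vertex, whose arrow lies along the tree; the order invariance is what allows us to isolate these cards from those consumed by cycles. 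Summing over $\mathcal{C}$ gives $\mathbb{P}[\mathrm{Tree}=T]\propto W(T)$, which is exactly the weighted uniform spanning tree law.

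The main obstacle, and really the only step that requires genuine work, is to make the diamond lemma and the ensuing confluence property precise: this is where the seemingly miraculous order-independence of cycle popping is actually earned. Once that is in place, termination via the loop-erasure coupling and the factorization displayed above are essentially bookkeeping.
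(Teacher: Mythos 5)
Your argument is correct and is exactly the standard Propp--Wilson cycle-popping proof: vertex-disjointness of simultaneously poppable cycles (unique outgoing visible arrow), order-independence of the multiset of popped colored cycles and of the resulting tree, termination via the loop-erasure popping schedule, and the factorization of the stack law into a factor proportional to $W(T)$ times a cycle factor independent of $T$. The paper gives no proof of this lemma and simply cites~\cite[Lemma~4.2]{MR3616205}, where precisely this argument is carried out, so your route coincides with the one the paper relies on.
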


In particular, $\mathcal{T}$ is independent of the order in which the cycles are popped. It is important to note that the core argument~\cite[Lemma~4.2]{MR3616205} is deterministic: given any sample of stacks, the order in which the cycles are popped is irrelevant for the final spanning tree. Therefore, the previous loop erasure form of the standard Markovian Wilson's algorithm can be regarded as a convenient choice of popping cycles in a deterministic order, but any algorithm that explores the graph properly and pops every possible cycle is equivalent to the standard Markovian Wilson's algorithm. In Section~\ref{subse:WilsonWithOneExploration} we will introduce an appropriate form of Wilson's algorithm with a reinforcement mechanism, suitable to the purpose of this article.

\subsection{Poissonian loop soup}\label{subse:poissonian_loop_soup}
We now recall the standard Markovian construction of Poissonian loop soup, following the language of~\cite{MR4789605} and~\cite{MR2932978}.

Consider a finite graph $V$ with edge weights $(W_{ij})_{i,j\in V}$. A \emph{based loop} with base point $i$ is a trajectory starting and ending at the same vertex $i\in V$, of the form
\begin{equation*}
    l:i\xrightarrow{t_1} i_1\xrightarrow{t_2-t_1} i_2\xrightarrow{t_3-t_2}\dots \xrightarrow{t_{k-1}-t_{k-2}}i_{k-1}\xrightarrow{t_k-t_{k-1}} i\xrightarrow{t-t_k}
\end{equation*}
with $0<t_1<t_2<\dots<t_k<t$ and $i_k$ vertices of $V$, where the quantities on the arrows denote the time spent at the previous vertex before the jump to the next vertex. The last arrow means that we stay at $i$ for a time equal to $t-t_k$ without jumping and the trajectory ends. For example, $i\xrightarrow{t}$ is the trivial loop that makes no jump.

Given a Markov process with generator $L(i,j)$, we can define the bridge measure of the above trajectory with fixed $t>0$:
\begin{equation*}
    P_t^{(i,i)}(dl)=L(i,i_1)L(i_1,i_2)\dots L(i_{k-1},i) e^{-t_1L(i,i)-(t_2-t_1)L(i_1,i_1)-\dots-(t-t_k)L(i,i)}dt_1dt_2\dots dt_k.
\end{equation*}

The \emph{based loop measure} (associated to the generator $L$) is an infinite measure on the space of all admissible based loops, defined by
\begin{equation*}
    \mu(dl)=\sum_{i\in V}\int_{0}^{\infty}\frac{1}{t}P_t^{(i,i)}(dl)dt.
\end{equation*}
We can work on the equivalence class of based loops by forgetting the base point (via rerooting). The image of the loop measure $\mu$ under this equivalence relation is denoted $\mu^{\circ}$, and is called the (unbased) loop measure. For this paper, all computations are the same using either measure.

The Poissonian ensemble of loops $\mathcal{L}_{\alpha}$ for $\alpha>0$ is defined as the Poisson point process in the space of loops with intensity measure $\alpha\mu^{\circ}$. This means that, for any functional $F$ on the space of loops vanishing on loops with arbitrarily small length,
\begin{equation*}
    \mathbb{E}\left[\exp\left(i\sum_{l\in\mathcal{L}_{\alpha}}F(l)\right)\right]=\exp\left(\alpha\int\left(e^{iF(l)}-1\right)\mu^{\circ}(dl)\right)
\end{equation*}
where the integral on the right-hand side is over the space of all admissible unbased loops. Since the measures $\mu^{\circ}$ and $\mu$ coincide on loop functionals, equivalently one can use the intensity measure $\alpha\mu$ and the space of based loops in the above display.

The occupation time field of a (based or unbased) loop $l$ is
\begin{equation*}
    \widehat{l_i}=\int_{0}^{t}\mathbf{1}_{l(s)=i}ds,\quad \forall i\in V,
\end{equation*}
and the occupation time field of a collection of random loops $\mathcal{L}$ is $\widehat{\mathcal{L}}=\sum_{l\in\mathcal{L}}\widehat{l}$. In particular, the occupation field of the Poissonian loop soup  $\mathcal{L}_1$ with $\alpha=1$ will be denoted $\widehat{\mathcal{L}_1}$.

\subsection{Loop soup and Wilson's algorithm}
In the standard Markovian Wilson's algorithm, one can look at the resulting uniform spanning tree as well as the erased loops during the algorithm. There are deep symmetries involving these two random processes similar to the boson-fermion correspondence. Here we only collect some useful properties and refer to~\cite[Chapter~8]{MR4789605} for a detailed exposition.

Consider the standard Markovian Wilson's algorithms reviewed in Section~\ref{sec:ClassicalWilson}. The result that will be most relevant for us is the following identification~\cite[Corollary~8.1]{MR4789605}:

\begin{lemmA}[Occupation field of the random loops in the standard Markovian Wilson's algorithm]\label{lemm:OccupationFieldWilson_L1}
Consider a symmetric Markov jump process with generator $\widetilde{A}$ on the augmented graph $\widetilde{V}$ with killing at the root vertex $\delta$. The occupation time field defined by the random set of (based) erased loops during the standard Markovian Wilson's algorithm on $\widetilde{V}$ with root $\delta$ is independent of the random spanning tree and of the ordering of the vertices, and has the same distribution as the occupation time field $\widehat{\mathcal{L}_1}$ of the Poissonian loop soup on $V$ with generator $\widetilde{A}_{|V}$ of parameter $\alpha=1$.
\end{lemmA}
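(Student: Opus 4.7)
The plan is to pass through the cycle popping form of Wilson's algorithm (Lemma~\ref{lemm:classicalWilson_cyclePopping}) and match the law of the popped cycles with that of $\mathcal{L}_1$ at the level of joint densities. Because the order in which cycles are popped is irrelevant, both the spanning tree $\mathcal{T}$ and the multiset of popped cycles are deterministic functionals of the underlying stacks, and in particular the erased loops produced by the loop-erasure form of Wilson's algorithm coincide with the popped cycles up to choice of base point. Since the occupation field of a loop is invariant under rerooting, the independence statement and the equality in distribution claimed in the lemma will follow as soon as one identifies the joint law of (spanning tree, occupation field of popped cycles) with that of (uniform spanning tree, $\widehat{\mathcal{L}_1}$).

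Concretely, I would first realize the continuous-time picture by placing at each non-root vertex $i$ an i.i.d.\ stack of arrows drawn from $W_{ij}/w_{i}$ with $w_{i}=\sum_{k}W_{ik}$, and attach to each card an independent exponential holding time of rate $w_{i}$. This realizes the Markov jump process with generator $\widetilde{A}$ killed at $\delta$ as a deterministic function of the stacks. Next, I would compute the joint density, over the stacks, of a prescribed spanning tree $\mathcal{T}$ together with a prescribed list of popped cycles $l_{1},\dots,l_{n}$ equipped with holding times. By independence across cards, this density factorizes: each cycle $l$ visiting $i_{0}\to i_{1}\to\cdots\to i_{k-1}\to i_{0}$ with holding times $s_{1},\dots,s_{k}$ contributes $\prod_{m}\widetilde{A}(i_{m-1},i_{m})e^{-w_{i_{m-1}}s_{m}}ds_{m}$, which is exactly the integrand of $P_{t}^{(i_{0},i_{0})}(dl)$ appearing in Section~\ref{subse:poissonian_loop_soup}, while the ``top of stack'' cards used by $\mathcal{T}$ integrate over their holding times to give the edge weights $W(\mathcal{T})$. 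Summing over the orderings in which the cycles based at the same vertex could have been produced yields the symmetric factor built into the Poisson point process.

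The final step is to match this with the generating functional $\exp(\int(e^{iF}-1)\mu^{\circ}(dl))$ of $\mathcal{L}_{1}$. The normalization obtained from summing over all spanning trees equals $\det(-\widetilde{A}_{|V})$ by the matrix-tree theorem, and this is precisely the prefactor needed to identify the loop-soup exponential at $\alpha=1$; rerooting absorbs the $1/t$ (equivalently $1/k$) factor in $\mu$, while independence of the stacks immediately yields the independence of $\mathcal{T}$ from the popped cycles and from the ordering of the vertices. The main obstacle I anticipate is the bookkeeping of this $1/t$ weight: one has to check that the multiplicity with which a given discrete cycle arises from the different possible rootings (and orderings among simultaneously produced loops at the same vertex) conspires exactly with the symmetry factor of a Poisson point process of intensity $\mu$, and this is the reason the statement is specific to $\alpha=1$. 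Once this combinatorial identity is in place, the occupation field identification follows by integrating the holding times.
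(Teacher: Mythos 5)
First, a remark on the comparison itself: the paper does not prove Lemma~\ref{lemm:OccupationFieldWilson_L1}; it quotes it from Le~Jan's book \cite[Corollary~8.1]{MR4789605}, so your attempt can only be judged on its own merits. Judged that way, the central strategy has a genuine gap: you propose to ``match the law of the popped cycles with that of $\mathcal{L}_1$ at the level of joint densities'', but the two loop ensembles do \emph{not} have the same law, so no configuration-level density matching exists. Popped cycles are vertex-disjoint \emph{simple} cycles of the functional digraph of top cards, and the popped collection carries legality constraints and stack-level multiplicities, whereas the intensity $\mu$ charges arbitrary based loops; e.g.\ on a two-vertex edge, four crossings are realized in $\mathcal{L}_1$ either by two copies of the simple $2$-cycle or by one loop winding twice, while cycle popping only ever produces two simple cycles --- the individual configuration probabilities differ and only their aggregates (hence occupation fields) agree. (Also, erased loops of the chronological loop erasure are in general concatenations of several popped cycles, not popped cycles ``up to base point''.) So the hoped-for ``conspiracy'' of rootings and orderings with the Poisson symmetry factors cannot be arranged; the identity must be proved for a functional that forgets the loop decomposition, e.g.\ by comparing Laplace transforms of occupation fields (Propp--Wilson's $\sum_{\mathcal{C}}w_\lambda(\mathcal{C})=\det(I-Q_\lambda)^{-1}$ plus the matrix-tree theorem on the Wilson side against Le~Jan's determinant formula for $\mu$ on the soup side), or by Le~Jan's excursion/Poisson--Dirichlet route, which is exactly what the paper invokes in Section~\ref{subse:construction_of_general_reinforced_loop_soup_texorpdfstring_mathcal_l} to go beyond the occupation field.

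Second, your holding-time bookkeeping is off by precisely the trivial-loop part of $\widehat{\mathcal{L}_1}$, so the identity you would end up proving is false as stated. Writing $w_i=\sum_k\widetilde{W}_{ik}$, $Q_{ij}=\widetilde{W}_{ij}/w_i$ and $Q_\lambda$ for the matrix with entries $\widetilde{W}_{ij}/(w_i+\lambda_i)$: since you let the tree cards' holding times be integrated out into $W(\mathcal{T})$, your loop occupation field at $i$ consists of one exponential holding time per \emph{popped} card only, and its Laplace transform is $\det(I-Q)/\det(I-Q_\lambda)$, whereas $\widehat{\mathcal{L}_1}$ has Laplace transform $\det(-\widetilde{A}_{|V})/\det(-\widetilde{A}_{|V}+\Lambda)$, which carries the additional factor $\prod_i w_i/(w_i+\lambda_i)$ contributed by the trivial loops (Frullani integral); note also that the bridge density $P_t^{(i,i)}$ contains a final holding period at the base, so a popped cycle is \emph{not} ``exactly the integrand of $P_t^{(i,i)}(dl)$''. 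The simplest counterexample to your accounting is a single vertex joined to $\delta$: Wilson's algorithm erases no loop at all, yet $\widehat{\mathcal{L}_1}$ at that vertex is exponentially distributed. The lemma is true only under the convention (implicit in ``(based) erased loops'' and explicit in Le~Jan's construction) that each based loop also absorbs the holding time at its base upon return, so that the loop ensemble carries the entire occupation time of the walk and the spanning tree is a purely discrete object; with that convention the two determinant ratios coincide, and the independence from the tree and from the vertex ordering does follow, as you say, from the stack representation of Lemma~\ref{lemm:classicalWilson_cyclePopping}.
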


The standard Markovian Wilson's algorithm can be refined to give a complete reconstruction of the Poissonian loop soup $\mathcal{L}_1$ by applying a Poisson-Dirichlet decomposition of loops, see~\cite[Section~8.3]{MR4789605} for more information on this reconstruction. See also Section~\ref{subse:construction_of_general_reinforced_loop_soup_texorpdfstring_mathcal_l} for the construction of general Poissonian loop soup $\mathcal{L}_{\alpha}$ using Wilson's algorithms with any $\alpha>0$.

\subsection{Standard Markovian isomorphism theorems}\label{subse:standard Markovian_isomorphism_theorems}
We now recall a few standard Markovian isomorphism theorems, which are identities in law (i.e. exact relations on expectations) between the occupation time field and the (scalar) free field. A pedagogical reference to all the standard Markovian isomorphisms below is~\cite[Chapter~2]{MR2932978}.

\subsubsection{BFS-Dynkin isomorphism theorem}
Consider a standard Markov jump process $(Z_t)_{t\geq 0}$ of (symmetric) generator $\widetilde{A}$ starting at some vertex $a\in V$ on the augmented graph $\widetilde{V}$. Suppose that $(Z_t)_{t\geq 0}$ is killed at the root vertex $\delta$, denote by $\varrho$ the killing time, and let $\bm{S}=\bm{S}(\varrho)$ be the final local times (with $S_i=\int_{0}^{\varrho}\mathbf{1}_{\{Z_t=i\}}dt$).
\begin{theoA}[Standard Markovian BFS-Dynkin isomorphism theorem]\label{theoA:standard_BFSDynkin}
Let $\mathbb{E}^{\widetilde{A}}_{a}$ denote the expectation with respect to the process $Z$. For any smooth bounded function $g$ with rapid decay,
\begin{equation*}
    \mathbb{E}^{\widetilde{A}}_{a,b}\left[g(\bm{S}+\frac{1}{2}\bm{\phi}^2)\right]=\mathbb{E}\left[\phi_a\phi_b g(\frac{1}{2}\bm{\phi}^2)\right],
\end{equation*}
where $b$ is the vertex last visited by $Z$ before $\delta$, $\bm{\phi}$ is the Gaussian free field of generator $\widetilde{A}$ with pinning condition $\phi_{\delta}=0$, and $\mathbb{E}^{\widetilde{A}}_{a,b}[\bm{\cdot}]=\frac{1}{\widetilde{W}_{b\delta}}\mathbb{E}^{\widetilde{A}}_{a}[\bm{\cdot}\mathbf{1}_{\{Z_{\varrho^{-}}=b\}}]$.
\end{theoA}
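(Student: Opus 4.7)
The plan is to reduce the identity to a Laplace transform calculation. Both sides are expectations of functions of a random vector in $\mathbb{R}_+^V$ (the local time field $\bm{S}$ on the left, the squared Gaussian field $\tfrac{1}{2}\bm{\phi}^2$ on the right), and finite Borel measures on $\mathbb{R}_+^V$ are determined by their Laplace transforms. Hence it suffices to verify the identity on the separating family $g(x) = \exp(-\langle \bm{\lambda}, x\rangle)$, indexed by $\bm{\lambda} \in \mathbb{R}_+^V$, and then recover the statement for general smooth $g$ with rapid decay by Fourier/Laplace inversion.

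Under this substitution, using independence of $Z$ and $\bm{\phi}$, the left-hand side factorizes as the product of a local time Laplace transform and a Gaussian integral. For the local time part, I would invoke a Feynman--Kac formula: let $A$ denote the restriction of $\widetilde{A}$ to $V \times V$ and write $G_{\bm{\lambda}} = (-A + \mathrm{diag}(\bm{\lambda}))^{-1}$ for the Green's function of the process with additional killing rates $\bm{\lambda}$. Then one has
\begin{equation*}
    \mathbb{E}^{\widetilde{A}}_{a}\bigl[e^{-\langle \bm{\lambda}, \bm{S}\rangle}\mathbf{1}_{\{Z_{\varrho^-}=b\}}\bigr] = \widetilde{W}_{b\delta}\, G_{\bm{\lambda}}(a,b),
\end{equation*}
which follows by decomposing a trajectory from $a$ to $\delta$ into its portion on $V$ from $a$ to $b$ (integrated against the occupation density of the $\bm{\lambda}$-killed process) followed by a final jump to $\delta$ at rate $\widetilde{W}_{b\delta}$. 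Dividing by $\widetilde{W}_{b\delta}$ exactly absorbs the prefactor in the definition of $\mathbb{E}^{\widetilde{A}}_{a,b}$. For the Gaussian factor, $\bm{\phi}$ has covariance $(-A)^{-1}$ under the pinning $\phi_\delta = 0$, and a direct Gaussian integral gives
\begin{equation*}
    \mathbb{E}\bigl[e^{-\tfrac{1}{2}\langle \bm{\lambda}, \bm{\phi}^2\rangle}\bigr] = \left(\frac{\det(-A)}{\det(-A + \mathrm{diag}(\bm{\lambda}))}\right)^{1/2}.
\end{equation*}

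For the right-hand side, I would interpret the insertion $e^{-\tfrac{1}{2}\langle \bm{\lambda}, \bm{\phi}^2\rangle}$ as shifting the inverse covariance of the Gaussian measure from $-A$ to $-A + \mathrm{diag}(\bm{\lambda})$, so that the two-point function $\mathbb{E}[\phi_a \phi_b \,\bm{\cdot}\,]$ is reweighted to a Gaussian with covariance $G_{\bm{\lambda}}$, producing a factor of $G_{\bm{\lambda}}(a,b)$ together with the very same determinantal normalization ratio as above. Matching terms on the two sides, both equal $G_{\bm{\lambda}}(a,b)$ times the identical determinantal prefactor, so the Laplace transform identity holds, and the theorem follows.

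The main obstacle is the careful setup of the Feynman--Kac formula with conditioning on the terminal event $\{Z_{\varrho^-} = b\}$: one must track that the process exits $V$ precisely through the edge $(b,\delta)$, which is what produces the factor $\widetilde{W}_{b\delta}$ and justifies the normalization chosen in the definition of $\mathbb{E}^{\widetilde{A}}_{a,b}$. All remaining ingredients are standard linear algebra and Gaussian calculus, and the proof is essentially a bookkeeping exercise once this identification is made.
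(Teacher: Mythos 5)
Your proof is correct: the reduction to Laplace transforms $g(\bm{x})=e^{-\langle\bm{\lambda},\bm{x}\rangle}$, the Feynman--Kac/last-jump identity $\mathbb{E}^{\widetilde{A}}_{a}\bigl[e^{-\langle\bm{\lambda},\bm{S}\rangle}\mathbf{1}_{\{Z_{\varrho^-}=b\}}\bigr]=\widetilde{W}_{b\delta}\,G_{\bm{\lambda}}(a,b)$, and the Gaussian covariance/determinant computation constitute exactly the standard argument for this theorem, which the paper does not reprove but cites (Sznitman, Dynkin, Brydges--Fr\"ohlich--Spencer), and it is precisely the Laplace-transform form of the identity that the paper itself invokes in the proof of Lemma~\ref{lemm:ClassicalSusy_BFSD}. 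The only point worth stating explicitly is that the right-hand side carries the signed weight $\phi_a\phi_b$, so the final extension from exponentials to general $g$ uses uniqueness of Laplace transforms for finite signed measures on $\mathbb{R}_+^{V}$, which holds here.
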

See~\cite[Theorem~2.8]{MR2932978} and~\cite{MR902412,MR648362} for background and proof of this identity.

\subsubsection{Second generalized Ray-Knight isomorphism theorem}
Consider a standard Markov jump process $(Z_t)_{t\geq 0}$ of (symmetric) generator $A$ starting at some vertex $a\in V$ on the graph $V$ with local times $S_i=\int_{0}^{\varrho}\mathbf{1}_{\{Z_t=i\}}dt$. Denote by $\sigma(\gamma)=\inf\{t>0~;~S_a(t)>\gamma\}$ the first instant the local time at the starting point $a\in V$ exceeds $\gamma>0$, and $\bm{S}=\bm{S}(\sigma)$ the final local times.
\begin{theoA}[Standard Markovian second generalized Ray-Knight isomorphism theorem]\label{theoA:standard_RayKnight}
For any smooth bounded function $g$ with rapid decay,
\begin{equation*}
    \mathbb{E}^{A}_{a}\left[g(\bm{S}+\frac{1}{2}\bm{\phi}^2)\right]=\mathbb{E}\left[g\Big(\frac{1}{2}(\bm{\phi}+\sqrt{2\gamma})^2\Big)\right],
\end{equation*}
where $\bm{\phi}$ is the Gaussian free field with generator $A$ and pinning $\phi_{a}=0$.
\end{theoA}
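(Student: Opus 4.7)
The plan is to derive Theorem~\ref{theoA:standard_RayKnight} from the BFS-Dynkin isomorphism (Theorem~\ref{theoA:standard_BFSDynkin}) via an augmentation trick, a Gaussian decomposition exploiting the harmonicity of constants, and a Laplace-transform matching through exponential test functions.

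First I would augment $V$ by a single root vertex $\delta$ connected to $a$ by an edge of weight $\lambda > 0$; since the only edge to $\delta$ emanates from $a$, the terminal vertex $b$ in Theorem~\ref{theoA:standard_BFSDynkin} is forced to be $a$ with $\widetilde{W}_{a\delta} = \lambda$, so BFS-Dynkin collapses to $\tfrac{1}{\lambda}\mathbb{E}^{\widetilde{A}}_{a}[g(\bm{S}(\varrho) + \tfrac{1}{2}\widetilde{\bm{\phi}}^{2})] = \mathbb{E}[\widetilde{\phi}_{a}^{2}\,g(\tfrac{1}{2}\widetilde{\bm{\phi}}^{2})]$. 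Two complementary decouplings then kick in: on the trajectory side, the $\delta$-killing acts as an independent Poisson clock of rate $\lambda$ running only while at $a$, so $\varrho = \sigma(L)$ with $L = S_{a}(\varrho) \sim \mathrm{Exp}(\lambda)$ independent of the pre-killing path, while on the field side, harmonicity $A\mathbf{1} = 0$ (the graph Laplacian of a connected graph annihilates constants) produces the orthogonal Gaussian decomposition $\widetilde{\bm{\phi}} = \bm{\phi} + c\,\mathbf{1}$, where $\bm{\phi}$ is the pinned GFF of Theorem~\ref{theoA:standard_RayKnight} and $c = \widetilde{\phi}_{a} \sim \mathcal{N}(0, 1/\lambda)$ is independent of $\bm{\phi}$; the variance $\widetilde{G}_{a,a} = \mathbb{E}^{\widetilde{A}}_{a}[S_{a}(\varrho)] = 1/\lambda$ follows from the $\mathrm{Exp}(\lambda)$ law of the local time at $a$ before killing.

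Substituting these decouplings and testing against the exponential family $g(x) = e^{-\langle\chi, x\rangle}$, I would then invoke (i)~Kac's moment formula, $\mathbb{E}^{A}_{a}[e^{-\langle\chi,\bm{S}(\sigma(\gamma))\rangle}] = e^{-\gamma/G_{\chi}(a,a)}$ with $G_{\chi} = (A + M_{\chi})^{-1}$, and (ii)~explicit Gaussian integration of the pinned GFF. Both sides of the augmented BFS-Dynkin identity become tractable exponential/rational expressions in $\lambda$, $\gamma$ and $\chi$, and after the $\lambda$-dependence cancels, the content of Theorem~\ref{theoA:standard_RayKnight} on exponential test functions reduces to the algebraic identity
\[
\langle\chi, \mathbf{1}\rangle - \langle\chi', (A|_{V\setminus\{a\}} + M_{\chi'})^{-1}\chi'\rangle = \frac{1}{G_{\chi}(a,a)},
\]
where $\chi'$ denotes the restriction of $\chi$ to $V\setminus\{a\}$ and $M_{\chi'}$ the corresponding diagonal matrix. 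This is a short Schur-complement computation again exploiting $A\mathbf{1} = 0$; the density of exponential test functions then upgrades the identity to all smooth bounded $g$ with rapid decay.

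The delicate point is that a naive Abel-type inversion in $\lambda$ of the augmented BFS-Dynkin identity only yields an integrated Laplace identity in the two parameters $s$ (trajectory) and $c$ (field), which by itself underdetermines the fixed-$\gamma$ statement. Funneling everything through exponential test functions and Kac's formula sidesteps this difficulty by turning the matching into an explicit algebraic identity; the harmonicity of $\mathbf{1}$ intervenes twice, once to decouple the Gaussian and once to verify the Schur-complement identity, and is ultimately what selects the constant shift $\sqrt{2\gamma}$ (rather than a non-trivial harmonic extension) in Theorem~\ref{theoA:standard_RayKnight}.
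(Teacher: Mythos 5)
The paper does not prove Theorem~\ref{theoA:standard_RayKnight} at all: it is quoted as a background result with a citation to Eisenbaum--Kaspi--Marcus--Rosen--Shi, so there is no internal argument to compare against, and your proposal has to be judged on its own. Judged that way, it is correct, and it is essentially the classical Laplace-transform proof (as in Sznitman's lecture notes): the real content is (i) the identity $\mathbb{E}^{A}_{a}[e^{-\langle\chi,\bm{S}(\sigma(\gamma))\rangle}]=e^{-\gamma/G_{\chi}(a,a)}$ with $G_{\chi}=(A+M_{\chi})^{-1}$, which follows from the fact that the total local time at $a$ of the $\chi$-killed process is exponential with mean $G_{\chi}(a,a)$ (calling this ``Kac's moment formula'' is loose, but the formula is right); (ii) the explicit Gaussian computation for the $a$-pinned field, which shows that the shift by $\sqrt{2\gamma}$ contributes exactly the factor $e^{-\gamma\kappa}$ with $\kappa=\langle\chi,\mathbf{1}\rangle-\langle\chi',(A|_{V\setminus\{a\}}+M_{\chi'})^{-1}\chi'\rangle$; (iii) the algebraic identity $\kappa=1/G_{\chi}(a,a)$, which indeed reduces, via the Schur complement of $A+M_{\chi}$ on $V\setminus\{a\}$ and the two uses of $A\mathbf{1}=0$ (namely $A_{aa}=-A_{a,V\setminus\{a\}}\mathbf{1}$ and $A_{V\setminus\{a\},a}=-A|_{V\setminus\{a\}}\mathbf{1}$), to the symmetry statement $\langle\chi',(A|_{V\setminus\{a\}}+M_{\chi'})^{-1}A_{V\setminus\{a\},a}\rangle=A_{a,V\setminus\{a\}}(A|_{V\setminus\{a\}}+M_{\chi'})^{-1}\chi'$ --- I checked this and it holds; and (iv) density of exponential test functions, which upgrades the identity of Laplace transforms to all smooth bounded $g$ with rapid decay. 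Your preliminary detour through the augmented BFS-Dynkin identity (root $\delta$ attached only to $a$ with weight $\lambda$, killing time $=\sigma(\mathrm{Exp}(\lambda))$, decomposition $\widetilde{\bm{\phi}}=\bm{\phi}+\widetilde{\phi}_{a}\mathbf{1}$ with $\widetilde{\phi}_{a}\sim\mathcal{N}(0,1/\lambda)$) is accurate --- both decouplings are exactly right, and on exponential test functions the augmented identity is equivalent to the same relation $\kappa=1/G_{\chi}(a,a)$ after the $\lambda$-dependence cancels --- but once you verify that relation directly by the Schur complement, the BFS-Dynkin step is logically dispensable; it functions as motivation and as a consistency check rather than as an ingredient of the proof, which is also what your own caveat about the non-invertibility of the $\lambda$-mixture amounts to. In the context of this paper the direct route is arguably the more useful one to record, since it parallels the supersymmetric-free-field upgrades the authors perform for the other isomorphisms.
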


We refer to~\cite{MR1813843} for background and proof of this identity. See also~\cite{MR3520013} for more connections to Vertex Reinforced Jump Process (reviewed in Section~\ref{subse:vertex_reinforced_jump_process}).

\subsubsection{Eisenbaum's isomorphism theorem}
Consider a standard Markov jump process $(Z_t)_{t\geq 0}$ of (symmetric) generator $\widetilde{A}$ starting at some vertex $a\in V$ on the augmented graph $\widetilde{V}$. Suppose that $(Z_t)_{t\geq 0}$ is killed at the root vertex $\delta$, denote by $\varrho$ the killing time, and let $\bm{S}=\bm{S}(\varrho)$ be the final local times (with $S_i=\int_{0}^{\varrho}\mathbf{1}_{\{Z_t=i\}}dt$).
\begin{theoA}[Standard Markovian Eisenbaum's isomorphism theorem]\label{theoA:standard_Eisenbaum}
For any smooth bounded function $g$ with rapid decay and any $s\in\mathbb{R}$,
\begin{equation*}
    \mathbb{E}^{A}_{a}\left[g\Big(\bm{S}+\frac{1}{2}(\bm{\phi}+s)^2\Big)\right]=\mathbb{E}\left[\frac{\phi_a+s}{\phi_\delta+s} g\Big(\frac{1}{2}(\bm{\phi}+s)^2\Big)\right],
\end{equation*}
where $\bm{\phi}$ is the Gaussian free field with generator $\widetilde{A}$ and pinning $\phi_{\delta}=0$.
\end{theoA}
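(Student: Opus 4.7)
The proof strategy is to derive Eisenbaum's identity from the BFS--Dynkin isomorphism (Theorem~\ref{theoA:standard_BFSDynkin}) via a Cameron--Martin translation of the Gaussian free field. The $s$-shift on the right-hand side of Eisenbaum's formula corresponds precisely to translating $\bm{\phi}\mapsto\bm{\phi}+s\mathbf{1}_{V}$, and its Radon--Nikodym factor $D_{s}(\bm{\phi})=\exp(s\mathbf{1}_{V}^{T}\widetilde{A}\bm{\phi}-\tfrac{s^{2}}{2}\mathbf{1}_{V}^{T}\widetilde{A}\mathbf{1}_{V})$ encodes exactly the harmonic data $(\widetilde{A}\mathbf{1}_{V})_{j}=\widetilde{W}_{j\delta}$ at the root, which is also what appears when one removes the endpoint conditioning $\{Z_{\varrho^{-}}=b\}$ from BFS--Dynkin by summing over $b$.

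Concretely, I would proceed in three steps. First, sum the BFS--Dynkin identity over $b\in V$ weighted by $\widetilde{W}_{b\delta}$ to remove the endpoint conditioning, yielding
\begin{equation*}
\mathbb{E}^{\widetilde{A}}_{a}\bigl[g(\bm{S}+\tfrac{1}{2}\bm{\phi}^{2})\bigr]=\mathbb{E}\bigl[\phi_{a}\,(\mathbf{1}_{V}^{T}\widetilde{A}\bm{\phi})\,g(\tfrac{1}{2}\bm{\phi}^{2})\bigr].
\end{equation*}
Second, apply the Cameron--Martin formula $\mathbb{E}[F(\bm{\phi}+s\mathbf{1}_{V})]=\mathbb{E}[F(\bm{\phi})D_{s}(\bm{\phi})]$ to both sides of the target identity; using the pinning $\phi_{\delta}=0$ so that $\phi_{\delta}+s=s$, the LHS becomes $\mathbb{E}^{\widetilde{A}}_{a}\mathbb{E}[g(\bm{S}+\tfrac{1}{2}\bm{\phi}^{2})D_{s}(\bm{\phi})]$ and the RHS becomes $\mathbb{E}[\tfrac{\phi_{a}+s}{s}g(\tfrac{1}{2}\bm{\phi}^{2})D_{s}(\bm{\phi})]$, reducing Eisenbaum's identity to a comparison of two expectations with respect to the centered GFF. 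Third, expand $D_{s}(\bm{\phi})$ as a power series in $s$ (equivalently, as Hermite polynomials in the scalar Gaussian $\mathbf{1}_{V}^{T}\widetilde{A}\bm{\phi}$), apply generalized BFS--Dynkin identities with multiple $\phi$-insertions at each order, and resum. The order $s^{0}$ match is already the summed Dynkin identity above, and the odd/even parity structure of $D_{s}(\bm{\phi})$ under $\bm{\phi}\mapsto-\bm{\phi}$ is precisely the one that pairs Markovian contributions with the factor $\phi_{a}+s$ on the right.

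The hard part will be the combinatorial bookkeeping of this resummation: the order $s^{k}$ contribution requires a BFS--Dynkin identity with $k$ extra $\phi$-insertions, producing a Wick-like sum over pairings that must match the moment generating structure of the Markov process under iterated excursions from the root. A cleaner alternative, which I would adopt in practice, is to verify the identity on exponential test functions $g(x)=\exp(-\langle\lambda,x\rangle)$ for $\lambda$ small and nonnegative: the LHS factorizes into a Feynman--Kac resolvent $\mathbb{E}^{\widetilde{A}}_{a}[e^{-\langle\lambda,\bm{S}\rangle}]=\sum_{b}\bigl((\widetilde{A}+\mathrm{diag}(\lambda))^{-1}\bigr)_{ab}\widetilde{W}_{b\delta}$ times a shifted Gaussian integral, the RHS is the analogous Gaussian integral weighted by $\phi_{a}+s$, and the equality reduces to a direct linear algebra computation involving the resolvent $(\widetilde{A}+\mathrm{diag}(\lambda))^{-1}$ and the harmonic measure from $a$ to $\delta$; the general case then follows by density of exponentials among test functions with rapid decay.
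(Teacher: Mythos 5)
The paper does not actually prove Theorem~\ref{theoA:standard_Eisenbaum}: it is stated as background, with the proof deferred to~\cite[Theorem~2.10]{MR2932978} and~\cite{MR1459468}. So the only meaningful comparison is with the classical argument in those references, and the route you say you would ``adopt in practice'' --- verifying the identity on exponential test functions and extending by density --- is essentially that argument, and it does work. For $g(\cdot)=e^{-\langle\bm{\lambda},\cdot\rangle}$ the left-hand side factorizes as $\sum_{b\in V}\big((\widetilde{A}+\mathrm{diag}(\bm{\lambda}))^{-1}\big)_{ab}\widetilde{W}_{b\delta}$ times the Gaussian factor $\mathbb{E}\big[e^{-\frac{1}{2}\langle\bm{\lambda},(\bm{\phi}+s)^2\rangle}\big]$, while on the right-hand side the tilted field is Gaussian with covariance $(\widetilde{A}+\mathrm{diag}(\bm{\lambda}))^{-1}$ and mean $-s(\widetilde{A}+\mathrm{diag}(\bm{\lambda}))^{-1}\bm{\lambda}$, so (using $\phi_\delta+s=s$) the equality of the two sides reduces to $(\widetilde{A}+\mathrm{diag}(\bm{\lambda}))^{-1}\widetilde{A}\mathbf{1}_V=\mathbf{1}_V-(\widetilde{A}+\mathrm{diag}(\bm{\lambda}))^{-1}\bm{\lambda}$, which holds precisely because $(\widetilde{A}\mathbf{1}_V)_b=\widetilde{W}_{b\delta}$, as you observe; uniqueness of Laplace transforms of finite signed measures then extends the identity to all $g$ as in the statement. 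This is a complete and correct proof outline, of the same type as in the cited references.

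Your primary route (Cameron--Martin plus resummation), however, is flawed as written. Shifting $\bm{\phi}\mapsto\bm{\phi}+s\mathbf{1}_V$ on the right-hand side must also shift the prefactor, so the reduced identity should read $\mathbb{E}^{\widetilde{A}}_a\mathbb{E}\big[g(\bm{S}+\tfrac{1}{2}\bm{\phi}^2)D_s(\bm{\phi})\big]=\tfrac{1}{s}\mathbb{E}\big[\phi_a\,g(\tfrac{1}{2}\bm{\phi}^2)D_s(\bm{\phi})\big]$, not $\mathbb{E}\big[\tfrac{\phi_a+s}{s}g(\tfrac{1}{2}\bm{\phi}^2)D_s(\bm{\phi})\big]$; with your version the two sides already disagree at order $s^0$ by the extra term $\mathbb{E}\big[g(\tfrac{1}{2}\bm{\phi}^2)\big]$, as comparison with your own summed Dynkin identity from Step 1 shows. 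Even after this correction, the order-$s^{2m}$ matching requires multi-insertion BFS--Dynkin identities and a Wick-type resummation that you explicitly leave undone, so that thread is a sketch rather than a proof; the proposal stands on the exponential-test-function argument alone.
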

We refer to~\cite[Thoerem~2.10]{MR2932978} and~\cite{MR1459468} for background and proof of this identity.

\subsubsection{Standard Markovian loop soup isomorphism}
Isomorphism theorems can be extended to the standard Markovian loop soup, which is somewhat already rooted in the original work of Dynkin~\cite{MR902412} and developed by Le Jan~\cite{MR2675000}. We will focus on the case with $\alpha=1$, although this identity can be generalized to any half integer-valued $\alpha$.

\begin{theoA}[Dynkin isomorphism for the loop soup]\label{lemm:Classical_Loop_Dynkin}
The occupation field of $\mathcal{L}_1$ of a loop soup with a symmetric generator $A$ has the same distribution as the average of the squares of two independent Gaussian free fields of the same generator $A$. In other words, for any smooth bounded function $g$ with rapid decay,
\begin{equation*}
    \mathbb{E}^{\text{loop},A}[g(\widehat{L_1})]=\int_{(\mathbb{R}^{|V|})^{2}}g\left(\frac{1}{2}(\bm{x}^2+\bm{y}^2)\right)e^{-\frac{1}{2}(\bm{x}A\bm{x}+\bm{y}A\bm{y})}\det(A)d\bm{x}d\bm{y}.
\end{equation*}
\end{theoA}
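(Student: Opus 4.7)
The plan is to establish the equality in distribution by computing and matching Laplace transforms of the two sides. Both $\widehat{\mathcal{L}_1}$ and $\tfrac{1}{2}(\bm x^2+\bm y^2)$ take values in $\mathbb{R}_{\geq 0}^{V}$, so it suffices to check that $\mathbb{E}[e^{-\langle\chi,\cdot\rangle}]$ agrees for every non-negative $\chi\in\mathbb{R}_{\geq 0}^{V}$; throughout, write $M_\chi$ for the diagonal operator on $\mathbb{R}^V$ with entries $\chi_i$.

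For the loop-soup side, the Poisson structure of $\mathcal{L}_1$ together with Campbell's formula gives
\begin{equation*}
\mathbb{E}^{\text{loop},A}\left[e^{-\langle\chi,\widehat{\mathcal{L}_1}\rangle}\right]=\exp\left(-\int\left(1-e^{-\langle\chi,\hat\ell\rangle}\right)\mu(d\ell)\right),
\end{equation*}
using that $\mu^{\circ}$ and $\mu$ agree on loop functionals such as the occupation time. The key step is then to evaluate this integral as a log-determinant: by a Feynman--Kac computation on the bridge measure,
\begin{equation*}
\int e^{-\langle\chi,\hat\ell\rangle}\,P_t^{(i,i)}(d\ell)=\bigl(e^{-t(A+M_\chi)}\bigr)_{ii}.
\end{equation*}
Substituting into the definition of $\mu$, summing over $i\in V$, and applying the Frullani-type identity
\begin{equation*}
\int_0^\infty\frac{1}{t}\operatorname{tr}\bigl(e^{-tA}-e^{-t(A+M_\chi)}\bigr)\,dt=\log\det(A+M_\chi)-\log\det(A),
\end{equation*}
one concludes
\begin{equation*}
\mathbb{E}^{\text{loop},A}\left[e^{-\langle\chi,\widehat{\mathcal{L}_1}\rangle}\right]=\frac{\det(A)}{\det(A+M_\chi)}.
\end{equation*}

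For the Gaussian side, a direct quadratic-form computation shows that a single GFF $\bm\phi$ with generator $A$ satisfies $\mathbb{E}[e^{-\frac{1}{2}\langle\chi,\bm\phi^2\rangle}]=\det(A)^{1/2}/\det(A+M_\chi)^{1/2}$, and taking the square for two independent copies $\bm x,\bm y$ yields exactly $\det(A)/\det(A+M_\chi)$. Matching the two Laplace transforms and invoking uniqueness of Laplace transforms on the positive orthant yields the stated identity for arbitrary bounded smooth $g$ with rapid decay.

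The main obstacle is the log-determinant identity above: the exchange between the trace and the $dt/t$ integral, together with the Frullani manipulation, requires $A$ to be (strictly) positive definite so that $\operatorname{tr}(e^{-tA})$ decays at infinity and $\operatorname{tr}(e^{-tA}-e^{-t(A+M_\chi)})$ remains integrable near $t=0$. This positivity is built into the Poissonian loop soup setting through the presence of killing (equivalently, through the augmentation by $\delta$), which is also what ensures that both $\mu$ and the Gaussian density in the statement are well defined.
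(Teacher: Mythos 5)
Your proposal is correct, and it is essentially the canonical proof: the paper does not prove this statement itself but quotes it (as Theorem~\ref{lemm:Classical_Loop_Dynkin}) from Le Jan and from Sznitman, whose argument is exactly your computation of the Laplace transform via the Poisson/Campbell formula, the Feynman--Kac identity $\int e^{-\langle\chi,\hat\ell\rangle}P_t^{(i,i)}(d\ell)=\bigl(e^{-t(A+M_\chi)}\bigr)_{ii}$, and the Frullani-type log-determinant identity, matched against the Gaussian computation $\det(A)/\det(A+M_\chi)$. Your closing remark is also the right one: strict positive definiteness of $A$ (coming from the killing at $\delta$) is what makes the $dt/t$ integral convergent at both ends and the Gaussian side well defined.
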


The proof of this theorem is due to Le Jan~\cite{MR2675000} can be found in~\cite[Theorem~4.5]{MR2932978} or~\cite[Theorem~6.1]{MR4789605}. In particular, in combination with Lemma~\ref{lemm:OccupationFieldWilson_L1}, this identity also applies to the occupation time field of the erased loops during the standard Markovian Wilson's algorithm on the augmented graph $\widetilde{V}$ with root $\delta$.

\subsection{Supersymmetric free field}\label{subse:supersymmetric_free_field}
We now recall some basic notions about supersymmetric field theories, following~\cite{wegner2016supermathematics} and~\cite{efetov1999supersymmetry}. We will be mainly focusing on the case of the so-called $(2,2)$-supersymmetric free field, and drop the specification $(2,2)$ when there is no ambiguity.

First, let us recall some basics of Grassmann calculus. Consider bosonic variables $(x,y,z,\dots)$ and fermionic variables $(\xi,\eta,\dots)$. Recall that when a variable is fermionic, it anticommutes with other fermionic variables and commutes with bosonic variables. For example,
\begin{equation*}
    \xi\eta=-\eta\xi,\quad \xi^2=\eta^2=0,\quad \xi x=x \xi,\quad \dots
\end{equation*}
A bosonic variable commutes with all variables. A bosonic variable is not necessarily real, e.g. $\xi\eta$ is bosonic. In general, one can deal with fermionic variables by representing a function with its formal (infinite) Taylor series in the fermionic variables: this series is a polynomial in the fermionic variables by anticommutation. For a systematic treatment of elementary Grassmann calculus towards applications in statistical physics, one can consult~\cite{wegner2016supermathematics,efetov1999supersymmetry}.

It is a standard fact that for Grassmann variables, derivation and integration are essentially the same operation, modulo a sign convention. We adopt the following convention throughout this article, that integration with the differential to the left coincides with left differentiation, e.g.
\begin{equation*}
    \int d\xi f(\xi)=\frac{\partial}{\partial\xi}f(\xi).
\end{equation*}
This is the convention used in~\cite{MR4255180,MR2728731}: see~\cite[Chapter~3]{wegner2016supermathematics} for a quick reminder.

Associate each vertex $i$ in the graph $V$ with a four vector $X_i=(x_i,y_i,\xi_i,\eta_i)$, with $x,y$ real variables and $\xi,\eta$ fermionic variables. We usually denote this by $X_i\in\mathbb{R}^{2|2}$, the superscript $2|2$ refers to the numbers of each type of variable. To define the supersymmetric free field with a language that is familiar to probabilists, one can think of the variables $X_i$ as spins, equipped with the (symmetric) inner product
\begin{equation}\label{eq:InnerProduct_SusyFreeField}
    X_i\cdot X_j=x_ix_j+y_iy_j+\xi_i\eta_j+\xi_j\eta_i,
\end{equation}
where the ordering of $\xi,\eta$ is important due to the anticommutation of fermionic variables. We sometimes drop the $\cdot$ and write this inner product as $X_iX_j$ when there is no ambiguity. An \emph{energy} term (equivalently, \emph{action functional} or \emph{Gibbs measure}) associated to each configuration $(X_i)_{i\in V}$ of this system induces (formally) a randomized spin configuration. The \emph{Berezin integral form} associated to the space $\mathbb{R}^{2|2}$ is
\begin{equation*}
    D\bm{X}=\prod_{i\in V}\frac{1}{2\pi}dx_idy_id\xi_id\eta_i,
\end{equation*}
which should be understood as the formal analog of the Lebesgue measure for $\mathbb{R}^2$.

At this stage, the setup is purely formal, and performing Grassmann integrations does not yield in general quantities that have probabilistic interpretations. However, special choices of the energy term yield interesting field theory for probabilists. A prime example is the supersymmetric free field, with the energy term defined as
\begin{equation*}
    \frac{1}{2}\bm{X}\Delta_W\bm{X}=\frac{1}{2}\sum_{i,j\in V}X_i(\Delta_W)_{ij}X_j=\sum_{(ij)\in E(V)}(X_i-X_j)W_{ij}(X_i-X_j).
\end{equation*}
As with the usual scalar free field, one needs some extra \emph{pinning} condition to make the integral $\int_{(\mathbb{R}^{2|2})^{|V|}}e^{-\frac{1}{2}\bm{X}\Delta_W\bm{X}}D\bm{X}$ converge.\footnote{The pinning condition on the augmented graph $\widetilde{V}$ is equivalent to a boundary condition on the graph $V$ with boundary $\{\delta\}$: we will use the pinning terminology in the sequel.} Therefore, consider the augmented graph $\widetilde{V}$ with extra vertex $\delta$ and its associated extra edge weights $(W_{i\delta})_{i\in V}$, with at least one positive $W_{i\delta}>0$ to preserve the connectedness of the graph. We usually choose the pinning condition with $\widetilde{X}_\delta=(0,0,0,0)=0$, and it is a special feature of the supersymmetric field theory (see Lemma~\ref{lemm:Parisi-Sourlas}) that the formal partition function is constant equal to $1$, i.e.
\begin{equation}\label{eq:TrivialParitionFunction}
    \int_{(\mathbb{R}^{2|2})^{|V|}}\mathbf{1}_{\{\widetilde{\bm{X}}_\delta=0\}}e^{-\frac{1}{2}\widetilde{\bm{X}}\Delta_{\widetilde{W}}\widetilde{\bm{X}}}D\widetilde{\bm{X}}=1.
\end{equation}
A systematic way to prove this is via the localization formula, see Corollary~\ref{coro:TrivialnessPartitionFunction} below.

The triviality of the partition function implies that there is no difference between the un-normalized and normalized expectations for the supersymmetric free field, and we denote them by $\llbracket\cdot\rrbracket_{\widetilde{W},\widetilde{X}_\delta=0}$. More specifically, for any smooth bounded function $F$ with rapid decay,
\begin{equation}\label{eq:susyFreeExpectation}
    \llbracket F(\widetilde{\bm{X}})\rrbracket_{\widetilde{W},\widetilde{X}_\delta=0}=\int_{(\mathbb{R}^{2|2})^{|V|}}F(\widetilde{\bm{X}})\mathbf{1}_{\{\widetilde{\bm{X}}_\delta=0\}}e^{-\frac{1}{2}\widetilde{\bm{X}}\Delta_{\widetilde{W}}\widetilde{\bm{X}}}D\widetilde{\bm{X}}
\end{equation}
is called the supersymmetric free field expectation with pinning at the root vertex (or the boundary) $\delta$. When all the edge weights $(W_{i\delta})_{i\in V}$ connecting to the root $\delta$ are constant equal to $h>0$, we also call this the supersymmetric free field expectation on $V$ with mass $h>0$.

\subsection{Supersymmetric hyperbolic sigma model}\label{subse:supersymmetric_hyperbolic_sigma_model}
We briefly recall next the definition of the $\text{H}^{2|2}$-model following~\cite{MR1134935,MR2728731} and its connections to the Vertex Reinforced Jump Process.

Consider a superfield $v_i=(x_i,y_i,z_i,\xi_i,\eta_i)\in\mathbb{R}^{3|2}$ defined on vertices $i\in V$, where the first three coordinates $(x_i,y_i,z_i)$ are real variables and the last two coordinates $(\xi_i,\eta_i)$ are fermionic variables. The (symmetric) inner product on this space is defined as
\begin{equation*}
    v_i\cdot v_j=x_ix_j+y_iy_j-z_iz_j+\xi_i\eta_j+\xi_j\eta_i.
\end{equation*}
This inner product is sometimes denoted by $v_iv_j$ when there is no ambiguity. We impose that this superfield $v_i$ lives on the supersymmetric hyperbolic space $\text{H}^{2|2}$ defined by the constraint that
\begin{equation*}
    \forall i\in V,\quad v_i^2=x_i^2+y_i^2-z_i^2+2\xi_i\eta_i=-1.
\end{equation*}
Under this constraint, by Taylor expansion of the square root at $1+x_i^2+y_i^2$ in the fermionic variables (see~\cite[Section~2]{MR2728731} if one is not familiar with this standard procedure in Grassmann calculus), one can express $z_i$ as a function of the other coordinates $(x_i,y_i,\xi_i,\eta_i)$. The convention is to choose the ``positive'' branch, namely
\begin{equation*}
    z_i=\sqrt{1+x_i^2+y_i^2}+\frac{\xi_i\eta_i}{\sqrt{1+x_i^2+y_i^2}}.
\end{equation*}

The origin or the zero-vector $0\in\text{H}^{2|2}$ is then defined to be $0=(0,0,1,0,0)\in\text{H}^{2|2}$. The terminology ``hyperbolic'' refers to the fact that if we forget about the fermionic variables, the constraint $x^2+y^2-z^2=-1$ is that of a standard hyperbolic model, and this appellation is best justified when one switches to the so-called horospherical coordinates using~\cite[Appendix~B]{MR2728731}. However for us and as pointed out in~\cite{MR4255180}, all our results are coordinate-free, and the horospherical coordinates will not be used in this paper for simplicity. We refer to~\cite{MR2728731} for discussion on the hyperbolic nature of the $\text{H}^{2|2}$-model, and continue with $\mathbb{R}^{3|2}$ coordinates.

Similarly to the supersymmetric free field theory above, we define the $\text{H}^{2|2}$ supersymmetric hyperbolic sigma model by defining its energy term (i.e. action functional) as
\begin{equation*}
    \frac{1}{2}\bm{v}\Delta_W\bm{v}=\sum_{i,j\in V}v_i (\Delta_{W})_{ij} v_j=-\sum_{(ij)\in E(V)}W_{ij}(x_ix_j+y_iy_j-z_iz_j+\xi_i\eta_j+\xi_j\eta_i+1),
\end{equation*}
and the Berezin integral form on the $\text{H}^{2|2}$-model is
\begin{equation*}
    D\mu(\bm{v})=\prod_{i\in V}\frac{1}{2\pi}\frac{dx_idy_i}{z_i}d\xi_id\eta_i.
\end{equation*}
Due to the non-compactness of the space $\text{H}^{2|2}$ equipped with the above measure, we renormalize the integral $\int_{(\text{H}^{2|2})^{|V|}}e^{-\frac{1}{2}\bm{v}\Delta_W\bm{v}}D\mu(\bm{v})$ by adding a pinning at an extra vertex $\delta$ on the augmented graph $\widetilde{V}$. The localization formula of supersymmetric field theory yields that the partition function with pinning at $\delta$ is again $1$ (see Corollary~\ref{coro:TrivialnessPartitionFunction}):
\begin{equation}\label{eq:TrivialPartitionFunction_H22}
    \int_{(\text{H}^{2|2})^{|V|}}\mathbf{1}_{\{\widetilde{v}_\delta=0\}}e^{-\frac{1}{2}\widetilde{\bm{v}}\Delta_{\widetilde{W}}\widetilde{\bm{v}}}D\mu(\widetilde{\bm{v}})=1,
\end{equation}
where $\widetilde{\bm{v}}=(v_i)_{i\in\widetilde{V}}$ is the augmented superfield on $\widetilde{V}=V\cup\{\delta\}$.

More generally, we denote by $\langle\cdot\rangle_{\widetilde{W},v_{\delta}=0}$ the formal expectation with respect to the above defined supersymmetric energy term: for smooth bounded functions $F$ with rapid decay,
\begin{equation}\label{eq:H22_expectation}
    \langle F(\widetilde{\bm{v}})\rangle_{\widetilde{W},v_{\delta}=0}=\int_{(\text{H}^{2|2})^{|V|}}F(\widetilde{\bm{v}})\mathbf{1}_{\{\widetilde{v}_\delta=0\}}e^{-\frac{1}{2}\widetilde{\bm{v}}\Delta_{\widetilde{W}}\widetilde{\bm{v}}}D\mu(\widetilde{\bm{v}}).
\end{equation}

\begin{rema}[``Equality in law'']\label{rema:equal_in_law}
We will often use some abuse of language by analogy to statistical physics. For example, the term ``expectation'' above is really just a (supersymmetric) integral, and one can realize the $\text{H}^{2|2}$-model via the supersymmetric free field using a ``change of measure''. All these terminologies should be understood in terms of the values of supersymmetric integrals against test functions: especially we will say two superfields are ``equal in law'' if for any smooth bounded function $F$ with rapid decay, the expectations obtained by supersymmetric integration in the form of~\eqref{eq:susyFreeExpectation} or~\eqref{eq:H22_expectation} of $F$ in these two superfields are equal.
\end{rema}

\subsection{Vertex Reinforced Jump Process}\label{subse:vertex_reinforced_jump_process}
We now introduce a random process with a reinforcement mechanism which is central to this paper: the Vertex Reinforced Jump Process. This process was already introduced by~\cite{MR1900324}, and has deep connections to the another random process with reinforcement called the edge reinforced random walk~\cite{MR1030727}. This connection was at the heart of the modern proofs of the famous Coppersmith-Diaconis' magic formula~\cite{coppersmith1987random}, and we recall some of the basic elements that are useful for this article.

Consider a finite connected graph $V$ with edge weights $W$. At time $s=0$, assign to each vertex $i\in V$ an initial positive \emph{local time} $\vartheta_i>0$. Specify also a vertex $i_0\in V$ (we stress that this is not necessarily the root vertex), and start a continuous time jump process $(Y_s)_{s\geq 0}$ starting from $i_0$, i.e. $Y_0=i_0$. The process $Y$ jumps from vertex $i$ to another vertex $j$ at time $s$ with rate $W_{ij}L_j(s)$, where $L_j(s)$ is the \emph{accumulated local time} (or simply local time) of the process $Y$ at time $s$, i.e.
\begin{equation*}
    L_j(s)=\vartheta_j+\int_{0}^{s}\mathbf{1}_{\{Y_\tau=j\}}d\tau.
\end{equation*}

This process is naturally not Markov, since the jump rate depends on the past. Most of the modern understanding of this process is achieved with the following time change~\cite{MR3420510,MR3531700}: consider the time scale
\begin{equation*}
    t=D(s)=\sum_{i\in V}(L_i(s)^2-\vartheta_i^2),
\end{equation*}
and consider the time-changed process $(Z_t)_{t\geq 0}$ defined by
\begin{equation*}
    Z_t=Y_{D^{-1}(t)}
\end{equation*}
which has its own local times defined as
\begin{equation}\label{eq:changed_local_times}
    S_i(t)=\int_{0}^{t}\mathbf{1}_{\{Z_\tau=i\}}d\tau=L_i(D^{-1}(t))^2-\vartheta_i^2.
\end{equation}
for each vertex $i\in V$.

We will see below in Section~\ref{subse:connections_between_vertex_reinforced_jump_processes_and_the_supersymmetric_hyperbolic_sigma_model} that the time-changed process $(Z_t)_{t\geq 0}$ acquires the so-called partial exchangeability property~\cite{MR786142}, thus is a mixture of standard Markovian jump processes in some random environment. This is an important philosophy throughout this article.

\subsection{Connections between Vertex Reinforced Jump Processes and the supersymmetric hyperbolic sigma model}\label{subse:connections_between_vertex_reinforced_jump_processes_and_the_supersymmetric_hyperbolic_sigma_model}
The $\text{H}^{2|2}$-model gains an extra probabilistic interpretation when looking at some marginal laws of the bosonic variables. The connection was unveiled by Sabot-Tarrès~\cite{MR3420510}, and we only recall here some essential results that we use in this article.

\begin{lemmA}[Mixture of Markov jump processes]\label{lemm:mixutre_of_markov_jump_processes}
Recall the time-changed process $(Z_t)_{t\geq 0}$ in Section~\ref{subse:vertex_reinforced_jump_process} starting from the vertex $i_0\in V$ with initial local times $\bm{\vartheta}$. It can be sampled in the following way. First, sample the environment $\bm{u}=\{u_i\in \mathbb{R}, u_{i_0}=0\}_{i\in V\setminus\{i_0\}}$ according to the following probability density function on $\mathbb{R}^{|V|-1}$ with prescribed value $u_{i_0}=0$:
\begin{equation}\label{eq:MixingDensityU_i0}
\begin{split}
    &d\nu_{i_0}^{W,\bm{\vartheta}}(\bm{u})\\
    ={}&\mathbf{1}_{\{u_{i_0}=0\}}e^{-\frac{1}{2}\sum_{(ij)\in E(V)}W_{ij}(e^{u_i-u_j}\vartheta_j^2+e^{u_j-u_i}\vartheta_i^2-2\vartheta_i\vartheta_j)}\sqrt{D(W,\bm{u})}\prod_{i\in V, i\neq i_0}\frac{\vartheta_i e^{-u_i}du_i}{\sqrt{2\pi}}
\end{split}
\end{equation}
where the expression $D(W,\bm{u})$ is a sum over the spanning trees $T$ of the graph,
\begin{equation}\label{eq:Determinant_MatrixTree}
    D(W,\bm{u})=\sum_{T}\prod_{(ij)\in E(T)}W_{ij}e^{u_i+u_j}.
\end{equation}
Then, given a sample of the environment $\bm{u}$, sample the Markov jump process with (static or time-independent) jump rate from a vertex $i\in V$ to another vertex $j\in V$:
\begin{equation*}
    \frac{1}{2}W_{ij}e^{u_j-u_i}.
\end{equation*}

The law of $(Z_t)_{t\geq 0}$ is the same as the resulting continuous time jump process. We say that $(Z_t)_{t\geq 0}$ is a mixture of standard Markovian jump processes in the random environment~\eqref{eq:MixingDensityU_i0}.
\end{lemmA}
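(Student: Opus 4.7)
The plan is to adapt the approach of Sabot-Tarrès~\cite{MR3420510}: first show that the time-changed process $(Z_t)_{t\geq 0}$ is partially exchangeable in the sense of~\cite{MR786142}, which by the Diaconis-Freedman theorem implies that it is a mixture of standard Markovian jump processes, and then identify the explicit mixing measure by a direct density computation in which the Kirchhoff matrix-tree theorem provides the crucial combinatorial identity.

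For the first step, the time change $t = D(s)$ has derivative $\tfrac{dD}{ds} = 2 L_{Y_s}(s)$, so in the new time scale the jump rate of $Z$ from $i$ to $j$ equals $\tfrac{1}{2} W_{ij} L_j(t)/L_i(t)$ with $L_i(t) = \sqrt{\vartheta_i^2 + S_i(t)}$ by~\eqref{eq:changed_local_times}. Writing the joint density of a piecewise constant trajectory with prescribed jumps $i_0 \to i_1 \to \cdots \to i_N$ at times $\tau_1 < \cdots < \tau_N$, the jump contributions give $\prod_k \tfrac{W_{i_k i_{k+1}}}{2}\, L_{i_{k+1}}(\tau_k)/L_{i_k}(\tau_k)$, and the survival factors give exponentials of $-\int \sum_j \tfrac{W_{i_k j}}{2}\, L_j(t)/L_{i_k}(t)\, dt$ over the holding intervals. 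Since during the $k$-th holding interval only $L_{i_k}$ varies (with $\tfrac{d}{dt} L_{i_k}^2 = 1$), all these integrals are elementary, and after a telescoping simplification of the ratios of $L$'s across jumps the density depends on the trajectory only through the edge-crossing counts $(n_{ij})$ and the final local times $(S_i)$; this is precisely partial exchangeability.

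For the second step, one must verify the identity
\begin{equation*}
    f_{\mathrm{VRJP}}(\text{trajectory}) = \int d\nu_{i_0}^{W,\bm{\vartheta}}(\bm{u}) \cdot f_{\mathrm{Markov},\,\frac{1}{2}W_{ij}e^{u_j-u_i}}(\text{trajectory}).
\end{equation*}
The Markov density on the right factorizes into edge weights $\prod (W_{ij}/2)^{n_{ij}}$, a drift factor $\prod e^{u_j-u_i}$ along each jump, and an exponential holding term $\exp\!\bigl(-\tfrac{1}{2}\sum_{(ij)} W_{ij}(e^{u_j-u_i}S_i + e^{u_i-u_j}S_j)\bigr)$. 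Multiplying by the proposed density~\eqref{eq:MixingDensityU_i0} and integrating in $\bm{u}$, the cross terms in the exponent should rearrange so that only the sufficient statistics $(n_{ij}, S_i)$ remain, while the matrix-tree expression $\sqrt{D(W,\bm{u})}$ in~\eqref{eq:Determinant_MatrixTree} supplies precisely the normalizing square-root determinant needed to match the VRJP density computed in the first step.

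The main difficulty is this last algebraic matching: a careful Gaussian-type bookkeeping in the variables $(u_i)$, combined with the Kirchhoff formula, is needed to reproduce the correct $\bm{\vartheta}$-dependent prefactors. A cleaner but less elementary alternative is to exploit the $\text{H}^{2|2}$-integral framework of Section~\ref{subse:supersymmetric_hyperbolic_sigma_model}: the density~\eqref{eq:MixingDensityU_i0} arises (after an explicit change of variables) as the marginal of a horospherical coordinate of $\widetilde{\bm{v}}$ under the pinned measure~\eqref{eq:H22_expectation}, and the trivial partition function identity~\eqref{eq:TrivialPartitionFunction_H22}, together with the localization formulas of supersymmetric calculus, then provide the correct normalization automatically; one is then left only with checking that the Markov process in this environment has the same finite-dimensional law as $Z$, which reduces to the density computation of the first step but without having to produce the $\sqrt{D(W,\bm{u})}$ factor by hand.
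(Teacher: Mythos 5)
First, for calibration: the paper itself gives no proof of this lemma. It is recalled verbatim as a result of Sabot--Tarr\`es~\cite{MR3420510}, and the paper explicitly flags that even the fact that \eqref{eq:MixingDensityU_i0} is a probability density is ``highly non-trivial'', pointing to \cite{letac2017multivariate} and \cite[Eq.~(5.1)]{MR2728731}. So your proposal is an attempted reconstruction of the cited proof. Your first step is essentially correct and matches the known argument: the time-changed rates $\frac{1}{2}W_{ij}L_j/L_i$, the elementary integration of the survival factors using $\frac{d}{dt}L_{i_k}^2=1$, and the telescoping of the ratios $L_{i_k}(\tau_k)/L_{i_{k-1}}(\tau_k)$ do show that the trajectory density of $(Z_t)$ depends only on the crossing numbers $(n_{ij})$, the endpoints, and the final local times, which is partial exchangeability in the sense of~\cite{MR786142} (recurrence on the finite graph supplies the hypothesis needed to invoke Diaconis--Freedman).

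The genuine gap is your second step, which is where the whole content of the lemma sits. Partial exchangeability only gives the \emph{existence} of a mixing measure; the lemma is the identification of that measure as \eqref{eq:MixingDensityU_i0}, i.e.\ the disintegration identity you wrote down. You assert that the cross terms ``should rearrange'' and that $\sqrt{D(W,\bm{u})}$ ``supplies precisely the normalizing square-root determinant'', but the object one must actually evaluate, after multiplying the quenched trajectory density by \eqref{eq:MixingDensityU_i0}, is
\begin{equation*}
\int \mathbf{1}_{\{u_{i_0}=0\}}\, e^{u_{b}}\, e^{-\frac{1}{2}\sum_{(ij)\in E(V)}W_{ij}\left(e^{u_j-u_i}L_i^2+e^{u_i-u_j}L_j^2-2\vartheta_i\vartheta_j\right)}\sqrt{D(W,\bm{u})}\prod_{i\in V,\, i\neq i_0}\frac{\vartheta_i e^{-u_i}\,du_i}{\sqrt{2\pi}},
\end{equation*}
with $L_i=\sqrt{\vartheta_i^2+S_i}$ and $b$ the endpoint of the trajectory. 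This is not a Gaussian integral and is not computed by ``bookkeeping plus Kirchhoff'': recognizing its value requires exactly the two inputs the paper isolates, namely the shift formula of Lemma~\ref{eq:change_of_starting_point_mixing_measure} and, crucially, the fact that $\nu^{W,\bm{\vartheta}}_{i_0}$ has total mass $1$ for \emph{every} $\bm{\vartheta}$ (the Sabot--Tarr\`es--Zeng/Letac--Weso\l{}owski integral), which is precisely the non-trivial identity established in the literature by other means (a Laplace-type limit computation in~\cite{MR3420510}, a direct proof in~\cite{letac2017multivariate}, supersymmetric localization in~\cite{MR2728731}); the matrix-tree theorem only explains why $D(W,\bm{u})$ is a determinant, not why the integral closes. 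Your proposed $\text{H}^{2|2}$ shortcut does supply that normalization for free, but it does not, as you claim, reduce the rest to ``the density computation of the first step'': that first step computes only the left-hand side of the disintegration identity, and the matching of the annealed quenched density with it is still the same unproved identity. As it stands the proposal is a correct plan whose decisive computation is missing.
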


We also say that the above static Markov jump process is the \emph{quenched} process (in the environment $\bm{u}$). The fact that~\eqref{eq:MixingDensityU_i0} is a probability density function is highly non-trivial without this lemma: a direct proof can be found in~\cite{letac2017multivariate} and a supersymmetric proof in~\cite[Equation~(5.1)]{MR2728731}. The term $D(W,\bm{u})$ is actually a determinant (see the discussion after~\cite[Theorem~2]{MR3420510}), and~\eqref{eq:Determinant_MatrixTree} is obtained by the famous matrix tree theorem.

As a consequence of the previous lemma, we get the following generalization to the Vertex Reinforced Jump Process on the augmented graph $\widetilde{V}$ killed at the root vertex $\delta$. All the notations are similarly defined on the augmented graph, and the mixing measure becomes
\begin{equation}\label{eq:killed_MixingDensityU_i0}
\begin{split}
    &d\nu_{i_0}^{\widetilde{W},\bm{\widetilde{\vartheta}}}(\widetilde{\bm{u}})\\
    ={}&\mathbf{1}_{\{u_{i_0}=0\}}e^{-\frac{1}{2}\sum_{(ij)\in E(\widetilde{V})}\widetilde{W}_{ij}(e^{u_i-u_j}\vartheta_j^2+e^{u_j-u_i}\vartheta_i^2-2\vartheta_i\vartheta_j)}\sqrt{D(\widetilde{W},\widetilde{\bm{u}})}\prod_{i\in \widetilde{V}, i\neq i_0}\frac{\vartheta_i e^{-u_i}du_i}{\sqrt{2\pi}}.
\end{split}
\end{equation}
The law of $(\widetilde{Z}_t)_{t\geq 0}$ is also that of a mixture of standard Markovian jump processes with jump rate\begin{equation*}
    \frac{1}{2}\widetilde{W}_{ij}e^{u_j-u_i}
\end{equation*}
starting at $i_0\in V$ and killed at the root $\delta\in\widetilde{V}$ in the random environment defined by~\eqref{eq:killed_MixingDensityU_i0}.

\begin{rema}
Notice that $\bm{u}$ above is a vector indexed by $V\setminus\{i_0\}$ and $\widetilde{\bm{u}}$ is a vector indexed by $V$. This is a slight abuse of notations due to the extra pinning condition.
\end{rema}

We record a ``change of starting point'' formula in the sequel for the mixing measure.
\begin{lemm}[Shift in the mixing measure]\label{eq:change_of_starting_point_mixing_measure}
Let $a,b\in V$ and consider the mixing measures $d\nu^{W,\bm{1}}_a$ and $d\nu^{W,\bm{1}}_b$ with different starting points. If $\bm{u}'$ and $\bm{u}$ are such that $u'_i=u_i-u_b$ for all $i\in V$, then
\begin{equation*}
    e^{u_b-u_a}d\nu^{W,\bm{1}}_a(\bm{u})=d\nu^{W,\bm{1}}_b(\bm{u}').
\end{equation*}
\end{lemm}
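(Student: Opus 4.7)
\medskip
\noindent\emph{Proof proposal.} The plan is to verify the identity by a direct computation, checking how each of the three factors composing the density~\eqref{eq:MixingDensityU_i0} (specialized to $\bm{\vartheta}=\bm{1}$) transforms under the global shift $u'_i=u_i-u_b$, and then combining the pieces. Throughout, the constraint $u_a=0$ is imposed on the left-hand side, which together with the definition of $\bm{u}'$ automatically enforces $u'_b=0$, and conversely. So the identity is really an identity between two densities on the same $(|V|-1)$-dimensional affine hyperplane, just parametrized differently.

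First, the quadratic exponential factor $\exp\bigl(-\tfrac{1}{2}\sum_{(ij)\in E(V)}W_{ij}(e^{u_i-u_j}+e^{u_j-u_i}-2)\bigr)$ depends only on the differences $u_i-u_j$, hence is manifestly invariant under the global shift. Second, for the matrix--tree determinant~\eqref{eq:Determinant_MatrixTree}, the key observation is that every spanning tree $T$ of $V$ has exactly $|V|-1$ edges, so that
\begin{equation*}
\prod_{(ij)\in E(T)}e^{u'_i+u'_j}=e^{-2(|V|-1)u_b}\prod_{(ij)\in E(T)}e^{u_i+u_j},
\end{equation*}
which yields $\sqrt{D(W,\bm{u}')}=e^{-(|V|-1)u_b}\sqrt{D(W,\bm{u})}$ after summing over $T$ and taking the square root.

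Third, I would handle the product factor $\prod_{i\neq i_0}\frac{e^{-u_i}du_i}{\sqrt{2\pi}}$, where the ``excluded'' vertex differs between the two sides. The change of variables $(u_i)_{i\neq a}\mapsto (u'_i)_{i\neq b}$, where $u'_b=0$ is the new gauge and $u'_a=-u_b$, is an affine map whose Jacobian matrix has a block-triangular structure with determinant $\pm 1$, so $\prod_{i\neq b}du'_i=\prod_{i\neq a}du_i$. For the exponential product, using $u_a=0$,
\begin{equation*}
\prod_{i\neq b}e^{-u'_i}=\exp\Bigl(-\sum_{i\neq b}(u_i-u_b)\Bigr)=e^{(|V|-1)u_b}\cdot e^{-\sum_{i\neq a,b}u_i}=e^{|V|u_b}\prod_{i\neq a}e^{-u_i}.
\end{equation*}

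Combining: the total multiplicative factor picked up by passing from $d\nu_a^{W,\bm{1}}(\bm{u})$ to $d\nu_b^{W,\bm{1}}(\bm{u}')$ is $e^{-(|V|-1)u_b}\cdot e^{|V|u_b}=e^{u_b}$, and since $u_a=0$ this equals $e^{u_b-u_a}$, giving the stated identity. There is no real obstacle here beyond bookkeeping: the one place where it is easy to make a mistake is the counting of powers of $e^{u_b}$ between the $\sqrt{D(W,\bm{u})}$ factor (contributing $-(|V|-1)$) and the $(|V|-1)$-fold product of $e^{-u'_i}$ (contributing $+|V|$, since the sum runs over $V\setminus\{b\}$ and includes $a$), and these must conspire to leave precisely a single $e^{u_b}$. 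Conceptually, one can regard the lemma as the gauge-cocycle statement that the ungauged density is invariant under a global additive shift, and $e^{u_b-u_a}$ is the transition function between the two gauge-fixings.
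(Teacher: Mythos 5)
Your verification is correct: the difference-only dependence of the exponential factor, the $e^{-(|V|-1)u_b}$ scaling of $\sqrt{D(W,\bm{u})}$ from the $|V|-1$ edges of each spanning tree, the $e^{|V|u_b}$ from the product $\prod_{i\neq b}e^{-u'_i}$ (using $u_a=0$), and the unit Jacobian combine to exactly $e^{u_b-u_a}$. This is precisely the direct computation from the definition~\eqref{eq:MixingDensityU_i0} that the paper invokes (without spelling it out), so your proposal matches the paper's approach.
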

As a consequence, if $F$ is a function of the gradients $(u_i-u_j)_{i\in V}$, then
\begin{equation*}
    \int_{\mathbb{R}^{|V|-1}}F((u_i-u_j)_{i\in V})e^{u_b-u_a}d\nu^{W,\bm{1}}_a(\bm{u})=\int_{\mathbb{R}^{|V|-1}}F((u'_i-u'_j)_{i\in V})d\nu^{W,\bm{1}}_a(\bm{u}').
\end{equation*}

\begin{proof}
This follows directly from the definition of the mixing measure~\eqref{eq:MixingDensityU_i0}.
\end{proof}

The second result is the surprising link between the Vertex Reinforced Jump Process and the $\text{H}^{2|2}$ supersymmetric field theory.
\begin{theoA}[Marginal law of $\text{H}^{2|2}$ and the mixing measure of the Vertex Reinforced Jump Process]\label{lemm:Link_VRJP_H22}
Consider the following change of coordinates: for each $i\in\widetilde{V}$, define
\begin{equation*}
\begin{split}
    &x_i=\sinh(t_i)-(\frac{1}{2}s_i^2+\psi_i\overline{\psi}_i)e^{t_i},~y_i=s_ie^{t_i},~z_i=\cosh(t_i)+(\frac{1}{2}s_i^2+\psi_i\overline{\psi}_i)e^{t_i},\\
    &\xi_i=\psi_i e^{t_i},~\eta_i=\overline{\psi}_i e^{t_i}.
\end{split}
\end{equation*}
Then $(\widetilde{\bm{t}},\widetilde{\bm{s}},\widetilde{\overline{\bm{\psi}}},\widetilde{\bm{\psi}})$ is the horospherical coordinates of the $\text{H}^{2|2}$-model. The marginal law of $\widetilde{\bm{t}}$ of the $\text{H}^{2|2}$-model defined by~\eqref{eq:H22_expectation} is equal to the law of the random environment $\widetilde{\bm{u}}$ defined with the mixing measure~\eqref{eq:killed_MixingDensityU_i0} and $i_0=\delta$ (in the sense of Remark~\ref{rema:equal_in_law}).
\end{theoA}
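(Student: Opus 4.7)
The plan is to work directly in horospherical coordinates, compute the image of the $\text{H}^{2|2}$-integration measure~\eqref{eq:H22_expectation} under the prescribed change of variables, and then integrate out $(\bm{s},\bm{\psi},\overline{\bm{\psi}})$ explicitly, thereby exhibiting the marginal density of $\widetilde{\bm{t}}$ as the mixing density~\eqref{eq:killed_MixingDensityU_i0} with $\widetilde{\bm{\vartheta}}=\bm{1}$ and $i_0=\delta$, in the sense of Remark~\ref{rema:equal_in_law}.

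First, I would verify by direct algebra that the prescribed map indeed lands in $\text{H}^{2|2}$ (i.e.\ $v_i^2=-1$), using the identities $z_i+x_i=e^{t_i}$ and $z_i-x_i=e^{-t_i}+(s_i^2+2\psi_i\overline{\psi}_i)e^{t_i}$. The same identities give
\begin{equation*}
    v_i\cdot v_j=-\cosh(t_i-t_j)-\tfrac{1}{2}e^{t_i+t_j}(s_i-s_j)^2-e^{t_i+t_j}(\psi_i-\psi_j)(\overline{\psi}_i-\overline{\psi}_j),
\end{equation*}
so that the action $\frac{1}{2}\widetilde{\bm{v}}\Delta_{\widetilde{W}}\widetilde{\bm{v}}=-\sum_{(ij)\in E(\widetilde{V})}\widetilde{W}_{ij}(v_iv_j+1)$ splits cleanly into a $\bm{t}$-part $\sum_{(ij)\in E(\widetilde{V})}\widetilde{W}_{ij}[\cosh(t_i-t_j)-1]$, a real Gaussian $\bm{s}$-part $\frac{1}{2}\bm{s}^{T}A(\widetilde{W},\widetilde{\bm{t}})\bm{s}$, and a quadratic Grassmann part $\overline{\bm{\psi}}^{T}A(\widetilde{W},\widetilde{\bm{t}})\bm{\psi}$, where $A(\widetilde{W},\widetilde{\bm{t}})$ denotes the reduced Laplacian on $V$ with weights $\widetilde{W}_{ij}e^{t_i+t_j}$ (the pinning $v_\delta=0$ forces $t_\delta=s_\delta=\psi_\delta=\overline{\psi}_\delta=0$, which supplies the Dirichlet condition). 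For the Berezin form, I would invoke the standard computation from~\cite[Appendix~B]{MR2728731} to rewrite $D\mu(\widetilde{\bm{v}})=\prod_{i\in\widetilde{V}}\frac{1}{2\pi}e^{-t_i}\,dt_i\,ds_i\,d\psi_i\,d\overline{\psi}_i$.

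With the measure and the action in hand, the $\bm{s}$-integration is a standard real Gaussian producing $(2\pi)^{|V|/2}/\sqrt{\det A(\widetilde{W},\widetilde{\bm{t}})}$, while the $(\bm{\psi},\overline{\bm{\psi}})$-integration is a Gaussian Berezin integral producing $\det A(\widetilde{W},\widetilde{\bm{t}})$. The matrix-tree theorem identifies this determinant with the tree sum $D(\widetilde{W},\widetilde{\bm{t}})$ of~\eqref{eq:Determinant_MatrixTree}. Combining these two factors into $\sqrt{D(\widetilde{W},\widetilde{\bm{t}})}$, multiplying by the surviving $\prod_{i\in V}\frac{1}{2\pi}e^{-t_i}$ from the Berezin form, and exponentiating the $\bm{t}$-part of the action (using $\cosh(u)-1=\frac{1}{2}(e^u+e^{-u}-2)$) yields precisely the density~\eqref{eq:killed_MixingDensityU_i0} with $\widetilde{\bm{\vartheta}}=\bm{1}$ and $i_0=\delta$, which concludes the proof.

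The main obstacle is the Berezin form computation itself: because $x_i$ and $z_i$ both carry the nilpotent piece $\psi_i\overline{\psi}_i e^{t_i}$, the super-Jacobian of the change of variables mixes its even and odd blocks, so one cannot simply multiply ordinary and Grassmann Jacobians and must instead compute the Berezinian of a non-diagonal graded matrix. Once the $e^{-t_i}$ factor is accepted and all powers of $2\pi$ are tracked, the remaining steps reduce to two Gaussian integrals (one commutative, one anticommutative) followed by the matrix-tree theorem, so the conceptual content of the proof really lies in the super-geometric change of coordinates.
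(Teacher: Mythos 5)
Your outline is correct, and it is essentially the standard proof of this result from the literature: the paper itself does not prove Theorem~\ref{lemm:Link_VRJP_H22} at all, but states it as background and refers to~\cite{MR2728731} and~\cite{MR3420510}, so there is no in-paper argument to compare against; what you write is precisely the Disertori--Spencer--Zirnbauer/Sabot--Tarr\`es computation. Your key identity $v_i\cdot v_j=-\cosh(t_i-t_j)-\tfrac12 e^{t_i+t_j}(s_i-s_j)^2-e^{t_i+t_j}(\psi_i-\psi_j)(\overline\psi_i-\overline\psi_j)$ checks out, the pinning $v_\delta=0$ does give the Dirichlet condition $t_\delta=s_\delta=\psi_\delta=\overline\psi_\delta=0$, and the bookkeeping $\det A^{-1/2}\cdot\det A=\sqrt{\det A}=\sqrt{D(\widetilde W,\widetilde{\bm t})}$ together with the $(2\pi)^{-|V|/2}$ and $e^{-t_i}$ factors does reproduce $d\nu_\delta^{\widetilde W,\bm 1}$, which is the correct reading of the statement (initial local times $\bm 1$). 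Two remarks. First, the one genuinely nontrivial step, the Berezinian of the horospherical change of variables yielding $\prod_i\frac{1}{2\pi}e^{-t_i}dt_i\,ds_i\,d\psi_i\,d\overline\psi_i$ (the integration being over $i\in V$ only, after pinning), is outsourced by you to~\cite[Appendix~B]{MR2728731}; since the paper outsources the entire theorem, this is acceptable, but be aware that you have not actually computed the mixed even/odd blocks you flag as the main obstacle --- a fully self-contained proof would require that Berezinian calculation and some care with the sign/ordering conventions in the fermionic Gaussian integral $\int e^{-\overline{\bm\psi}A\bm\psi}\prod d\psi\,d\overline\psi=\det A$. Second, it is worth noting that the paper deliberately bypasses horospherical coordinates: its self-contained tool is the supersymmetric Bayes formula (Theorem~\ref{th:susy_Bayes}), proved by a direct cancellation in the $\mathbb{R}^{3|2}$ coordinates between the $\text{H}^{2|2}$ action and the mixing density, which subsumes the content of Theorem~\ref{lemm:Link_VRJP_H22} (take $g$ depending only on $\widetilde{\bm u}$, $a=\delta$, $s=0$, and use the localization formula) without ever invoking it. So your route buys the classical coordinate picture, while the paper's route buys coordinate-freeness and a stronger, directly usable identity.
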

This result is best understood when one uses the horospherical coordinates representation of the $\text{H}^{2|2}$-model as in~\cite{MR2728731}. Since we do not use this particular geometric input in the sequel, we refer to~\cite{MR2728731} and~\cite{MR3420510} for the background and proof of this fundamental result. We do not directly use Theorem~\ref{lemm:Link_VRJP_H22}, but this philosophy will be important in the (self-contained) proof of the supersymmetric Bayes formula, Theorem~\ref{th:susy_Bayes} below.

\subsection{Localization formula}\label{subse:localization_formula}
In supersymmetric field theories, a fundamental formula is the so-called (supersymmetric) localization formula. We will use especially the following formulation:
\begin{lemmA}[Parisi-Sourlas formula]\label{lemm:Parisi-Sourlas}
Recall the supersymmetric free field $\bm{X}$ defined in Section~\ref{subse:supersymmetric_free_field}. Let $F((X_iX_j)_{i,j\in V})$ be a smooth bounded function on the inner products $(X_iX_j)_{i,j\in V}$ with rapid decay. Then for the supersymmetric free field expectation with pinning at $\delta\in\widetilde{V}$, we have
\begin{equation*}
    \llbracket F \rrbracket_{\widetilde{W},\widetilde{X}_{\delta}=0}=\int_{(\mathbb{R}^{2|2})^{|V|}}F((X_iX_j)_{i,j\in V})e^{-\frac{1}{2}\bm{X}(\Delta_W+H)\bm{X}}D\widetilde{X}=F(0),
\end{equation*}
where $H$ is the diagonal matrix with diagonal coefficients $(W_{i\delta})_{i\in V}$.
\end{lemmA}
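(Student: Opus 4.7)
The plan is to prove this via the standard supersymmetric localization (Parisi--Sourlas) argument, in three steps.

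First, I would observe that the first equality is essentially definitional: imposing the pinning $\widetilde{X}_\delta=0$ inside $\llbracket\cdot\rrbracket_{\widetilde{W},\widetilde{X}_\delta=0}$ reduces the augmented quadratic form $\widetilde{\bm{X}}\Delta_{\widetilde{W}}\widetilde{\bm{X}}$ to $\bm{X}(\Delta_W+H)\bm{X}$ on the vertex set $V$, since the only effect of pinning on the diagonal is $(\Delta_{\widetilde{W}})_{ii}=(\Delta_W)_{ii}+W_{i\delta}$ while off-diagonal entries are unchanged. Since $\frac{1}{2}\bm{X}(\Delta_W+H)\bm{X}=\frac{1}{2}\sum_{i,j}(\Delta_W+H)_{ij}X_i\cdot X_j$ is itself a function of inner products, I can absorb the Gaussian factor into $F$, reducing the second equality to the universal claim: for any smooth bounded function $G$ of the inner products $(X_i\cdot X_j)_{i,j\in V}$ with rapid decay,
\begin{equation*}
    \int_{(\mathbb{R}^{2|2})^{|V|}}G\bigl((X_i\cdot X_j)_{i,j\in V}\bigr)\,D\bm{X}=G(0).
\end{equation*}

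Second, I would introduce the odd super-derivation
\begin{equation*}
    Q=\sum_{i\in V}\bigl(\xi_i\partial_{x_i}+\eta_i\partial_{y_i}-y_i\partial_{\xi_i}+x_i\partial_{\eta_i}\bigr).
\end{equation*}
A direct graded Leibniz computation yields $Q(X_i\cdot X_j)=0$ for all $i,j\in V$, hence $QG=0$ for every smooth function $G$ of inner products. Choosing the odd potential $\mathcal{V}=\sum_{i\in V}(x_i\eta_i-y_i\xi_i)$, one finds $Q\mathcal{V}=\sum_{i\in V}X_i\cdot X_i$, which is again a sum of inner products, so $Q(Q\mathcal{V})=0$. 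I would also establish the Berezin--Stokes identity $\int Q\phi\,D\bm{X}=0$ for any rapidly decaying $\phi$, using ordinary integration by parts in the bosonic variables $x_i,y_i$ combined with the Grassmann identity $\int d\xi_i\,\partial_{\xi_i}f=0$ applied to the $y_i\partial_{\xi_i}$ and $x_i\partial_{\eta_i}$ terms.

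Third, for $t\geq 0$ set $I(t)=\int G\,e^{-tQ\mathcal{V}}\,D\bm{X}$. Since $QG=0$ and $Q(e^{-tQ\mathcal{V}})=-tQ(Q\mathcal{V})\cdot e^{-tQ\mathcal{V}}=0$, the graded Leibniz rule yields $Q(\mathcal{V}Ge^{-tQ\mathcal{V}})=Q\mathcal{V}\cdot Ge^{-tQ\mathcal{V}}$, and Berezin--Stokes gives
\begin{equation*}
    I'(t)=-\int G\cdot Q\mathcal{V}\cdot e^{-tQ\mathcal{V}}\,D\bm{X}=-\int Q\bigl(\mathcal{V}Ge^{-tQ\mathcal{V}}\bigr)\,D\bm{X}=0.
\end{equation*}
Hence $I(t)\equiv I(0)=\int G\,D\bm{X}$. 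Letting $t\to\infty$, the weight $e^{-tQ\mathcal{V}}=e^{-t\sum_i(x_i^2+y_i^2)}\prod_{i\in V}(1-2t\xi_i\eta_i)$ concentrates at $\bm{X}=0$; a per-vertex computation gives $\int\frac{dx\,dy\,d\xi\,d\eta}{2\pi}e^{-t(x^2+y^2+2\xi\eta)}=\frac{1}{2t}\cdot 2t=1$, and a Taylor expansion of $G$ at the origin shows that the limit is $G(0)$. The main obstacle is making the $t\to\infty$ limit rigorous: one needs to control the boundary terms in Berezin--Stokes under the rapid-decay hypothesis and confirm that the higher-order Taylor corrections give contributions vanishing as powers of $1/t$.
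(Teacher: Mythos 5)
Your argument is correct, and it is essentially the standard supersymmetric localization proof: the paper itself does not prove this lemma but cites the literature (e.g.\ Lemma~16 of~\cite{MR2728731}), where the argument is precisely of the type you give — an odd derivation $Q$ annihilating the inner products, $Q$-exactness of the deformation $e^{-tQ\mathcal{V}}$ with $Q\mathcal{V}=\sum_i X_i\cdot X_i$, vanishing of Berezin--Stokes boundary terms, and the $t\to\infty$ concentration using $\int\frac{dx\,dy\,d\xi\,d\eta}{2\pi}e^{-t(x^2+y^2+2\xi\eta)}=1$. Your reduction of the first equality to the definition of the pinned quadratic form and your absorption of the Gaussian weight into the test function are both fine, and the $t\to\infty$ estimate you flag is routine under the rapid-decay hypothesis, so there is no gap.
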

Many proofs of this lemma can be found in the literature, e.g.~\cite[Lemma~16]{MR2728731}. An important consequence is the following~\cite[Proposition~2]{MR2728731}:
\begin{coroA}[Trivialness of the partition functions]\label{coro:TrivialnessPartitionFunction}
The partitions functions for the supersymmetric free field~\eqref{eq:TrivialParitionFunction} and for the $\text{H}^{2|2}$-model~\eqref{eq:TrivialPartitionFunction_H22} are both equal to $1$, i.e.
\begin{equation*}
    \llbracket 1 \rrbracket_{\widetilde{W},\widetilde{X}_{\delta}=0}=\langle 1 \rangle_{\widetilde{W},\widetilde{\Phi}_{\delta}=0}=1.
\end{equation*}
\end{coroA}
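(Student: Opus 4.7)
The plan is to deduce both partition-function identities from the Parisi-Sourlas localization formula (Lemma~\ref{lemm:Parisi-Sourlas}), handling the two models separately since the latter only literally covers the flat superfield $\bm{X}\in(\mathbb{R}^{2|2})^{|V|}$.

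For the supersymmetric free field (identity~\eqref{eq:TrivialParitionFunction}), the argument is immediate: take the test function $F\equiv 1$ in Lemma~\ref{lemm:Parisi-Sourlas}. Since the constant function is trivially a function of the inner products $(X_iX_j)_{i,j\in V}$ with rapid decay after multiplication by the Gaussian density, the localization formula applies and yields
\[
\llbracket 1\rrbracket_{\widetilde{W},\widetilde{X}_{\delta}=0}=F(0)=1.
\]
Here one should also observe that the quadratic form $\frac{1}{2}\widetilde{\bm{X}}\Delta_{\widetilde{W}}\widetilde{\bm{X}}$ under the pinning $\widetilde{X}_\delta=0$ reduces to $\frac{1}{2}\bm{X}(\Delta_W+H)\bm{X}$ with $H$ the diagonal boundary-weight matrix, which matches the form used in Lemma~\ref{lemm:Parisi-Sourlas}.

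For the $\text{H}^{2|2}$-model (identity~\eqref{eq:TrivialPartitionFunction_H22}), the key observation is that the action is a function of inner products alone: using the constraint $v_i^2=-1$ one has
\[
\tfrac{1}{2}\widetilde{\bm{v}}\Delta_{\widetilde{W}}\widetilde{\bm{v}}=-\sum_{(ij)\in E(\widetilde{V})}\widetilde{W}_{ij}(v_i\cdot v_j+1),
\]
so the integrand depends on $\widetilde{\bm{v}}$ only through the scalar products $(v_iv_j)_{i,j}$ and through the constraints $v_i^2=-1$, which themselves are inner products. Thus one is again in the regime where a localization/Parisi-Sourlas type identity applies, giving the value of the integral as the value of the integrand at the origin $v_i=(0,0,1,0,0)$, where every inner product $v_i\cdot v_j$ equals $-1$ and hence every summand $v_i\cdot v_j+1$ vanishes; the exponent collapses to $0$ and the remaining Berezin integrations reduce to $1$.

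The main obstacle is justifying this second application of localization: Lemma~\ref{lemm:Parisi-Sourlas} is stated for the flat $\mathbb{R}^{2|2}$ superfield, not on the curved supermanifold $\text{H}^{2|2}$ with measure $\frac{1}{2\pi}\frac{dx_i\,dy_i}{z_i}d\xi_i\,d\eta_i$. The cleanest way around this is to invoke the standard localization principle on $\text{H}^{2|2}$ as developed in~\cite{MR2728731}: the measure $D\mu$ is the natural supersymmetry-invariant form on the constraint surface $v^2=-1$, and any supersymmetric observable $F((v_iv_j)_{i,j})$ satisfies $\int F\,e^{-\frac{1}{2}\bm{v}\Delta_W\bm{v}}D\mu(\bm{v})=F(\bm{0})$ with pinning at $\delta$. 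Specialising to $F\equiv 1$ yields exactly~\eqref{eq:TrivialPartitionFunction_H22}. Both halves of the corollary then follow.
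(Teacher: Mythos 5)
Your first half coincides with the paper's proof verbatim: take $F\equiv 1$ in Lemma~\ref{lemm:Parisi-Sourlas} to get $\llbracket 1\rrbracket_{\widetilde{W},\widetilde{X}_\delta=0}=1$. For the $\text{H}^{2|2}$ identity, however, you take a genuinely different route from the paper. You invoke a localization principle directly on the curved supermanifold $\text{H}^{2|2}$, citing~\cite{MR2728731}; since that reference does contain this Ward-identity/localization machinery (its Proposition~2 is precisely the statement that the pinned $\text{H}^{2|2}$ partition function equals $1$), your argument reaches the correct conclusion, but for the special case $F\equiv 1$ it amounts to citing the known result rather than deriving it, so it buys brevity at the price of self-containedness. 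The paper instead stays entirely within the flat Lemma~\ref{lemm:Parisi-Sourlas}: it parametrizes each $\text{H}^{2|2}$ spin by $X_i=(x_i,y_i,\xi_i,\eta_i)$ with $z_i=\sqrt{1+X_i^2}$, observes that after factoring out the flat Gaussian weight $e^{-\frac{1}{2}\bm{X}(\Delta_W+H)\bm{X}}$ the entire remaining integrand of~\eqref{eq:TrivialPartitionFunction_H22} --- the mismatch between the curved and flat actions together with the factor $\prod_i z_i^{-1}$ coming from $D\mu$ --- is a function of the inner products $(X_iX_j)$ alone, and then applies the flat Parisi--Sourlas formula to this explicit $F$, whose value at the origin (where all $z_i=1$) is $1$. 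This is essentially the rigorous version of the computation you sketch in your middle paragraph (the action written through $v_i\cdot v_j+1$, the value at the origin), so if you want a proof that does not outsource the curved case, push that paragraph through: absorb the $1/z_i$ factors and the action mismatch into a single inner-product observable for the flat superfield and let Lemma~\ref{lemm:Parisi-Sourlas} do the localization, exactly as the paper does.
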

\begin{proof}
Taking $F\equiv 1$ in the Parisi-Sourlas formula, we get the trivialness of the partition function for the supersymmetric free field~\eqref{eq:TrivialParitionFunction}. For the $\text{H}^{2|2}$-model the argument is similar: define $z_i=\sqrt{1+X_i^2}$ for $i\in V$ (which is a function of the inner products) and let
\begin{equation*}
    F=\mathbf{1}_{\{\widetilde{\bm{v}}_\delta=0\}}e^{-\frac{1}{2}\widetilde{\bm{z}}\Delta_{\widetilde{W}}\widetilde{\bm{z}}-\sum_{i\in V}W_{i\delta}(z_i-1)}\prod_{i\in V}\frac{1}{z_i}
\end{equation*}
be a rapidly decaying function on the inner products $X_iX_j$. Plugging this in Lemma~\ref{lemm:Parisi-Sourlas} yields the triviality of the partition function for the $\text{H}^{2|2}$-model.
\end{proof}

Another consequence of the localization formula is the invariance of the inner product $v_i\cdot v_j$ and the Berezin integral form $D\mu(v)$ under the Lorentz boost
\begin{equation}\label{eq:LorentzBoost}
    \theta_s(x,y,z,\xi,\eta)=(x\cosh s+z\sinh s,y,z\cosh s+x\sinh s,\xi,\eta).
\end{equation}
We refer to~\cite[Appendix~B]{MR2728731} for a proof. Notice that the Lorentz invariance is broken for the normalized supersymmetric expectation $\langle\cdot\rangle_{W,\delta}$ with pinning at the root vertex $\delta$: indeed, the Lorentz invariance also acts on the boundary condition, so that on the augmented graph $\widetilde{V}$,
\begin{equation*}
    \forall s\in\mathbb{R},\quad \langle F(\widetilde{\bm{v}})\rangle_{\widetilde{W},v_\delta=\theta_s(0)}=\int_{(\text{H}^{2|2})^{|V|}}F(\widetilde{\bm{v}})\mathbf{1}_{\{v_{\delta}=\theta_s(0)\}}e^{-\frac{1}{2}\widetilde{\bm{v}}\Delta_{\widetilde{W}}\widetilde{\bm{v}}}D\mu(\widetilde{\bm{v}}),
\end{equation*}
and similarly for expressions on the graph $V$.

\section{Wilson's algorithm with reinforcement}\label{sec:wilson_s_algorithm_with_reinforcement}
Consider a connected finite graph $V$ with a distinguished vertex $\delta$ and edge weights $W_{ij}$ for $i,j\in V$ (we write $V$ instead of $\widetilde{V}$ for rooted graphs in sections dealing with Wilson's algorithms). We describe below how an infinite chain of Vertex Reinforced Jump Process starting from $\delta$ can be used to generate a pair formed by a random spanning tree and a random collection of loops on the graph $V$. This process will be called \emph{Wilson's algorithm with reinforcement}, or the \emph{reinforced Wilson's algorithm}. We further show that, under certain random initial conditions, the pair of the random spanning tree and the random collection of loops on $V$ obtained by Wilson's algorithm with reinforcement can be obtained using a mixture of the standard Markovian Wilson's algorithm in some random environment.

\subsection{Wilson's algorithm generated by a single reinforced vertex jump process}\label{subse:WilsonWithOneExploration}
Consider a Vertex Reinforced Jump Process on a connected finite graph $V$ starting from a distinguished vertex $\delta$. Suppose that the initial local time of each vertex $i$ in $V$ is $\vartheta_i$ and run a Vertex Reinforced Jump Process starting from $\delta$, without being killed. It is known that this process is almost surely recurrent~\cite{MR3420510}, therefore eventually visits every vertex in $V$ infinitely many times.

We now describe our definition of Wilson's algorithm with reinforcement, whose outcome is a pair formed by a random spanning tree and a random collection of loops on the graph $V$. The main difference with the standard Markovian Wilson's algorithm is that the random walk will switch between two forms: a \emph{``visible''} form where we perform loop erasure to construct the spanning tree and the loop soup, and an \emph{``invisible''} form where we stop the loop erasure but only accumulate local times for the reinforcement mechanism.\footnote{We choose the word ``visible'' to be coherent with terminologies in~\cite[Chapter~4]{MR3616205} where the cycle-popping construction of Wilson's algorithm is reviewed.} The reinforced Wilson's algorithm gradually constructs a spanning tree as we will now explain.

\begin{enumerate}
    \item We start with the single-vertex graph $\mathcal{T}=\{\delta\}$: vertices of $\mathcal{T}$ will be called inactive. Define $\mathcal{A}$ to be the complement of vertices of $\mathcal{T}$ in $V$: this is the collection of active vertices.
    \item We run a Vertex Reinforced Jump Process $Y$ starting from the vertex $\delta$ with edge weights $(W_{ij})_{i,j\in V}$ and initial local times $\{\vartheta_i\}_{i\in V}$, and we denote by $\mathcal{O}=\emptyset$ the collection of loops at this stage.
    \item The process is in its invisible form when it is on a vertex inside $\mathcal{T}$. When it jumps to another vertex inside $\mathcal{T}$, we only update the local times according to the rules of the Vertex Reinforced Jump Process.
    \item The first time the process jumps to a vertex in $\mathcal{A}$, declare the process to be visible. When the process is visible, it updates the local times according to the rules of the Vertex Reinforced Jump Process. Furthermore, for the visible part of the process, we perform the loop erasure procedure as in the standard Markovian Wilson's algorithm until it jumps to a vertex in $\mathcal{T}$.
    \item Upon returning to a vertex in $\mathcal{T}$, the process is back to its invisible form, and we update $\mathcal{T}$ and the collection of loops $\mathcal{O}$ in the following way: the erased loops during the active form of the random walks will be added to the collection of loops $\mathcal{O}$, and the remaining self-avoiding branch leading to $\mathcal{T}$ will be added to form an enlarged $\mathcal{T}$.
    \item We repeat the steps $(3)-(5)$ until the moment when $\mathcal{T}$ is a spanning tree of $V$. From this moment, the process will always be in its invisible form, so that $\mathcal{T}$ and $\mathcal{O}$ will not change.
\end{enumerate}

In the sequel, we call the final random spanning tree $\mathbb{T}^{\bm{\vartheta}}$ (as the final state of $\mathcal{T}$) the \emph{reinforced spanning tree} and the final occupation time field $\widehat{\mathbb{L}^{\bm{\vartheta}}_1}$ of the random loop process (as the final state of $\mathcal{O}$) the \emph{reinforced loop soup occupation field}, with initial local time conditions $\{\vartheta_i\}_{i\in V}$.

\begin{rema}
We only need the occupation time field $\widehat{\mathbb{L}^{\bm{\vartheta}}_1}$ of the reinforced loop soup in our main Theorem~\ref{th:SUSY_BFSD_Loop}, and the reconstruction of the reinforced loop soup $\mathbb{L}^{\bm{\vartheta}}_1$ from the reinforced Wilson's algorithm is postponed to Section~\ref{subse:construction_of_general_reinforced_loop_soup_texorpdfstring_mathcal_l}.
\end{rema}

\subsection{The standard Markovian Wilson's algorithm generated by a single infinite Markov jump process}
We first study the analog of the above-defined algorithm in the standard Markovian setting, i.e. without the reinforcement mechanism.
\begin{lemm}[Standard Markovian Wilson's algorithm using one infinite exploration]\label{lemm:standard_oneshot_Wilson}
In the setting of a connected finite graph $V$ with the usual notations, a variant of Wilson's algorithm can be realized using the following procedure:
\begin{enumerate}
    \item We start with the single-vertex graph $\mathcal{T}=\{\delta\}$: vertices of $\mathcal{T}$ will be called inactive. Define $\mathcal{A}$ to be the complement of vertices of $\mathcal{T}$ in $V$: this is the collection of active vertices.
    \item We run a Markov chain $X$ starting from the vertex $\delta$, and we denote by $\mathcal{O}=\emptyset$ the collection of loops at this stage.
    \item The process is in its invisible form when it is on a vertex inside $\mathcal{I}$.
    \item The first time the process jumps to a vertex in $\mathcal{A}$, declare the process to be visible. For the visible part of the process, we perform the loop erasure procedure until it jumps to a vertex in $\mathcal{T}$.
    \item Upon returning to a vertex in $\mathcal{T}$, the process is back to its invisible form, and we update the collection of loops $\mathcal{L}$ in the following way: the erased loops during the visible form of the random walks will be added to the collection of loops $\mathcal{O}$, and the remaining branch leading to $\mathcal{T}$ will be added to form an enlarged $\mathcal{T}$.
    \item We reiterate the steps $(3)-(5)$ until the moment $\mathcal{T}$ is a spanning tree of $V$. From this moment, the process will always be in its invisible form, so that $\mathcal{T}$ and $\mathcal{O}$ will not change.
\end{enumerate}
Then the final random spanning tree $\mathcal{T}$ and the occupation time field of the final random loop process $\mathcal{O}$ have the same joint distribution as the uniform spanning tree and the occupation time field of the random loop process constructed from the standard Markovian Wilson's algorithm of Section~\ref{sec:ClassicalWilson}. In particular, the occupation time field of the final $\mathcal{O}$ is equal in law to $\widehat{\mathcal{L}_1}$, the occupation field of the Poissonian loop soup with $\alpha=1$.
\end{lemm}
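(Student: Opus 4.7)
The plan is to reduce the one-shot algorithm to the standard Wilson's algorithm by decomposing the single infinite Markov trajectory into excursions via the strong Markov property, and then invoking the flexibility of Wilson's algorithm to reorder vertices adaptively. Set $\mathcal{T}_0=\{\delta\}$ and $\sigma_0=0$. Inductively, if $\mathcal{A}_k:=V\setminus\mathcal{T}_k$ is non-empty, let $\tau_{k+1}$ be the first time after $\sigma_k$ at which $X$ jumps from $\mathcal{T}_k$ to $\mathcal{A}_k$, denote by $v_{k+1}\in\mathcal{A}_k$ the vertex of arrival, and let $\sigma_{k+1}$ be the first return of $X$ to $\mathcal{T}_k$ after $\tau_{k+1}$. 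Recurrence of $X$ on the finite graph $V$ ensures that all these stopping times are almost surely finite, and the construction partitions $[0,\infty)$ into an alternating sequence of invisible segments $[\sigma_k,\tau_{k+1}]$ (on which nothing is updated) and visible excursions $[\tau_{k+1},\sigma_{k+1}]$ (during which loop erasure is performed).

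Next, I would apply the strong Markov property at each $\tau_{k+1}$. Conditionally on the past $\mathcal{F}_{\tau_{k+1}}$, and in particular on $v_{k+1}$, the trajectory of $X$ on $[\tau_{k+1},\sigma_{k+1}]$ is distributed as an independent Markov chain started at $v_{k+1}$ and killed at its first hit of $\mathcal{T}_k$, exactly the law of the exploration that the standard Wilson's algorithm would run if it picked $v_{k+1}$ as its next vertex. Consequently, the loop-erased self-avoiding branch $\mathrm{LE}(X_{[\tau_{k+1},\sigma_{k+1}]})$ that gets glued to $\mathcal{T}_k$, together with the erased loops that get added to $\mathcal{O}$, have precisely the conditional distribution prescribed by one step of standard Wilson's algorithm.

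To conclude, I would invoke the observation recorded just after Lemma~\ref{lemm:classicalWilson_loopErasure}: at every stage of Wilson's algorithm one is free to reorder the remaining active vertices based on the data generated so far without altering the joint law of the output. The one-shot construction is then literally a realization of standard Wilson's algorithm under the adaptive ordering that picks $v_{k+1}$ at step $k+1$. Thus $(\mathcal{T},\mathcal{O})$ has the same joint distribution as in Section~\ref{sec:ClassicalWilson}, and the occupation-field statement follows at once from Lemma~\ref{lemm:OccupationFieldWilson_L1}. The main subtlety, and the only point that requires care, is to check that the invisible segments are genuinely neutral: they change neither $\mathcal{T}_k$ nor $\mathcal{O}$, and although they influence the random vertex $v_{k+1}$, this influence is absorbed into the adaptive-ordering freedom of Wilson's algorithm. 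In the continuous-time variant one must also observe that the holding times spent during invisible excursions contribute only to the time parametrization of $X$, which is integrated out of the loop-erasure procedure and the final loop collection $\mathcal{O}$.
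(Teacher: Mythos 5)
Your proof is correct, but it is packaged differently from the paper's. The paper's proof is a one-liner through the cycle-popping form of Wilson's algorithm (Section~\ref{subse:wilson_s_algorithm_and_cycle_popping}): the one-shot exploration pops every poppable cycle, and by the deterministic order-invariance of cycle popping (\cite[Lemma~4.2]{MR3616205}) the pair (spanning tree, erased-loop occupation field) has the same joint law as in Lemma~\ref{lemm:classicalWilson_cyclePopping} or Lemma~\ref{lemm:classicalWilson_loopErasure}. You instead stay in the loop-erasure picture: you decompose the single trajectory into visible excursions via the strong Markov property at the exit times $\tau_{k+1}$, identify each excursion (conditionally on the revealed data) with one step of the standard algorithm, and then invoke the adaptive-reordering freedom recorded after Lemma~\ref{lemm:classicalWilson_loopErasure}, concluding with Lemma~\ref{lemm:OccupationFieldWilson_L1}. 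This buys a more explicitly probabilistic argument that makes transparent why the invisible segments are harmless (they only produce the randomized choice of $v_{k+1}$, conditionally independent of the next excursion), something the paper leaves implicit in ``effectively pops all the possible cycles.'' One caveat: the reordering remark after Lemma~\ref{lemm:classicalWilson_loopErasure} is stated only for the tree, while you need invariance of the \emph{joint} law of tree and erased loops under a vertex-selection rule that may use the previous excursions and extra independent randomness; the clean justification of that stronger invariance is precisely the stack-of-cards determinism the paper cites, so your argument ultimately rests on the same Lemma~4.2 of \cite{MR3616205}, just invoked one level up. With that citation made explicit, your proof is complete and equivalent in strength to the paper's.
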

Notice that this is the algorithm in the above section except that no reinforcement mechanism is implemented: in particular no data is updated during the process.
\begin{proof}
It suffices to check that this algorithm effectively pops all the possible cycles in the cycle-popping form of the standard Markovian Wilson's algorithm in Section~\ref{subse:wilson_s_algorithm_and_cycle_popping}. Since the final random spanning tree $\mathcal{T}$ and the occupation time field of the random loop collection $\mathcal{O}$ are independent of the order in which the cycles are popped~\cite[Lemma~4.2]{MR3616205}, they have the same joint distribution as the pair described in Lemma~\ref{lemm:classicalWilson_cyclePopping} or Lemma~\ref{lemm:classicalWilson_loopErasure}.
\end{proof}

\subsection{Reinforced Wilson's algorithm as a mixture of standard Markovian Wilson's algorithms in random environment}
This proof of the following lemma is essentially the same as the proof of Lemma~\ref{lemm:mixutre_of_markov_jump_processes} originally stated for Vertex Reinforced Jump Processes.
\begin{lemm}[Mixture of standard Markovian Wilson's algorithms in random environment]\label{lemm:mixture_of_Markov_Wilson}
The randomized reinforced Wilson's algorithm is a mixture of standard Markovian Wilson's algorithm for a Markov chain in a random environment $\bm{u}$, with jump rates
\begin{equation*}
    \frac{1}{2}W_{ij}e^{u_j-u_i},
\end{equation*}
and the law of the random field $\bm{u}$ is given by the mixing measure $d\nu_{\delta}^{W,\bm{\vartheta}}(\bm{u})$ defined in~\eqref{eq:MixingDensityU_i0}.
\end{lemm}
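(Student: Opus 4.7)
The plan is to combine the mixture representation of the Vertex Reinforced Jump Process (Lemma~\ref{lemm:mixutre_of_markov_jump_processes}) with the ``one infinite exploration'' form of the standard Markovian Wilson's algorithm (Lemma~\ref{lemm:standard_oneshot_Wilson}). The key idea is to view the entire reinforced Wilson's algorithm of Section~\ref{subse:WilsonWithOneExploration} as a deterministic functional of the underlying Vertex Reinforced Jump Process trajectory, and then push the VRJP mixture representation through this functional.

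More precisely, let $Y = (Y_s)_{s\geq 0}$ denote the Vertex Reinforced Jump Process starting at $\delta$ with initial local times $\bm{\vartheta}$ that drives the algorithm, and let $\Phi$ be the map sending the trajectory of $Y$ to the pair $(\mathbb{T}^{\bm{\vartheta}}, \widehat{\mathbb{L}^{\bm{\vartheta}}_1})$. The crucial observation is that $\Phi$ is a \emph{deterministic} functional of the trajectory alone: the partition of time into invisible and visible epochs, the loop erasure performed on the visible segments, and the inductive growth of $\mathcal{T}$ depend only on the sequence of visited vertices and their jump times. The running local times influence the \emph{future} dynamics of $Y$ through the reinforcement rule, but they do not feed into the bookkeeping that produces $(\mathbb{T}^{\bm{\vartheta}}, \widehat{\mathbb{L}^{\bm{\vartheta}}_1})$ once $Y$ has been realized.

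By Lemma~\ref{lemm:mixutre_of_markov_jump_processes} applied with $i_0 = \delta$ and initial local times $\bm{\vartheta}$, the process $Y$ can itself be sampled as a mixture: first draw an environment $\bm{u}$ with law $d\nu_{\delta}^{W,\bm{\vartheta}}$, then, conditionally on $\bm{u}$, run a time-homogeneous Markov jump process starting at $\delta$ with jump rates $\frac{1}{2}W_{ij}e^{u_j - u_i}$. Since $\Phi$ is deterministic, the joint law of $(\mathbb{T}^{\bm{\vartheta}}, \widehat{\mathbb{L}^{\bm{\vartheta}}_1})$ inherits the same mixture structure: conditionally on $\bm{u}$, it is the image under $\Phi$ of a standard Markov jump trajectory with the above rates.

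To conclude, it remains to identify this conditional law with the joint output of the standard Markovian Wilson's algorithm in the environment $\bm{u}$. This is precisely the content of Lemma~\ref{lemm:standard_oneshot_Wilson}: applying $\Phi$ to a single infinite Markov trajectory with jump rates $\frac{1}{2}W_{ij}e^{u_j - u_i}$ produces the same joint distribution as the uniform spanning tree and loop collection generated by the standard Markovian Wilson's algorithm with those rates. I expect the only delicate point to be a careful check that $\Phi$ truly depends only on the discrete skeleton and jump times of $Y$, so that the reinforcement mechanism factors out cleanly when conditioning on $\bm{u}$; this reduces to a direct inspection of the algorithm in Section~\ref{subse:WilsonWithOneExploration}.
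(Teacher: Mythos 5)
Your proposal is correct and follows essentially the same route as the paper: apply Lemma~\ref{lemm:mixutre_of_markov_jump_processes} to represent the driving Vertex Reinforced Jump Process as a quenched Markov jump process with rates $\frac{1}{2}W_{ij}e^{u_j-u_i}$ in the environment $d\nu_{\delta}^{W,\bm{\vartheta}}$, observe that conditionally on $\bm{u}$ the reinforced algorithm coincides with the single-exploration algorithm of Lemma~\ref{lemm:standard_oneshot_Wilson}, and integrate over the environment. Your explicit remark that the pair $(\mathbb{T}^{\bm{\vartheta}},\widehat{\mathbb{L}^{\bm{\vartheta}}_1})$ is a deterministic functional of the trajectory alone is exactly the paper's phrase that the two algorithms agree ``in the quenched sense,'' just spelled out more carefully.
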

\begin{proof}
By Lemma~\ref{lemm:mixutre_of_markov_jump_processes}, the Vertex Reinforced Jump Process is a mixture of standard Markovian jump process with mixing measure $d\nu_{\delta}^{W,\bm{\vartheta}}(\bm{u})$ defined in~\eqref{eq:MixingDensityU_i0} and the above jump rate. Conditioning on the environment $\bm{u}$ and performing the standard Wilson's algorithm, which outputs the quenched uniform spanning tree and random loop process. In particular, the quenched Wilson's algorithm can be performed using the form of Lemma~\ref{lemm:standard_oneshot_Wilson}, as all the standard Markovian Wilson's algorithms yield the same uniform tree and random loop process in law. Since our reinforced Wilson's algorithm of Section~\ref{subse:WilsonWithOneExploration} is the same as that of Lemma~\ref{lemm:standard_oneshot_Wilson} in the quenched sense, this finishes the proof that the reinforced Wilson's algorithm of Section~\ref{subse:WilsonWithOneExploration} is the annealed version of Lemma~\ref{lemm:standard_oneshot_Wilson}, integrated in the random environment $\bm{u}$ with the mixing measure $d\nu_{\delta}^{W,\bm{\vartheta}}(\bm{u})$.
\end{proof}

The rules we defined for the reinforced Wilson's algorithm in Section~\ref{subse:WilsonWithOneExploration} are such that the process is partially exchangeable in the sense of~\cite{MR786142}, which guarantees the mixture of Markov process representation of Lemma~\ref{lemm:mixture_of_Markov_Wilson}.

\section{A supersymmetric Bayes formula}\label{sec:a_supersymmetric_bayes_formula}
In this section, we introduce a supersymmetric Bayes formula which will be used multiple times in the sequel. We first introduce some shorthand notations.

Recall from Section~\ref{subse:connections_between_vertex_reinforced_jump_processes_and_the_supersymmetric_hyperbolic_sigma_model} that the Vertex Reinforced Jump Process on $V$ can be realized by first sampling a random environment $\bm{u}$ on the graph $V$ with mixing measure~\eqref{eq:MixingDensityU_i0}, then sample the standard Markovian jump process with the following jump rate from a vertex $i\in V$ to another vertex $j\in V$:
\begin{equation*}
    \frac{1}{2}W_{ij}e^{u_j-u_i}.
\end{equation*}
This motivates the following notation: introduce the square matrix $A^{\bm{u}}$ of size $|V|$ with entries
\begin{equation}\label{eq:Definition_AuMatrix}
    A^{\bm{u}}_{ij}=\begin{cases} -W_{ij} & i\neq j \\ \sum_{k\in V\setminus\{i\}}W_{ik}e^{u_k-u_i} & i=j \end{cases}.
\end{equation}
Then the generator of the quenched process, that is the standard Markovian jump process in the environment $\bm{u}$, has generator $\frac{1}{2}e^{-{\bm{u}}}A^{\bm{u}}e^{\bm{u}}$, where $e^{\bm{u}}$ is the diagonal matrix of size $|V|$ with diagonal entries $(e^{u_i})_{i\in V}$. Similarly, on the augmented graph with root vertex $\delta$, the mixing measure of the random environment $\widetilde{\bm{u}}$ induces the following square matrix $\widetilde{A^{\bm{u}}}$ of size $|\widetilde{V}|$ with entries
\begin{equation}\label{eq:Definition_AuMatrix_tilded}
    \widetilde{A^{\bm{u}}_{ij}}=\begin{cases} -W_{ij} & i\neq j \\ \sum_{k\in \widetilde{V}\setminus\{i\}}W_{ik}e^{u_k-u_i} & i=j \end{cases}.
\end{equation}

The matrix $A^{\bm{u}}$ (resp. $\widetilde{A^{\bm{u}}}$) is not symmetric, but we can introduce the following symmetric square matrix $B^{\bm{u}}=e^{\bm{u}}A^{\bm{u}}e^{\bm{u}}$ (resp. $\widetilde{B^{\bm{u}}}=e^{\widetilde{\bm{u}}}\widetilde{A^{\bm{u}}}e^{\widetilde{\bm{u}}}$). Now it makes sense to speak about the supersymmetric free field in Section~\ref{subse:supersymmetric_free_field} with the matrix $W$ replaced by the matrix $B^{\bm{u}}$ (resp. $\widetilde{B^{\bm{u}}}$), for which the (unpinned) supersymmetric free field expectation will be denoted $\llbracket \cdot \rrbracket_{B^{\bm{u}}}$ (resp. $\llbracket \cdot \rrbracket_{\widetilde{B^{\bm{u}}}}$) as in Section~\ref{subse:supersymmetric_free_field}. Now we introduce the shorthand notations:
\begin{equation}\label{eq:ShortHand_Au_Expectation}
    \llbracket F(X)\rrbracket_{A^{\bm{u}}}=\llbracket F(e^{\bm{u}}X)\rrbracket_{B^{\bm{u}}},\quad \llbracket F(\widetilde{X})\rrbracket_{\widetilde{A^{\bm{u}}}}=\llbracket F(e^{\widetilde{\bm{u}}}\widetilde{X})\rrbracket_{\widetilde{B^{\bm{u}}}}.
\end{equation}

Finally, recall the Lorentz boost $\theta_s$ with $s\in\mathbb{R}$ defined in Section~\ref{subse:localization_formula}: in particular, $\theta_s(0)=(\sinh s, 0, \cosh s, 0,0)$ for the $(2,2)$-supersymmetric free field and $\theta_s(0)=(\sinh s, 0, 0,0)$ for the $\text{H}^{2|2}$-model. We are now ready to announce the supersymmetric Bayes formula.\footnote{See~\cite{MR2278358} on this terminology, which we borrow here by analogy without using any Bayesian analysis.}
\begin{theo}[Supersymmetric Bayes formula]\label{th:susy_Bayes}
Recall that $\llbracket \cdot \rrbracket$ denotes the supersymmetric free field expectation defined in Section~\ref{subse:supersymmetric_free_field} and above, $\langle\cdot\rangle$ denotes the $\text{H}^{2|2}$ expectation defined in Section~\ref{subse:supersymmetric_hyperbolic_sigma_model}, and $\nu^{W,\bm{z}}_{a}$ denotes the mixing measure defined in Section~\ref{subse:connections_between_vertex_reinforced_jump_processes_and_the_supersymmetric_hyperbolic_sigma_model}. Then the following $\text{H}^{2|2}$ expectations can be realized as supersymmetric free field expectations in random environments: for any smooth bounded function $g$ with rapid decay,
\begin{equation*}
\begin{split}
    &\left\langle\frac{z_a}{z_{\delta}}\int_{\mathbb{R}^{|V|}}g(\widetilde{\bm{x}},\widetilde{\bm{y}},\widetilde{\bm{\xi}},\widetilde{\bm{\eta}},\widetilde{\bm{u}})d\nu^{\widetilde{W},\widetilde{\bm{z}}}_{a}(\widetilde{\bm{u}})\right\rangle_{\widetilde{W},v_{\delta}=\theta_s(0)}\\
    ={}&\int_{\mathbb{R}^{|V|}}\llbracket g(\widetilde{\bm{x}},\widetilde{\bm{y}},\widetilde{\bm{\xi}},\widetilde{\bm{\eta}},\widetilde{\bm{u}})\rrbracket_{\widetilde{A^{\bm{u}}},X_{\delta}=\theta_s(0)} d\nu^{\widetilde{W},\bm{1}}_{a}(\widetilde{\bm{u}}),\\
    &\left\langle\int_{\mathbb{R}^{|V|-1}}g(\bm{x},\bm{y},\bm{\xi},\bm{\eta},\bm{u})d\nu^{W,\bm{z}}_{b}(\bm{u})\right\rangle_{W,v_{\delta}=\theta_s(0)}\\
    ={}&\int_{\mathbb{R}^{|V|-1}}\llbracket g(\bm{x},\bm{y},\bm{\xi},\bm{\eta},\bm{u})\rrbracket_{A^{\bm{u}},X_{\delta}=\theta_s(0)} d\nu^{W,\bm{1}}_{b}(\bm{u}),
\end{split}
\end{equation*}
for any $a\in\widetilde{V},b\in V$ and $s\in\mathbb{R}$.
\end{theo}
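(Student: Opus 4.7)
The plan is to reduce the identity to a pointwise algebraic identity, a Berezin form comparison, and a Fubini swap. Both identities are handled by the same strategy, so I would focus on the first. The cornerstone is the following pointwise identity, verified by direct telescoping using the $\text{H}^{2|2}$ constraint $z_i^2 = 1 + x_i^2 + y_i^2 + 2\xi_i\eta_i$ (equivalently, $v_i \cdot v_j = X_i \cdot X_j - z_iz_j$, where $X_i = (x_i, y_i, \xi_i, \eta_i) \in \mathbb{R}^{2|2}$ is the natural projection of $v_i$):
\begin{equation*}
    \tfrac{1}{2}\widetilde{\bm v}\Delta_{\widetilde W}\widetilde{\bm v} + \tfrac{1}{2}\sum_{(ij) \in E(\widetilde V)}W_{ij}\bigl(e^{u_i - u_j}z_j^2 + e^{u_j - u_i}z_i^2 - 2z_iz_j\bigr) = \tfrac{1}{2}\widetilde{\bm X}\widetilde{A^{\bm u}}\widetilde{\bm X} + \sum_{(ij) \in E(\widetilde V)}W_{ij}\bigl(\cosh(u_i - u_j) - 1\bigr).
\end{equation*}
The LHS combines $\tfrac{1}{2}\widetilde{\bm v}\Delta_{\widetilde W}\widetilde{\bm v} = \sum W_{ij}(z_iz_j - X_iX_j - 1)$ with the bosonic exponent of $d\nu^{\widetilde W, \widetilde{\bm z}}_a$; the RHS combines the SUSY-free-field action in the random environment $\bm u$ (with non-symmetric generator $\widetilde{A^{\bm u}}$ from~\eqref{eq:Definition_AuMatrix_tilded}, unpacked via the shorthand~\eqref{eq:ShortHand_Au_Expectation}) with the bosonic exponent of $d\nu^{\widetilde W, \bm 1}_a$. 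The $-2z_iz_j$ contribution from the mixing exponent cancels the $z_iz_j$ piece of the $\text{H}^{2|2}$-action, while $z_i^2 - X_i^2 = 1$ converts the remaining $z^2$-terms into $X^2$-terms plus the $\cosh(u_i - u_j) - 1$ correction.

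Next I would compare the Berezin integration forms. Viewing $z_i = \sqrt{1 + X_i^2}$ as a function of $X_i$, one has $D\mu(\widetilde{\bm v}) = \prod_{i \in V}\tfrac{1}{z_i}\, D\widetilde{\bm X}$ (product over $V$ only, since $\delta$ is pinned). The factor $z_a/z_\delta$ on the LHS is designed to combine with $\prod_{i \in \widetilde V \setminus \{a\}}z_i$ arising from the density $\prod_{i \neq a}\tfrac{z_i e^{-u_i}du_i}{\sqrt{2\pi}}$ in $d\nu^{\widetilde W, \widetilde{\bm z}}_a$ to give $\prod_{i \in V}z_i$ in both cases $a = \delta$ (where $\prod_{i \neq \delta}z_i = \prod_{i \in V}z_i$ and $z_a/z_\delta = 1$) and $a \in V$ (where the $z_\delta$ from the product cancels with $z_\delta$ in the denominator, using $z_\delta = \cosh s$ from the pinning $v_\delta = \theta_s(0)$). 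This exactly cancels the factor $\prod_{i \in V}\tfrac{1}{z_i}$ from the Berezin form comparison.

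Assembling: inserting the pointwise identity into the LHS Boltzmann weight, applying the Berezin-form cancellation, and regrouping, the integrand factorizes as $d\nu^{\widetilde W, \bm 1}_a(\widetilde{\bm u})$ multiplied by $\mathbf{1}_{\{X_\delta = \theta_s(0)\}}e^{-\tfrac{1}{2}\widetilde{\bm X}\widetilde{A^{\bm u}}\widetilde{\bm X}}D\widetilde{\bm X}$, with the pinning $\widetilde X_\delta = \theta_s(0) = (\sinh s, 0, 0, 0)$ being the natural $\mathbb{R}^{2|2}$-projection of $v_\delta = \theta_s(0)$. Fubini then delivers the RHS of the first identity. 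The second identity is proved by the same strategy in the non-augmented setting, with the Jacobian bookkeeping adapted correspondingly. The main difficulty is the algebraic verification of the pointwise identity, in particular ensuring the fermionic cross-terms $\xi_i\eta_j + \xi_j\eta_i$ match in $v_iv_j$ and $X_iX_j$; once this is established, the correction factor $z_a/z_\delta$ is recognized as precisely the Radon-Nikodym weight needed to make the Berezin-form cancellation balance cleanly, independently of whether $a = \delta$ or $a \in V$.
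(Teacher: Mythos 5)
Your proposal is correct and follows essentially the same route as the paper's own proof: a direct expansion of both the $\text{H}^{2|2}$ Boltzmann weight and the mixing density, cancellation of the $z_iz_j$ terms and use of the constraint $z_i^2=1+X_i^2$ to turn the exponent into $-\tfrac12\widetilde{\bm X}\widetilde{A^{\bm u}}\widetilde{\bm X}$ plus the $\cosh(u_i-u_j)-1$ correction (which is exactly the bosonic exponent of $d\nu^{\widetilde W,\bm 1}_a$), together with the same Berezin-form bookkeeping in which $z_a/z_\delta$ combines with $\prod_{i\neq a}z_i$ to cancel $\prod_{i\in V}z_i^{-1}$. The only cosmetic difference is that you state the exponent cancellation as a standalone pointwise identity with the $\cosh$ form, whereas the paper carries it out inline; the content is identical.
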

Notice that on the right-hand sides in the above displays, the mixing measures are defined with constant initial local times $\bm{1}$, while on the left-hand sides, the local times are random and sampled according to the law of the $z$-coordinate given by the $\text{H}^{2|2}$-model.

\begin{proof}
We prove the first identity and the second one follows similarly. The proof goes by direct computation, which slightly generalizes the proof of the result in~\cite{MR3420510} recalled in Section~\ref{subse:connections_between_vertex_reinforced_jump_processes_and_the_supersymmetric_hyperbolic_sigma_model}. We exploit the cancellations between the action functional of the $\text{H}^{2|2}$-model and the mixing measure of the random environment associated to a vertex jump reinforced process.

First, we expand the supersymmetric expectation $\langle \cdot\rangle$. Denote by $F(\widetilde{\bullet})=\int_{\mathbb{R}^{|V|}}g(\widetilde{\bullet},\widetilde{\bm{u}})d\nu_a^{\widetilde{W},z}(\widetilde{\bm{u}})$ and by the definition of the $\text{H}^{2|2}$ expectation~\eqref{eq:H22_expectation},
\begin{equation*}
\begin{split}
    \left\langle\frac{z_a}{z_{\delta}}\int_{\mathbb{R}^{|V|}}g(\widetilde{\bullet},\widetilde{\bm{u}})d\nu^{\widetilde{W},\widetilde{\bm{z}}}_{a}(\widetilde{\bm{u}})\right\rangle_{\widetilde{W},v_{\delta}=\theta_s(0)}&=\left\langle\frac{z_a}{z_{\delta}}F(\widetilde{\bullet})\right\rangle_{\widetilde{W},v_{\delta}=\theta_s(0)}\\
    &=\int_{(\text{H}^{2|2})^{|V|}}\frac{z_a}{z_{\delta}}F(\widetilde{\bullet})\mathbf{1}_{\{v_\delta=\theta_s(0)\}}e^{-\frac{1}{2}\widetilde{\bm{v}}\Delta_{\widetilde{W}}\widetilde{\bm{v}}}D\mu(\widetilde{\bm{v}}).
\end{split}
\end{equation*}
\begin{small}
Further expanding out the energy term and the Berezin integral form, we get
\begin{equation*}
    \int_{(\text{H}^{2|2})^{|V|}}F(\widetilde{\bullet})\mathbf{1}_{\{v_\delta=\theta_s(0)\}}e^{\sum\limits_{(ij)\in E(\widetilde{V})}W_{ij}(x_ix_j+y_iy_j-z_iz_j+\xi_i\eta_j+\xi_j\eta_i+1)}\prod_{i\in\widetilde{V}\setminus\{a\}}\frac{1}{z_i}\prod_{i\in\widetilde{V}\setminus\{\delta\}}\frac{1}{2\pi}dx_idy_id\xi_id\eta_i.
\end{equation*}
\end{small}

Next, we expand the random environment $d\nu_a^{\widetilde{W},\widetilde{\bm{z}}}(\widetilde{\bm{u}})$, i.e. we write
\begin{small}
\begin{equation*}
    F(\widetilde{\bullet})=\int_{\mathbb{R}^{|V|}}g(\widetilde{\bullet},\widetilde{\bm{u}})\mathbf{1}_{\{u_a=0\}}e^{-\frac{1}{2}\sum\limits_{(ij)\in E(\widetilde{V})}W_{ij}(e^{u_j-u_i}z_i^2+e^{u_i-u_j}z_j^2-2z_iz_j)}\sqrt{D(\widetilde{W},\widetilde{\bm{u}})}\prod_{i\in\widetilde{V}\setminus\{a\}}\frac{z_ie^{-u_i}}{\sqrt{2\pi}}du_i.
\end{equation*}
\end{small}

We now plug the above equality into the penultimate display. Let us first work out the exponent in the exponential: recalling that $z_i^2=x_i^2+y_i^2+2\xi_i\eta_i+1$ by the constraint in the $\text{H}^{2|2}$ coordinates,
\begin{small}
\begin{equation*}
\begin{split}
    &\sum\limits_{(ij)\in E(\widetilde{V})}W_{ij}(x_ix_j+y_iy_j-z_iz_j+\xi_i\eta_j+\xi_j\eta_i+1)-\frac{1}{2}\sum\limits_{(ij)\in E(\widetilde{V})}W_{ij}(e^{u_j-u_i}z_i^2+e^{u_i-u_j}z_j^2-2z_iz_j)\\
    ={}&\sum\limits_{(ij)\in E(\widetilde{V})}W_{ij}(x_ix_j+y_iy_j+\xi_i\eta_j+\xi_j\eta_i+1)-\frac{1}{2}\sum\limits_{(ij)\in E(\widetilde{V})}W_{ij}(e^{u_j-u_i}z_i^2+e^{u_i-u_j}z_j^2)\\
    ={}&-\frac{1}{2}\sum_{(ij)\in E(\widetilde{V})}W_{ij}(e^{u_j-u_i}+e^{u_i-u_j}-2)\\
    &\quad\quad+\sum\limits_{(ij)\in E(\widetilde{V})}W_{ij}(x_ix_j+y_iy_j+\xi_i\eta_j+\xi_j\eta_i)-\frac{1}{2}\sum\limits_{i\in \widetilde{V}}\sum_{k\in\widetilde{V}\setminus\{i\}}W_{ik}e^{u_k-u_i}(x_i^2+y_i^2+2\xi_i\eta_i)\\
    ={}&-\frac{1}{2}\sum_{(ij)\in E(\widetilde{V})}W_{ij}(e^{u_j-u_i}+e^{u_i-u_j}-2)-\frac{1}{2}\widetilde{\bm{X}}\widetilde{A^{\bm{u}}}\widetilde{\bm{X}},
\end{split}
\end{equation*}
\end{small}
where in the first equality, we cancelled the $z_iz_j$ terms; in the second equality, we used the $\text{H}^{2|2}$ constraint $z_i^2=x_i^2+y_i^2+2\xi_i\eta_i+1$; in the third equality, we used the definitions of $X_i=(x_i,y_i,\xi_i,\eta_i)$ with the inner product~\eqref{eq:InnerProduct_SusyFreeField} and $\widetilde{A^{\bm{u}}}$ the matrix defined in~\eqref{eq:Definition_AuMatrix_tilded}.

In short, there is no $z$-component anymore in the exponent in the exponential. Finally,
\begin{equation*}
\begin{split}
    &\left\langle\frac{z_a}{z_{\delta}}\int_{\mathbb{R}^{|V|}}g(\widetilde{\bullet},\widetilde{\bm{u}})d\nu^{\widetilde{W},\bm{z}}_{a}(\widetilde{\bm{u}})\right\rangle_{\widetilde{W},v_{\delta}=\theta_s(0)}\\
    ={}&\int_{(\text{H}^{2|2})^{|V|}}\int_{\mathbb{R}^{|V|}}g(\widetilde{\bullet},\widetilde{\bm{u}})\mathbf{1}_{\{X_{\delta}=\theta_s(0)\}}e^{-\frac{1}{2}\widetilde{\bm{X}}\widetilde{A^{\bm{u}}}\widetilde{\bm{X}}}\prod_{i\in V}\frac{dx_idy_id\xi_id\eta_i}{2\pi}d\nu_{a}^{\widetilde{W},\bm{1}}(\widetilde{\bm{u}})\\
    ={}&\int_{\mathbb{R}^{|V|}}\llbracket g(\widetilde{\bullet},\widetilde{\bm{u}})\rrbracket_{\widetilde{A^{\bm{u}}},X_{\delta}=\theta_s(0)} d\nu^{\widetilde{W},\bm{1}}_{a}(\widetilde{\bm{u}})
\end{split}
\end{equation*}
where the first equality follows from putting all the $u$-dependent terms together to get the mixing measure $d\nu_{a}^{\widetilde{W},\bm{1}}(\widetilde{\bm{u}})$, and the second quality from the discussion at the beginning of this section.
\end{proof}

\section{Isomorphism theorems for the Vertex Reinforced Jump Process}\label{sec:isomorphism_theorems_for_the_vertex_reinforced_jump_process}
We now turn to the isomorphism theorems for the Vertex Reinforced Jump Process, originally discovered in~\cite{MR4021254,MR4255180}. Our formulation is slightly different only due to the different choices of notations but they are essentially equivalent. The main novelty here is that, following the ideas of~\cite{MR3420510}, we demonstrate that these isomorphism theorems can be obtained simply by integrating the standard Markovian isomorphism theorems in a well-chosen random environment generated by the Vertex Reinforced Jump Process. In other words, all theorems below are \emph{annealed} versions of the corresponding standard Markovian isomorphisms recalled in Section~\ref{subse:standard Markovian_isomorphism_theorems}.

We will be using two slightly different expectations below:
\begin{enumerate}
    \item The notation $\mathbb{E}^{\widetilde{W},\widetilde{\bm{\vartheta}}}_{a}$ is associated with a Vertex Reinforced Jump Process on the augmented graph $\widetilde{V}$ (sometimes assumed killed at the root vertex $\delta$), with edge weights $\widetilde{W}$ and initial local times $\widetilde{\bm{\vartheta}}$. For example, if $\widetilde{\bm{\vartheta}}=\bm{1}$, then we start the reinforced process with all initial local times constant equal to $1$.
    \item The notation $\mathbb{E}^{\widetilde{\bm{u}}}_{a}$ is associated with a standard Markovian continuous time jump process on the augmented graph $\widetilde{V}$ (sometimes assumed killed at the root vertex $\delta$) in the environment defined by $\widetilde{\bm{u}}$, with jump rate from $i$ to $j$ equal to $\frac{1}{2}\widetilde{W}_{ij}e^{u_j-u_i}$ as in Lemma~\ref{lemm:mixutre_of_markov_jump_processes} and the discussions after. We omit the edge weights $\widetilde{W}$ in this notation.
\end{enumerate}

\subsection{BFS-Dynkin isomorphism theorem}
The BFS-Dynkin isomorphism theorem for the Vertex Reinforced Jump Process and the $\text{H}^{2|2}$-model is the following identity:
\begin{theo}[Reinforced BFS-Dynkin isomorphism theorem]\label{th:Reinforced_BFS-Dynkin}
Let $(Y_s)_{s\geq 0}$ be a Vertex Reinforced Jump Process on the augmented graph starting at some vertex $a\in V$ and killed when it hits the root vertex $\delta$. Let $\widetilde{\rho}$ be the killing time, and $b=Y_{\widetilde{\rho}^{-}}\in V$ be the vertex $Y$ last visits before $\delta$. Let $\widetilde{\bm{L}}$ be the final local times, i.e. $\widetilde{L}_i=1+\int_{0}^{\widetilde{\rho}}\mathbf{1}_{\{Y_s=i\}}ds$ for all $i\in \widetilde{V}$. Then for any smooth bounded function $g$ with rapid decay,
\begin{equation*}
    \int_{0}^{\infty}\mathbb{E}^{\widetilde{W},\bm{1}}_{a}[g(\widetilde{\bm{L}})\mathbf{1}_{\{Y_{s}=b,s<\widetilde{\rho}\}}]ds=\langle x_ax_bg(\widetilde{\bm{z}})\rangle_{\widetilde{W},v_{\delta}=0},
\end{equation*}
where $\langle\cdot\rangle_{\widetilde{W},v_{\delta}=0}$ is the $\text{H}^{2|2}$ expectation with pinning at $\delta$ defined in Section~\ref{subse:supersymmetric_hyperbolic_sigma_model}.

In particular, with the uniform pinning condition $W_{\delta,i}=h>0$ for all $i\in V$,
\begin{equation*}
    \mathbb{E}^{\widetilde{W},\bm{1}}_{a}[g(\widetilde{\bm{L}})\mathbf{1}_{\{Y_{\widetilde{\rho}^-}=b\}}]=h\langle x_ax_bg(\widetilde{\bm{z}})\rangle_{\widetilde{W},v_{\delta}=0}.
\end{equation*}
\end{theo}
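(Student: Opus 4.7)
The plan is to derive Theorem~\ref{th:Reinforced_BFS-Dynkin} as an annealed version of the standard Markov BFS-Dynkin (Theorem~\ref{theoA:standard_BFSDynkin}), using the supersymmetric Bayes formula (Theorem~\ref{th:susy_Bayes}) as the bridge between annealed supersymmetric free field expectations and $\text{H}^{2|2}$ expectations. By Lemma~\ref{lemm:mixutre_of_markov_jump_processes}, the Vertex Reinforced Jump Process $Y$ on the augmented graph killed at $\delta$ is a mixture of standard Markov jump processes $Z = Y \circ D^{-1}$ indexed by $\widetilde{\bm{u}}\sim d\nu^{\widetilde{W},\bm{1}}_a$, and under the time change the reinforced final local times become $\widetilde{\bm{L}} = \sqrt{\bm{S}(\varrho)+1}$. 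In particular, $g(\widetilde{\bm{L}}) = g\bigl(\sqrt{\bm{S}(\varrho)+1}\bigr)$ matches $g(\widetilde{\bm{z}}) = g\bigl(\sqrt{1+\widetilde{\bm{X}}^2}\bigr)$ under the $\text{H}^{2|2}$-constraint $\widetilde{\bm{z}}^2 = 1 + \widetilde{\bm{X}}^2$.

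Working in the quenched environment $\widetilde{\bm{u}}$, the next step is to derive and apply a supersymmetric version of BFS-Dynkin. I would apply Theorem~\ref{theoA:standard_BFSDynkin} to a single bosonic component of the $(2,2)$-supersymmetric free field with generator $\widetilde{A^{\bm{u}}}$, realized through the shorthand $\llbracket\,\cdot\,\rrbracket_{\widetilde{A^{\bm{u}}}} = \llbracket e^{\widetilde{\bm{u}}}\,\cdot\,\rrbracket_{\widetilde{B^{\bm{u}}}}$ together with the time rescaling relating $Z$ to its symmetric sibling of generator $\widetilde{B^{\bm{u}}}$. Integrating out the remaining bosonic $\bm{y}$ and fermionic $\bm{\xi},\bm{\eta}$ components through the Parisi-Sourlas localization formula (Lemma~\ref{lemm:Parisi-Sourlas}) collapses the $\tfrac{1}{2}\bm{\phi}^2$ term of the Markovian BFS-Dynkin via $\llbracket F(\bm{c}+\tfrac{1}{2}\bm{X}^2)\rrbracket = F(\bm{c})$ (for any constant vector $\bm{c}$), producing a quenched identity of the schematic form
\begin{equation*}
    \mathbb{E}^{Z,\widetilde{\bm{u}}}_a\bigl[g\bigl(\sqrt{\bm{S}(\varrho)+1}\bigr)\,\mathbf{1}_{\{Z_{\varrho^-}=b\}}\bigr] \;=\; C(\widetilde{\bm{u}})\,\widetilde{W}_{b\delta}\,\llbracket x_a x_b\, g\bigl(\sqrt{1+\bm{X}^2}\bigr)\rrbracket_{\widetilde{A^{\bm{u}}},\,X_\delta=0}
\end{equation*}
for an explicit prefactor $C(\widetilde{\bm{u}})$ assembled from exponentials $e^{\pm u_i}$.

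Integrating this quenched identity against $d\nu^{\widetilde{W},\bm{1}}_a(\widetilde{\bm{u}})$, the left-hand side becomes the annealed expectation $\mathbb{E}^{\widetilde{W},\bm{1}}_a[g(\widetilde{\bm{L}})\mathbf{1}_{\{Y_{\widetilde{\rho}^-}=b\}}]$, which equals $\widetilde{W}_{b\delta}$ times the LHS of the theorem since the Vertex Reinforced Jump Process has constant effective killing rate $\widetilde{W}_{b\delta}$ from $b$ (because $L_\delta\equiv 1$, making the instantaneous jump rate $\widetilde{W}_{b\delta}L_\delta$ from $b$ to $\delta$ constant). On the right-hand side, the integral $\int C(\widetilde{\bm{u}})\,\llbracket x_a x_b\,g(\sqrt{1+\widetilde{\bm{X}}^2})\rrbracket_{\widetilde{A^{\bm{u}}},X_\delta=0}\,d\nu^{\widetilde{W},\bm{1}}_a$ is identified with $\widetilde{W}_{b\delta}\,\langle x_a x_b\,g(\widetilde{\bm{z}})\rangle_{\widetilde{W},\,v_\delta=0}$ via formula~(a) of Theorem~\ref{th:susy_Bayes} at $s=0$, where the $z_a/z_\delta$ prefactor (reducing to $z_a$ since $z_\delta=1$ at this pinning) combines precisely with $C(\widetilde{\bm{u}})$.

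The main obstacle will be the careful bookkeeping of the exponential prefactors in $\widetilde{\bm{u}}$ arising from (i) the shorthand conjugation $\widetilde{A^{\bm{u}}}\leftrightarrow\widetilde{B^{\bm{u}}}$, (ii) the time rescaling between $Z$ and its symmetric version, and (iii) the $z_a/z_\delta$ factor in Bayes~(a); these must conspire to cancel in order to produce the clean identity of the theorem. A secondary subtlety is that the supersymmetric BFS-Dynkin used above --- derived by applying Theorem~\ref{theoA:standard_BFSDynkin} to one bosonic component and then invoking Parisi-Sourlas to eliminate the $\tfrac{1}{2}\bm{X}^2$ term --- is not stated verbatim in the excerpt and must be established along the way.
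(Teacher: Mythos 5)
Your proposal is correct and is essentially the paper's own argument: the paper likewise anneals a quenched supersymmetric BFS-Dynkin identity for the twisted generator $\widetilde{A^{\bm{u}}}$ (its Lemma~\ref{lemm:ClassicalSusy_BFSD}, of which your quenched identity with $C(\widetilde{\bm{u}})=e^{u_\delta-u_a}$ is exactly the Parisi--Sourlas-localized form) against the mixing measure, uses the supersymmetric Bayes formula together with the shift identity for $d\nu$ to recover the $\text{H}^{2|2}$ expectation, and passes between the two displays via the constant killing rate $\widetilde{W}_{b\delta}$ just as you do. The only cosmetic differences are the order of operations (the paper first applies localization to randomize the initial local times to $\widetilde{\bm{z}}$ and invokes Bayes twice with the shift lemma at initial local times $\bm{1}$, whereas you localize inside the quenched lemma and invoke Bayes once, absorbing the $z_a/z_\delta$ factor through the shift identity extended to initial local times $\widetilde{\bm{z}}$), and that the promotion of Theorem~\ref{theoA:standard_BFSDynkin} to its supersymmetric form is carried out in the paper by the determinant/Laplace-transform rewriting — Parisi--Sourlas enters only afterwards, to collapse the field dependence on the left-hand side, not to ``integrate out'' the extra components.
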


We prove this theorem by integrating the standard Markovian BFS-Dynkin isomorphism theorem, Theorem~\ref{theoA:standard_BFSDynkin}. We first need a slightly more general supersymmetric version of the standard Markovian BFS-Dynkin isomorphism theorem. The idea of having such supersymmetric forms of the isomorphism theorems can be traced back to Le Jan and Luttinger~\cite{MR941982,MR713539}.
\begin{lemm}[Supersymmetric BFS-Dynkin isomorphism theorem]\label{lemm:ClassicalSusy_BFSD}
Let $Z$ be a standard Markovian jump process on the augmented graph with rate $\frac{1}{2}\widetilde{W}_{ij}e^{u_j-u_i}$ (for some fixed environment $\widetilde{\bm{u}}$) starting at some vertex $a\in V$ and killed at time $\widetilde{\varrho}$ when it hits the root vertex $\delta$. Let $b=Z_{\widetilde{\varrho}^{-}}\in V$ be the vertex $Z$ last visits before it hits $\delta$. Let $\widetilde{\bm{S}}$ be the final local times $S_i=\int_{0}^{\widetilde{\varrho}}\mathbf{1}_{\{Z_t=i\}}dt$. Then with $z_i^2=x_i^2+y_i^2+2\xi_i\eta_i+1$ and any smooth bounded $g$ with rapid decay,
\begin{equation*}
    \left\llbracket\mathbb{E}^{\widetilde{\bm{u}}}_{a}[g(\widetilde{\bm{S}}+\widetilde{\bm{z}}^2)\mathbf{1}_{\{Z_{\widetilde{\varrho}^{-}}=b\}}]\right\rrbracket_{\widetilde{A^{\bm{u}}},X_{\delta}=0}=\widetilde{W}_{b\delta}\left\llbracket e^{u_\delta-u_a}x_ax_b g(\widetilde{\bm{z}}^2)\right\rrbracket_{\widetilde{A^{\bm{u}}},X_{\delta}=0}.
\end{equation*}
where $X=(x_i,y_i,\xi_i,\eta_i)$ be the supersymmetric free field and $\llbracket\cdot \rrbracket_{\widetilde{A^{\bm{u}}},X_{\delta}=0}$ is defined in Section~\ref{sec:a_supersymmetric_bayes_formula}.
\end{lemm}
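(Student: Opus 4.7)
The plan is to derive the lemma as a supersymmetric lift of the scalar BFS-Dynkin (Theorem~\ref{theoA:standard_BFSDynkin}), via a symmetrization of the non-symmetric matrix $\widetilde{A^{\bm u}}$. The key structural observation is that $\widetilde{A^{\bm u}}$ is conjugate through the diagonal $e^{\widetilde{\bm u}}$ to the symmetric matrix $\widetilde{B^{\bm u}}=e^{\widetilde{\bm u}}\widetilde{A^{\bm u}}e^{\widetilde{\bm u}}$, which is nothing but the graph Laplacian with deformed edge weights $W'_{ij}=W_{ij}e^{u_i+u_j}$. On the probabilistic side, the reversible symmetric chain $\bar Z$ with rates $W'_{ij}$ is related to $Z$ by a deterministic time change: $Z$ is obtained from $\bar Z$ by slowing the clock at each vertex $i$ by the factor $2e^{2u_i}$, so that $Z$ and $\bar Z$ share the same trajectory (and exit vertex $b$), while the local times obey $\bar S_i = \tfrac12 e^{-2u_i}S_i$.

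The first step is to establish a symmetric counterpart of the claimed identity, for $\bar Z$ and the $(2,2)$-supersymmetric free field $\bar{\bm X}$ with Laplacian $\widetilde{B^{\bm u}}$:
\[
\llbracket \mathbb{E}^{\bar Z}_a[g(\bar{\bm S}+\tfrac12 \bar{\bm X}\cdot\bar{\bm X})\mathbf{1}_{\{\bar Z_{\bar\varrho^-}=b\}}]\rrbracket_{\widetilde{B^{\bm u}},\bar X_\delta=0}
= W_{b\delta}\, e^{u_b+u_\delta}\,\llbracket \bar x_a \bar x_b\, g(\tfrac12\bar{\bm X}\cdot\bar{\bm X})\rrbracket_{\widetilde{B^{\bm u}},\bar X_\delta=0}.
\]
This is the direct supersymmetric lift of Theorem~\ref{theoA:standard_BFSDynkin}, already used in this form by Le Jan and Luttinger~\cite{MR941982,MR713539}; its proof is the Dynkin integration-by-parts argument carried out in the SUSY setting, relying on the Parisi-Sourlas cancellation (Lemma~\ref{lemm:Parisi-Sourlas}) between bosonic and fermionic Gaussian integrals to produce the correct trivial partition function (Corollary~\ref{coro:TrivialnessPartitionFunction}).

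The second step translates this symmetric identity into the $\widetilde{A^{\bm u}}$-framework via the change of variable $\bar X = e^{-\widetilde{\bm u}} X$ together with the time-change formula $\bar S_i=\tfrac12 e^{-2u_i}S_i$. By the notational convention~\eqref{eq:ShortHand_Au_Expectation}, the expectation $\llbracket\cdot\rrbracket_{\widetilde{B^{\bm u}}}$ rewrites as $\llbracket\cdot\rrbracket_{\widetilde{A^{\bm u}}}$ upon this substitution. Crucially, $\bar S_i$ and $\tfrac12 \bar X_i\cdot\bar X_i$ both acquire the common prefactor $\tfrac12 e^{-2u_i}$ inside $g$, which can therefore be absorbed into the test function. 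On the right-hand side, the Jacobian factor $\bar x_a\bar x_b = e^{-u_a-u_b} x_a x_b$ combines with the symmetric weight $W_{b\delta} e^{u_b+u_\delta}$ to produce the announced factor $\widetilde{W}_{b\delta}\, e^{u_\delta - u_a}$. Finally, the definitional relation $\widetilde{\bm z}^2 = \widetilde{\bm X}\cdot\widetilde{\bm X}+\bm 1$ means that the constant shift by $\bm 1$ is absorbed into one further redefinition of the test function, yielding exactly the claimed identity.

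The main obstacle is the careful bookkeeping of exponential factors through the two successive changes of variable: the sign in the exponent (that $\bar X = e^{-\widetilde{\bm u}} X$ rather than $\bar X = e^{\widetilde{\bm u}} X$) is what ensures that $\bar S_i$ and $\tfrac12 \bar X_i\cdot\bar X_i$ pick up the \emph{same} prefactor $\tfrac12 e^{-2u_i}$; this common prefactor is what allows the rescaling to be absorbed into a single test function, and the identity to simplify to the compact form stated. The ``$+1$'' in $\widetilde{\bm z}^2$ is then just a constant translation, handled by a trivial shift of the test function.
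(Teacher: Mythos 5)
Your proposal is correct and follows essentially the same route as the paper: first the supersymmetric lift of the scalar BFS--Dynkin identity for the symmetric matrix $\widetilde{B^{\bm u}}=e^{\widetilde{\bm u}}\widetilde{A^{\bm u}}e^{\widetilde{\bm u}}$ (equivalently, the deformed weights $W_{ij}e^{u_i+u_j}$), then the transfer to the $\widetilde{A^{\bm u}}$-bracket via the substitution $\bar X=e^{-\widetilde{\bm u}}X$ of~\eqref{eq:ShortHand_Au_Expectation} together with the local-time rescaling~\eqref{eq:Relation_AB_localtimes}, with exactly the bookkeeping $\bar x_a\bar x_b\,W_{b\delta}e^{u_b+u_\delta}=\widetilde W_{b\delta}e^{u_\delta-u_a}x_ax_b$ and absorption of the common prefactor and the $+1$ shift into the test function. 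The only (immaterial) difference is that the paper obtains the symmetric SUSY identity by multiplying the scalar Laplace-transform form of Theorem~\ref{theoA:standard_BFSDynkin} by identical bosonic and fermionic Gaussian factors, whereas you invoke it directly as the known Le Jan--Luttinger supersymmetric form.
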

\begin{proof}
This is done by rewriting some determinants of the standard Markovian BFS-Dynkin isomorphism theorem using Grassmann Gaussian integrals instead of real Gaussian integrals. Namely, recall that for a symmetric real definite positive matrix $B$ of size $|V|$, we have
\begin{equation*}
\begin{split}
    \int_{\mathbb{R}^{|V|}}e^{-\frac{1}{2}\bm{x}B\bm{x}}\prod_{i\in V}\frac{dx_i}{\sqrt{2\pi}}&=\det(B)^{-\frac{1}{2}},\\
    \int e^{-\bm{\xi}B\bm{\eta}}\prod_{i\in V}d\xi_{i}d\eta_{i}&=\det(B).
\end{split}
\end{equation*}

Now recall the standard Markovian BFS-Dynkin isomorphism, following~\cite[Chapter~6]{MR4789605} or~\cite[Chapter~2]{MR2932978}. Consider a standard continuous time Markov jump process $Z$ on the augmented graph with generator $A$, starting at $a\in V$ and killed at the root vertex $\delta$ at time $\varrho$. Let $\widetilde{\bm{S}}$ be the final local times at time $\varrho$, and let $b\in V$ be the last vertex visited by $Z$ before $\delta$. Denote by $H$ the diagonal matrix of size $|V|$ with diagonal entries $(W_{i\delta})_{i\in V}$. The classical BFS-Dynkin isomorphism can be written as an equality between two Laplace transforms with parameter $\bm{\lambda}$:
\begin{equation*}
\begin{split}
    &\int_{\mathbb{R}^{|V|}}\frac{1}{\widetilde{W}_{b\delta}}\mathbb{E}^{A}_{a}[e^{-\langle\bm{\lambda},\bm{S}\rangle}e^{-\frac{1}{2}\langle\bm{\lambda},\bm{x}^2\rangle}\mathbf{1}_{\{Z_{\varrho^{-}}=b\}}]e^{-\frac{1}{2}\bm{x}A\bm{x}}\prod_{i\in V}\frac{dx_i}{\sqrt{2\pi}}\\
    ={}&\int_{\mathbb{R}^{|V|}} x_ax_b e^{-\frac{1}{2}\langle\bm{\lambda},\bm{x}^2\rangle}e^{-\frac{1}{2}\bm{x}A\bm{x}}\prod_{i\in V}\frac{dx_i}{\sqrt{2\pi}}
\end{split}
\end{equation*}
where $\mathbb{E}^{A}_{a}$ denotes the expectation with respect to the process $Z$ starting at $a$ with the relation $\mathbb{E}^{A}_{a}[\bm{\cdot}]=\frac{1}{\widetilde{W}_{b\delta}}\mathbb{E}^{A}_{a,b}[\bm{\cdot}\mathbf{1}_{\{Z_{\varrho^{-}}=b\}}]$. Our discussion above implies equality between a new pair of Laplace transforms with $z_i^2=x_i^2+y_i^2+2\xi_i\eta_i+1$:
\begin{equation*}
\begin{split}
    &\int \frac{1}{\widetilde{W}_{b\delta}}\mathbb{E}^{A}_{a}[e^{-\langle\bm{\lambda},\bm{S}\rangle}e^{-\frac{1}{2}\langle\bm{\lambda},\bm{z}^2\rangle}\mathbf{1}_{\{Z_{\varrho^{-}}=b\}}]e^{-\frac{1}{2}\bm{x}A\bm{x}}e^{-\frac{1}{2}\bm{y}A\bm{y}-\bm{\xi}A\bm{\eta}}\prod_{i\in V}\frac{dx_idy_i}{2\pi}d\xi_{i}d\eta_{i}\\
    ={}&\int x_ax_b e^{-\frac{1}{2}\langle\bm{\lambda},\bm{z}^2\rangle}e^{-\frac{1}{2}\bm{x}A\bm{x}}e^{-\frac{1}{2}\bm{y}A\bm{y}-\bm{\xi}A\bm{\eta}}\prod_{i\in V}\frac{dx_idy_i}{2\pi}d\xi_{i}d\eta_{i},
\end{split}
\end{equation*}
from which follows Lemma~\ref{lemm:ClassicalSusy_BFSD} by considering the generator $\frac{1}{2}e^{-\widetilde{\bm{u}}}\widetilde{A^{\bm{u}}}e^{\widetilde{\bm{u}}}$ instead of $A$ as in the beginning of Section~\ref{sec:a_supersymmetric_bayes_formula}. Since we used some non-conventional notations (only the matrix $\widetilde{B^{\bm{u}}}=e^{\widetilde{\bm{u}}}\widetilde{A^{\bm{u}}}e^{\widetilde{\bm{u}}}$ is symmetric thus can be used to define simultaneously a Markov process and a Gaussian free field), we explain this last step with some further details (which is basically a change of variables and does not involve any supersymmetric calculation).

Start by applying the previous discussion with $A=\widetilde{B^{\bm{u}}}$, which yields
\begin{equation*}
    \left\llbracket\mathbb{E}^{\widetilde{\bm{u}}}_{a}\left[g\left(\widetilde{\bm{S}}+\frac{1}{2}\widetilde{\bm{z}}^2\right)\mathbf{1}_{\{Z_{\widetilde{\varrho}^{-}}=b\}}\right]\right\rrbracket_{\widetilde{B^{\bm{u}}},X_{\delta}=0}=\widetilde{W}_{b\delta}e^{u_\delta+u_b}\left\llbracket x_ax_b g\left(\frac{1}{2}\widetilde{\bm{z}}^2\right)\right\rrbracket_{\widetilde{B^{\bm{u}}},X_{\delta}=0}.
\end{equation*}

Recall from~\eqref{eq:Definition_AuMatrix_tilded} that $\llbracket F(X)\rrbracket_{\widetilde{A^{\bm{u}}}}=\llbracket F(e^{\widetilde{\bm{u}}}X)\rrbracket_{\widetilde{B^{\bm{u}}}}$, and the Markov process with generator $\frac{1}{2}e^{-\widetilde{\bm{u}}}\widetilde{A^{\bm{u}}}e^{\widetilde{\bm{u}}}$ and $\widetilde{B^{\bm{u}}}$ only differs by a factor $\frac{1}{2}e^{-2\widetilde{\bm{u}}}$, which means that they are equivalent up to a change of local time scale, i.e.
\begin{equation}\label{eq:Relation_AB_localtimes}
    S^{(\widetilde{B^{\bm{u}}})}_i=\frac{1}{2}e^{-2u_i}\cdot S^{(\frac{1}{2}e^{-\widetilde{\bm{u}}}\widetilde{A^{\bm{u}}}e^{\widetilde{\bm{u}}})}_i.
\end{equation}
Combining we get
\begin{equation*}
\begin{split}
    &\left\llbracket\mathbb{E}^{\widetilde{\bm{u}}}_{a}\left[g\left(\frac{1}{2}e^{-2\widetilde{\bm{u}}}\widetilde{\bm{S}}+\frac{1}{2}e^{-2\widetilde{\bm{u}}}\widetilde{\bm{z}}^2\right)\mathbf{1}_{\{Z_{\widetilde{\varrho}^{-}}=b\}}\right]\right\rrbracket_{\widetilde{A^{\bm{u}}},X_{\delta}=0}\\
    ={}&\widetilde{W}_{b\delta}e^{u_\delta+u_b}\left\llbracket e^{-u_a-u_b}x_ax_b g\left(\frac{1}{2}e^{-2\widetilde{\bm{u}}}\widetilde{\bm{z}}^2\right)\right\rrbracket_{\widetilde{A^{\bm{u}}},X_{\delta}=0}.
\end{split}
\end{equation*}
Rescaling the function $g$ by the factor $\frac{1}{2}e^{-2\widetilde{\bm{u}}}$ concludes the proof of the lemma.
\end{proof}

\begin{proof}[Proof of Theorem~\ref{th:Reinforced_BFS-Dynkin}]
Recall the relation between the original Vertex Reinforced Jump Process $(Y_s)_{s\geq 0}$ and the associated time-changed process $(Z_t)_{t\geq 0}$ described in Section~\ref{subse:vertex_reinforced_jump_process}. Denote by $\widetilde{\rho}$ the killing time for $Y$ and $\widetilde{\varrho}$ the killing time for $Z$. With $S_i=D(L_i)$ the final local times for the process $Z$ (where $D$ is the time-change function in~\eqref{eq:changed_local_times} of Section~\ref{subse:vertex_reinforced_jump_process}), we have
\begin{equation*}
\begin{split}
    \mathbb{E}^{\widetilde{W},\bm{1}}_{a}[g(\widetilde{\bm{L}})\mathbf{1}_{\{Y_{\widetilde{\rho}^{-}}=b\}}]&=\left\langle z_a\mathbb{E}^{\widetilde{W},\widetilde{\bm{z}}}_a[g(\widetilde{\bm{L}})\mathbf{1}_{\{Y_{\widetilde{\rho}^{-}}=b\}}]\right\rangle_{\widetilde{W},v_{\delta}=0}\\
    &=\left\langle z_a\int_{\mathbb{R}^{|V|}}\mathbb{E}^{\widetilde{\bm{u}}}_a[g(\sqrt{\widetilde{\bm{S}}+\widetilde{\bm{z}}^2})\mathbf{1}_{\{Z_{\widetilde{\varrho}^{-}}=b\}}]d\nu^{\widetilde{W},\widetilde{\bm{z}}}_a(\widetilde{\bm{u}})\right\rangle_{\widetilde{W},v_{\delta}=0}\\
    &=\int_{\mathbb{R}^{|V|}}\left\llbracket\mathbb{E}^{\widetilde{\bm{u}}}_{a}[g(\sqrt{\widetilde{\bm{S}}+\widetilde{\bm{z}}^2})\mathbf{1}_{\{Z_{\widetilde{\varrho}^{-}}=b\}}]\right\rrbracket_{\widetilde{A^{\bm{u}}},X_{\delta}=0} d\nu^{\widetilde{W},\bm{1}}_a(\widetilde{\bm{u}}),
\end{split}
\end{equation*}
where the first equality follows from the localization formula of Lemma~\ref{lemm:Parisi-Sourlas}, the second equality from the random environment representation of the $\text{H}^{2|2}$ expectation recalled in Section~\ref{subse:connections_between_vertex_reinforced_jump_processes_and_the_supersymmetric_hyperbolic_sigma_model}, and the third equality from the supersymmetric Bayes formula of Theorem~\ref{th:susy_Bayes}.

Now, when the environment $\widetilde{\bm{u}}$ is fixed, with the above Lemma~\ref{lemm:ClassicalSusy_BFSD} for the standard Markovian supersymmetric isomorphism theorem, we can further write
\begin{equation*}
\begin{split}
    \mathbb{E}^{\widetilde{W},\bm{1}}_{a}[g(\widetilde{\bm{L}})\mathbf{1}_{\{Y_{\widetilde{\rho}^{-}}=b\}}]&=\widetilde{W}_{b\delta}\int_{\mathbb{R}^{|V|}}\left\llbracket e^{u_\delta-u_a} x_ax_b g(\widetilde{\bm{z}})\right\rrbracket_{\widetilde{A^{\bm{u}}},X_{\delta}=0} d\nu^{\widetilde{W},\bm{1}}_a(\widetilde{\bm{u}})\\
    &=\widetilde{W}_{b\delta}\int_{\mathbb{R}^{|V|}}\left\llbracket x_ax_b g(\widetilde{\bm{z}})\right\rrbracket_{\widetilde{A^{\bm{u}}},X_{\delta}=0} d\nu^{\widetilde{W},\bm{1}}_\delta(\widetilde{\bm{u}})\\
    &=\widetilde{W}_{b\delta}\left\langle x_ax_b g(\widetilde{\bm{z}})\int_{\mathbb{R}^{|V|}}d\nu^{\widetilde{W},\widetilde{\bm{z}}}_\delta(\widetilde{\bm{u}})\right\rangle_{\widetilde{W},v_{\delta}=0}\\
    &=\widetilde{W}_{b\delta}\langle x_ax_b g(\widetilde{\bm{z}})\rangle_{\widetilde{W},v_{\delta}=0},
\end{split}
\end{equation*}
where the second equality follows from the shift equation~\eqref{eq:change_of_starting_point_mixing_measure}, the third equality uses again the supersymmetric Bayes formula of Theorem~\ref{th:susy_Bayes}, and the last equality from that the mixing measure $d\nu^{\widetilde{W},\widetilde{\bm{z}}}_\delta(\widetilde{\bm{u}})$ integrates to $1$.

We finish the proof by integrating the above identity over $t$. The case with uniform pinning $\widetilde{W}_{\delta i}=h$ for all $i\in V$ follows from that $\widetilde{\rho}$ is an independent exponential random variable of parameter $h>0$.
\end{proof}

\subsection{Generalized second Ray-Knight theorem}
The same philosophy applies to the generalized second Ray-Knight theorem for the Vertex Reinforced Jump Process. The same steps as in the proof of Theorem~\ref{th:Reinforced_BFS-Dynkin} apply.
\begin{theo}[Reinforced generalized second Ray-Knight theorem]\label{theo:Reinforced_RayKnight}
Consider a Vertex Reinforced Jump Process $(Y_s)_{s\geq 0}$ starting at some vertex $a$ on the graph $V$ (without killing) and initial local times $\bm{1}$. Let $\gamma>1$ and let $\tau(\gamma)=\inf\{t>0~;~L_a(t)>\gamma\}$ be the first instant when the local time at $a$ exceeds $\gamma$. Then for any $s\in\mathbb{R}$ and any smooth bounded function $g$ with rapid decay,
\begin{equation*}
    \mathbb{E}^{W,\bm{1}}_{a}[g(\bm{L}(\tau(\cosh(s))))]=\langle g(\theta_s(\bm{z}))\rangle_{W,v_{a}=0}
\end{equation*}
where the right hand side is the $\text{H}^{2|2}$ expectation with pinning condition at $a$ and $\theta_s$ is the Lorentz boost~\eqref{eq:LorentzBoost}.
\end{theo}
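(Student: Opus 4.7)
The plan is to mirror the proof of Theorem~\ref{th:Reinforced_BFS-Dynkin}: unfold the reinforced process through its random-environment representation, apply a quenched supersymmetric Ray-Knight identity, and recombine via the supersymmetric Bayes formula (Theorem~\ref{th:susy_Bayes}). The new ingredient is the role of the Lorentz boost $\theta_s$ defined in~\eqref{eq:LorentzBoost}, which plays in $\text{H}^{2|2}$ the role of the additive shift $\sqrt{2\gamma}$ in the standard Markovian Ray-Knight theorem (Theorem~\ref{theoA:standard_RayKnight}).

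Concretely, I would first use the localization formula for $\text{H}^{2|2}$ with pinning at $a$ (which evaluates a test function of $\bm{z}$ at $\bm{z}=\bm{1}$, since $z_i=1$ at the origin) to write
\begin{equation*}
    \mathbb{E}^{W,\bm{1}}_{a}\!\left[g(\bm{L}(\tau(\cosh s)))\right]=\left\langle \mathbb{E}^{W,\bm{z}}_{a}\!\left[g(\bm{L}(\tau(\cosh s)))\right]\right\rangle_{W,v_{a}=0},
\end{equation*}
then apply Lemma~\ref{lemm:mixutre_of_markov_jump_processes} to express the inner reinforced expectation as a mixture of Markov chains in a random environment $\bm{u}$. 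Since pinning at $a$ forces $z_{a}=1$, the time change $L_{i}^{2}=S_{i}+z_{i}^{2}$ turns the stopping condition $L_{a}=\cosh s$ into $\sigma=\inf\{t:S_{a}(t)=\sinh^{2}s\}$. The second identity of Theorem~\ref{th:susy_Bayes} with $b=\delta=a$ and Lorentz parameter $0$ then yields
\begin{equation*}
    \mathbb{E}^{W,\bm{1}}_{a}\!\left[g(\bm{L}(\tau(\cosh s)))\right]=\int_{\mathbb{R}^{|V|-1}}\!\left\llbracket \mathbb{E}^{\bm{u}}_{a}\!\left[g\!\left(\sqrt{\bm{S}(\sigma)+\bm{z}^{2}}\right)\right]\right\rrbracket_{A^{\bm{u}},X_{a}=0} d\nu^{W,\bm{1}}_{a}(\bm{u}).
\end{equation*}

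Next I would establish a quenched supersymmetric Ray-Knight identity. Starting from Theorem~\ref{theoA:standard_RayKnight} applied to the symmetric generator $B^{\bm{u}}=e^{\bm{u}}A^{\bm{u}}e^{\bm{u}}$ and tensorizing into a bosonic pair $(x,y)$ and a fermionic pair $(\xi,\eta)$ as in the proof of Lemma~\ref{lemm:ClassicalSusy_BFSD}, the scalar shift $\sqrt{2\gamma}$ of the standard Ray-Knight becomes an additive shift of the $x$-coordinate. The local-time rescaling~\eqref{eq:Relation_AB_localtimes}, combined with $u_{a}=0$ (since the mixing measure is based at $a$), identifies this shift with $\sinh s$, that is, with the $x$-component of $\theta_{s}(0)$. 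Since $B^{\bm{u}}\bm{1}=0$, this additive shift of $x$ is equivalent to moving the pinning from $X_{a}=0$ to $X_{a}=\theta_{s}(0)$, yielding
\begin{equation*}
    \left\llbracket \mathbb{E}^{\bm{u}}_{a}\!\left[g\!\left(\sqrt{\bm{S}(\sigma)+\bm{z}^{2}}\right)\right]\right\rrbracket_{A^{\bm{u}},X_{a}=0}=\left\llbracket g(\bm{z})\right\rrbracket_{A^{\bm{u}},X_{a}=\theta_{s}(0)}.
\end{equation*}
Plugging this into the previous display and applying the Bayes formula in reverse (now with Lorentz parameter $s$) collapses the mixing integral to $\langle g(\bm{z})\rangle_{W,v_{a}=\theta_{s}(0)}$, since the mixing measure is a probability. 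Lorentz invariance of the $\text{H}^{2|2}$ Berezin form $D\mu$ and of the action $\bm{v}\Delta_{W}\bm{v}$, via the change of variables $\bm{v}=\theta_{-s}(\bm{v}')$, then translates the pinning back to $v_{a}=0$ and produces the desired $\langle g(\theta_{s}(\bm{z}))\rangle_{W,v_{a}=0}$.

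The main obstacle is the quenched supersymmetric Ray-Knight identity. Three points must be dovetailed: (i) the bosonic shift $\phi\to\phi+\sqrt{2\gamma}$ of the scalar Ray-Knight tensorizes, in the supersymmetric setting, to an additive shift of $x$ by $\sqrt{2\gamma}$ that is compatible with the $\text{H}^{2|2}$ Lorentz action $\theta_{s}$; (ii) the equivalence of this $x$-shift with a change of pinning from $X_{a}=0$ to $X_{a}=\theta_{s}(0)$ relies on $B^{\bm{u}}\bm{1}=0$, which holds for the symmetrized generator but must be carried through the shorthand $\llbracket\cdot\rrbracket_{A^{\bm{u}}}$ defined in~\eqref{eq:ShortHand_Au_Expectation}; and (iii) the local-time rescaling between $B^{\bm{u}}$ and the quenched VRJP generator $\frac{1}{2}e^{-\bm{u}}A^{\bm{u}}e^{\bm{u}}$ must correctly identify the stopping parameter $\sinh^{2}s$ for $S_{a}$ with the Lorentz shift $\sinh s$ at the base point (using $u_{a}=0$).
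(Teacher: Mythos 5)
Your proposal is correct and follows the paper's proof essentially step for step: localization with pinning at $a$ (so $z_a=1$), the time change $L_i=\sqrt{S_i+z_i^2}$ turning $\tau(\cosh s)$ into $\sigma(\sinh^2 s)$, the mixture-of-Markov-chains representation, the supersymmetric Bayes formula (Theorem~\ref{th:susy_Bayes}) applied in both directions, the quenched supersymmetric free field Ray--Knight identity (which the paper states as a lemma with proof omitted, and which you sketch in more detail via the shift-versus-pinning argument and the local-time rescaling), and finally Lorentz invariance. The only quibble is a trivial sign slip at the end: to move the pinning $v_a=\theta_s(0)$ back to $v_a=0$ the substitution should be $\bm{v}=\theta_s(\bm{v}')$ rather than $\bm{v}=\theta_{-s}(\bm{v}')$.
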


As a preliminary, we need the following supersymmetric free field version of the generalized second Ray-Knight theorem:
\begin{lemm}[Supersymmetric free field generalized second Ray-Knight theorem]
Consider a Markov jump process $Z$ starting at vertex $a$ on the graph $V$ with generator $\frac{1}{2}e^{-\bm{u}}A^{\bm{u}}e^{\bm{u}}$, where $\bm{u}$ is some fixed environment and $A^{\bm{u}}$ is the matrix~\eqref{eq:Definition_AuMatrix}. Let $\gamma>0$ and $\sigma(\gamma)=\inf\{t>0~;~S_a(t)>\gamma\}$, where $\bm{S}$ denotes the local times (with initial local times $\bm{0}$). Then for any $s\in\mathbb{R}$ and any smooth bounded function $g$ with rapid decay,
\begin{equation*}
    \llbracket \mathbb{E}^{\bm{u}}_a[g(\bm{S}(\sigma(\sinh^2(s)))+\bm{z}^2)]\rrbracket_{A^{\bm{u}},X_{a}=0}=\llbracket g(\bm{z}^2)\rrbracket_{A^{\bm{u}},X_{a}=\theta_s(0)},
\end{equation*}
where $\llbracket\cdot\rrbracket_{A^{\bm{u}},X_{a}=\star}$ is the expectation with respect to the supersymmetric free field $(X_i)_{i\in V}=((x_i,y_i,\xi_i,\eta_i))_{i\in V}$ twisted by $A^{\bm{u}}$ with pinning condition (defined at the beginning of Section~\ref{sec:a_supersymmetric_bayes_formula}) and $z_i^2=x_i^2+y_i^2+2\xi_i\eta_i+1$ for all $i\in V$.
\end{lemm}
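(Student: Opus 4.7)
The strategy mirrors the proof of the supersymmetric BFS-Dynkin isomorphism (Lemma~\ref{lemm:ClassicalSusy_BFSD}): I would lift the standard Markovian generalized second Ray-Knight theorem (Theorem~\ref{theoA:standard_RayKnight}) to the supersymmetric free field setting by adjoining an extra bosonic coordinate $y$ and a Grassmann pair $(\xi,\eta)$, and then transfer from the symmetric generator $B^{\bm{u}}=e^{\bm{u}}A^{\bm{u}}e^{\bm{u}}$ to the non-symmetric generator $G=\tfrac{1}{2}e^{-\bm{u}}A^{\bm{u}}e^{\bm{u}}$ via the shorthand \eqref{eq:ShortHand_Au_Expectation} and the local-time rescaling \eqref{eq:Relation_AB_localtimes}.

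First I apply Theorem~\ref{theoA:standard_RayKnight} to the Markov jump process with symmetric generator $B^{\bm{u}}$ (a short computation gives $B^{\bm{u}}\bm{1}=0$, so it is a bona fide symmetric rate matrix on $V$), with pinning $\phi_a=0$ and stopping level $\gamma_B>0$. Because $B^{\bm{u}}\bm{1}=0$, the shift $\bm{\phi}\mapsto\bm{\phi}+\sqrt{2\gamma_B}\,\bm{1}$ leaves the quadratic form $\bm{\phi}B^{\bm{u}}\bm{\phi}$ invariant and only modifies the pinning from $\phi_a=0$ to $\phi_a=\sqrt{2\gamma_B}$, so the standard identity may be rewritten as an equality between two scalar Gaussian free field expectations with different pinning values. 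Passing to the Laplace transform in a parameter $\bm{\lambda}$ and multiplying both sides by the trivial bosonic and fermionic Gaussian integrals against $\tfrac{1}{2}\bm{y}(B^{\bm{u}}+\bm{\lambda})\bm{y}$ and $\bm{\xi}(B^{\bm{u}}+\bm{\lambda})\bm{\eta}$ (both pinned to zero at $a$), the determinant factors $\det(B^{\bm{u}}+\bm{\lambda})^{\pm 1/2}$ and $\det(B^{\bm{u}}+\bm{\lambda})$ cancel by the same localization mechanism as in Lemma~\ref{lemm:ClassicalSusy_BFSD}, and inverting the Laplace transform yields
\begin{equation*}
\llbracket\mathbb{E}^{B^{\bm{u}}}_a\bigl[g\bigl(\bm{S}^{(B)}+\tfrac{1}{2}\bm{\zeta}^2\bigr)\bigr]\rrbracket_{B^{\bm{u}},\,X_a=0}=\llbracket g\bigl(\tfrac{1}{2}\bm{\zeta}^2\bigr)\rrbracket_{B^{\bm{u}},\,X_a=(\sqrt{2\gamma_B},\,0,\,0,\,0)},
\end{equation*}
where $\bm{\zeta}^2:=\bm{x}^2+\bm{y}^2+2\bm{\xi\eta}$, the Grassmann coordinates remain pinned to zero (there being no meaningful additive shift for anticommuting variables), and $\bm{S}^{(B)}$ denotes the $B^{\bm{u}}$-local times stopped when $S^{(B)}_a$ first exceeds $\gamma_B$.

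It remains to translate into the $A^{\bm{u}}$-notation. Using $\llbracket F(X)\rrbracket_{A^{\bm{u}}}=\llbracket F(e^{\bm{u}}X)\rrbracket_{B^{\bm{u}}}$ together with $\bm{\zeta}^2(e^{\bm{u}}X)=e^{2\bm{u}}\bm{\zeta}^2(X)$ and $\bm{S}^{(B)}=\tfrac{1}{2}e^{-2\bm{u}}\bm{S}^{(G)}$, the argument of $g$ on the statement's left-hand side becomes $\bm{S}^{(G)}+\bm{z}^2(e^{\bm{u}}X)=2e^{2\bm{u}}\bigl(\bm{S}^{(B)}+\tfrac{1}{2}\bm{\zeta}^2(X)\bigr)+\bm{1}$, and the same vertex-wise affine reparametrisation of $\tfrac{1}{2}\bm{\zeta}^2$ appears on the right-hand side. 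A single redefinition of the test function $g$ therefore converts the $B^{\bm{u}}$-identity above into the claimed identity in $A^{\bm{u}}$-notation. Finally, since $\sigma(\sinh^2(s))$ in the statement refers to the $G$-local times with $u_a=0$, the corresponding $B^{\bm{u}}$-level is $\gamma_B=\tfrac{1}{2}\sinh^2(s)$, so $\sqrt{2\gamma_B}=|\sinh(s)|$; by the symmetry $x\mapsto -x$ of the supersymmetric free field (under which $\bm{z}^2$ is invariant), the sign is irrelevant, and the pinning $X_a=(\sinh(s),0,0,0)=\theta_s(0)$ emerges as required. The main obstacle, I expect, is precisely this bookkeeping of constants — the factors of $\tfrac{1}{2}$ and $2$ from the definition of $G$, the $e^{\pm 2\bm{u}}$ factors from the $A^{\bm{u}}\leftrightarrow B^{\bm{u}}$ change of variable, and the additive $\bm{1}$ in $\bm{z}^2=\bm{\zeta}^2+\bm{1}$ — which must all conspire into one and the same affine reparametrisation on both sides so that a single redefinition of $g$ recovers the clean form of the statement.
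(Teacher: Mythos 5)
Your proposal is correct and follows exactly the route the paper intends: the paper omits this proof with the remark that it is "similar to that of Lemma~\ref{lemm:ClassicalSusy_BFSD}", i.e.\ apply the standard Markovian Ray--Knight theorem (Theorem~\ref{theoA:standard_RayKnight}) to the symmetric generator $B^{\bm{u}}=e^{\bm{u}}A^{\bm{u}}e^{\bm{u}}$, supersymmetrize by adjoining the $y$ and $(\xi,\eta)$ factors so the determinants cancel, and transfer to the $A^{\bm{u}}$-bracket via~\eqref{eq:ShortHand_Au_Expectation} and the local-time rescaling~\eqref{eq:Relation_AB_localtimes}. Your bookkeeping (the observation $B^{\bm{u}}\bm{1}=0$ turning the additive shift $\sqrt{2\gamma_B}$ into the pinning $X_a=\theta_s(0)$, the affine reparametrisation $w\mapsto 2e^{2\bm{u}}w+\bm{1}$ absorbed into $g$, and the sign symmetry $x\mapsto -x$) checks out, so this is essentially the paper's argument written out in full.
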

The proof is similar to that of Lemma~\ref{lemm:ClassicalSusy_BFSD} (by rewriting determinants in the original generalized second Ray-Knight theorem (see Theorem~\ref{theoA:standard_RayKnight}) arising from the Gaussian free field as fermionic integration) and is omitted.

\begin{proof}[Proof of Theorem~\ref{theo:Reinforced_RayKnight}]
The proof is very similar to that of Theorem~\ref{th:Reinforced_BFS-Dynkin}.

Start with the supersymmetric localization formula,
\begin{equation*}
    \mathbb{E}^{W,\bm{1}}_{a}[g(\bm{L}(\tau(\cosh(s))))]=\left\langle\mathbb{E}^{W,\bm{z}}_{a}[g(\bm{L}(\tau(\cosh(s))))]\right\rangle_{W,v_{a}=0}
\end{equation*}
then the relation $L_i(\tau(\cosh(s)))=\sqrt{S_i(\sigma(\sinh^2(s)))+z_i^2}$,
\begin{equation*}
    \left\langle\mathbb{E}^{W,\bm{z}}_{a}[g(\bm{L}(\tau(\cosh(s))))]\right\rangle_{W,v_{a}=0}=\left\langle\mathbb{E}^{W,\bm{z}}_{a}[g(\sqrt{\bm{S}(\sigma(\sinh^2(s)))+\bm{z}^2})]\right\rangle_{W,v_{a}=0}
\end{equation*}
and the interpretation of the Vertex Reinforced Jump Process as a standard Markovian jump process in random environment:
\begin{equation*}
\begin{split}
    &\left\langle\mathbb{E}^{W,\bm{z}}_{a}[g(\sqrt{\bm{S}(\sigma(\sinh^2(s)))+\bm{z}^2})]\right\rangle_{W,v_{a}=0}\\
    ={}&\left\langle\int_{\mathbb{R}^{|V|-1}}\mathbb{E}^{W,\bm{z}}_{a}[g(\sqrt{\bm{S}(\sigma(\sinh^2(s)))+\bm{z}^2})]d\nu^{W,\bm{z}}_{a}(\bm{u})\right\rangle_{W,v_{a}=0}.
\end{split}
\end{equation*}

From this we apply the supersymmetric Bayes formula of Theorem~\ref{th:susy_Bayes},
\begin{equation*}
\begin{split}
    &\left\langle\int_{\mathbb{R}^{|V|-1}}\mathbb{E}^{W,\bm{z}}_{a}[g(\sqrt{\bm{S}(\sigma(\sinh^2(s)))+\bm{z}^2})]d\nu^{W,\bm{z}}_{a}(\bm{u})\right\rangle_{W,v_{a}=0}\\
    ={}&\int_{\mathbb{R}^{|V|-1}}\left\llbracket\mathbb{E}^{W,\bm{z}}_{a}[g(\sqrt{\bm{S}(\sigma(\sinh^2(s)))+\bm{z}^2})]\right\rrbracket_{A^{\bm{u}},X_{a}=0}d\nu^{W,\bm{1}}_{a}(\bm{u}),
\end{split}
\end{equation*}
the supersymmetric free field version of the generalized second Ray-Knight theorem above,
\begin{equation*}
\begin{split}
    &\int_{\mathbb{R}^{|V|-1}}\left\llbracket\mathbb{E}^{W,\bm{z}}_{a}[g(\sqrt{\bm{S}(\sigma(\sinh^2(s)))+\bm{z}^2})]\right\rrbracket_{A^{\bm{u}},X_{a}=0}d\nu^{W,\bm{1}}_{a}(\bm{u})\\
    ={}&\int_{\mathbb{R}^{|V|-1}}\llbracket g(\bm{z})\rrbracket_{A^{\bm{u}},X_{a}=\theta_s(0)}d\nu^{W,\bm{1}}_{a}(\bm{u}),
\end{split}
\end{equation*}
and supersymmetric Bayes back to the $\text{H}^{2|2}$ expectation using again Theorem~\ref{th:susy_Bayes},
\begin{equation*}
    \int_{\mathbb{R}^{|V|-1}}\llbracket g(\bm{z})\rrbracket_{A^{\bm{u}},X_{a}=\theta_s(0)}d\nu^{W,\bm{1}}_{a}(\bm{u})=\left\langle g(\bm{z})\int_{\mathbb{R}^{|V|-1}}d\nu^{W,\bm{z}}_{a}(\bm{u})\right\rangle_{A^{\bm{u}},X_{a}=\theta_s(0)},
\end{equation*}
and as $d\nu^{W,\bm{z}}_{a}(\bm{u})$ integrates to $1$ for each $\bm{z}$,
\begin{equation*}
    \left\langle g(\bm{z})\int_{\mathbb{R}^{|V|-1}}d\nu^{W,\bm{z}}_{a}(\bm{u})\right\rangle_{A^{\bm{u}},X_{a}=\theta_s(0)}=\langle g(\bm{z})\rangle_{A^{\bm{u}},X_{a}=\theta_s(0)}=\langle g(\theta_s(\bm{z}))\rangle_{A^{\bm{u}},X_{a}=0}
\end{equation*}
where we finish with the Lorentz boost invariance~\eqref{eq:LorentzBoost}.
\end{proof}

\subsection{Eisenbaum's isomorphism theorem}
Finally, we obtain a reinforced version of Eisenbaum's isomorphism theorem following the same lines. Using the same notations as in the reinforced BFS-Dynkin isomorphism, Theorem~\ref{th:Reinforced_BFS-Dynkin}, the reinforced Eisenbaum's isomorphism theorem takes the following form:
\begin{theo}[Reinforced Eisenbaum's isomorphism theorem]\label{theo:Reinforced_Eisenbaum}
Consider a Vertex Reinforced Jump Process $(Y_s)_{s\geq 0}$ on the augmented graph $\widetilde{V}$ with initial local times $\bm{z}$, starting at some vertex $a\in V$ and killed at the root vertex $\delta$ at time $\rho$. If $\widetilde{\bm{L}}$ denotes the final local times at instant $\rho$, then for any $s\in\mathbb{R}$ and any smooth bounded function $g$ with rapid decay,
\begin{equation*}
    \left\langle \frac{z_a}{z_\delta}\mathbb{E}^{\widetilde{W},\widetilde{\bm{z}}}_a[g(\widetilde{\bm{L}})]\right\rangle_{\widetilde{W},v_{\delta}=\theta_s(0)}=\left\langle \frac{x_a}{x_{\delta}}g(\widetilde{\bm{z}})\right\rangle_{\widetilde{W},v_{\delta}=\theta_s(0)},
\end{equation*}
where the expectation on both sides are the $\text{H}^{2|2}$ expectation with pinning condition $v_{\delta}=\theta_s(0)$ at the root vertex $\delta$ and $\theta_s$ is the Lorentz boost~\eqref{eq:LorentzBoost}.
\end{theo}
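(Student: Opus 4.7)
My plan is to prove Theorem~\ref{theo:Reinforced_Eisenbaum} following the exact same annealed template as for the reinforced BFS-Dynkin (Theorem~\ref{th:Reinforced_BFS-Dynkin}) and the reinforced second Ray-Knight (Theorem~\ref{theo:Reinforced_RayKnight}) theorems: interpret the VRJP as a mixture of quenched standard Markovian jump processes, integrate a supersymmetric free field version of the classical Eisenbaum identity against the random environment, and use the supersymmetric Bayes formula (Theorem~\ref{th:susy_Bayes}) to convert back and forth between the $\text{H}^{2|2}$ expectation and the supersymmetric free field expectation.

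The one new input I need is a supersymmetric free field counterpart of Theorem~\ref{theoA:standard_Eisenbaum}, by analogy with Lemma~\ref{lemm:ClassicalSusy_BFSD}. Its statement should read: for a quenched Markov jump process with generator $\tfrac{1}{2}e^{-\widetilde{\bm u}}\widetilde{A^{\bm u}}e^{\widetilde{\bm u}}$ on $\widetilde V$, killed at $\delta$, with final local times $\widetilde{\bm S}$, and for $z_i^2=1+x_i^2+y_i^2+2\xi_i\eta_i$,
\begin{equation*}
    \left\llbracket\mathbb{E}^{\widetilde{\bm u}}_{a}\bigl[g(\widetilde{\bm S}+\widetilde{\bm z}^{2})\bigr]\right\rrbracket_{\widetilde{A^{\bm u}},\,X_\delta=\theta_s(0)}
    = \left\llbracket e^{u_\delta-u_a}\,\frac{x_a}{x_\delta}\, g(\widetilde{\bm z}^{2})\right\rrbracket_{\widetilde{A^{\bm u}},\,X_\delta=\theta_s(0)} .
\end{equation*}
The derivation mirrors the proof of Lemma~\ref{lemm:ClassicalSusy_BFSD}: start from the scalar Eisenbaum identity with pinning $\phi_\delta=0$ and real shift $s$, rewrite the Gaussian determinant by introducing a companion bosonic coordinate $y$ and a fermionic pair $\xi,\eta$ whose integrations cancel, so that $(\phi+s)^2$ becomes $(x+\sinh s)^2+y^2+2\xi\eta$; this is precisely the squared norm of $X$ relative to the boundary value $\theta_s(0)$, and combined with the shift in the $z$-variable it yields the ratio $x_a/x_\delta$ in place of the scalar ratio $(\phi_a+s)/(\phi_\delta+s)$. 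Finally I rescale local times via~\eqref{eq:Relation_AB_localtimes} to pass from the symmetric generator $\widetilde{B^{\bm u}}$ to $\tfrac{1}{2}e^{-\widetilde{\bm u}}\widetilde{A^{\bm u}}e^{\widetilde{\bm u}}$, which produces the factor $e^{u_\delta-u_a}$.

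Granted this lemma, the rest is a four-step chain. First, by Lemma~\ref{lemm:Parisi-Sourlas} and the mixture representation (Lemma~\ref{lemm:mixutre_of_markov_jump_processes}), using the time-change identity $L_i(\rho)=\sqrt{S_i(\varrho)+z_i^{2}}$, the LHS rewrites as
\begin{equation*}
    \left\langle\frac{z_a}{z_\delta}\int\mathbb{E}^{\widetilde{\bm u}}_{a}\bigl[g(\sqrt{\widetilde{\bm S}+\widetilde{\bm z}^{2}})\bigr]\,d\nu^{\widetilde W,\widetilde{\bm z}}_{a}(\widetilde{\bm u})\right\rangle_{\widetilde W,\,v_\delta=\theta_s(0)} .
\end{equation*}
Second, I apply the supersymmetric Bayes formula with base point $a$ to obtain $\int\llbracket\mathbb{E}^{\widetilde{\bm u}}_{a}[g(\sqrt{\widetilde{\bm S}+\widetilde{\bm z}^{2}})]\rrbracket_{\widetilde{A^{\bm u}},X_\delta=\theta_s(0)}\,d\nu^{\widetilde W,\bm 1}_{a}(\widetilde{\bm u})$. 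Third, the susy free field Eisenbaum lemma above converts the integrand to $\llbracket e^{u_\delta-u_a}(x_a/x_\delta)g(\widetilde{\bm z})\rrbracket_{\widetilde{A^{\bm u}},X_\delta=\theta_s(0)}$, and the shift formula (Lemma~\ref{eq:change_of_starting_point_mixing_measure}) absorbs the Radon-Nikodym factor $e^{u_\delta-u_a}$ into a change of base point, giving $\int\llbracket (x_a/x_\delta)g(\widetilde{\bm z})\rrbracket\,d\nu^{\widetilde W,\bm 1}_{\delta}(\widetilde{\bm u})$. Fourth, a reverse application of susy Bayes now with base point $\delta$, together with $z_\delta/z_\delta=1$ and the fact that $\int d\nu^{\widetilde W,\widetilde{\bm z}}_{\delta}=1$, recovers $\langle (x_a/x_\delta)g(\widetilde{\bm z})\rangle_{\widetilde W,v_\delta=\theta_s(0)}$, which is the desired RHS.

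The main technical obstacle is the bookkeeping around the Lorentz boost $\theta_s(0)$ entering the pinning: the scalar Eisenbaum identity involves a global shift $s$ added to $\phi$, and one must verify that, after introducing the auxiliary bosonic and fermionic coordinates, this shift is faithfully represented by the boundary condition $X_\delta=\theta_s(0)$ rather than by a modified action, so that the ratio $(\phi_a+s)/(\phi_\delta+s)$ cleanly becomes $x_a/x_\delta$. Once this is pinned down, the only extra factor is the $e^{u_\delta-u_a}$ from the local time rescaling, and the shift formula for the mixing measure is precisely designed to absorb it; modulo these checks the proof is a line-by-line analogue of the proof of Theorem~\ref{th:Reinforced_BFS-Dynkin}.
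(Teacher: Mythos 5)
Your proposal matches the paper's proof essentially line by line: the same supersymmetric free field Eisenbaum lemma (obtained, as in the paper, by the determinant-rewriting argument of Lemma~\ref{lemm:ClassicalSusy_BFSD} together with the local-time rescaling~\eqref{eq:Relation_AB_localtimes} that produces $e^{u_\delta-u_a}$), followed by the same chain of mixture representation, supersymmetric Bayes at base point $a$, the shift formula~\eqref{eq:change_of_starting_point_mixing_measure} to absorb $e^{u_\delta-u_a}$, and reverse supersymmetric Bayes at $\delta$ with $\int d\nu^{\widetilde{W},\widetilde{\bm{z}}}_{\delta}=1$. The only cosmetic difference is your invocation of Lemma~\ref{lemm:Parisi-Sourlas} in the first step, which is not needed here since the left-hand side already carries the random initial local times $\widetilde{\bm{z}}$ (localization is only used afterwards, in the corollary with initial local times $\bm{1}$ and $s=0$).
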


The theorem above is written to be equivalent to the statement in~\cite{MR4255180}. It seems healthy to record the simpler form of the above theorem with initial local times $\bm{1}$:
\begin{coro}[Reinforced Eisenbaum's isomorphism theorem]
With the same notations as in the previous theorem, we have
\begin{equation*}
    \mathbb{E}^{\widetilde{W},\bm{1}}_a[g(\widetilde{\bm{L}})]=\left\langle \frac{x_a}{x_{\delta}}g(\widetilde{\bm{z}})\right\rangle_{\widetilde{W},v_{\delta}=0}.
\end{equation*}
\end{coro}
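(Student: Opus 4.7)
The plan is to derive the corollary by specializing Theorem~\ref{theo:Reinforced_Eisenbaum} to $s = 0$ and then localizing the resulting left-hand side via the Parisi-Sourlas formula. Since $\theta_0$ is the identity, the origin $v_\delta = \theta_0(0) = 0$ of $\text{H}^{2|2}$ has coordinates $(x_\delta, y_\delta, z_\delta, \xi_\delta, \eta_\delta) = (0, 0, 1, 0, 0)$. Substituting $s = 0$ into Theorem~\ref{theo:Reinforced_Eisenbaum} thus yields
\begin{equation*}
    \left\langle \frac{z_a}{z_\delta}\, \mathbb{E}^{\widetilde{W}, \widetilde{\bm{z}}}_a[g(\widetilde{\bm{L}})] \right\rangle_{\widetilde{W}, v_\delta = 0} = \left\langle \frac{x_a}{x_\delta}\, g(\widetilde{\bm{z}}) \right\rangle_{\widetilde{W}, v_\delta = 0},
\end{equation*}
and the factor $z_\delta = 1$ on the left-hand side can be dropped.

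The remaining step is to verify the localization identity $\langle z_a\, \mathbb{E}^{\widetilde{W}, \widetilde{\bm{z}}}_a[g(\widetilde{\bm{L}})]\rangle_{\widetilde{W}, v_\delta = 0} = \mathbb{E}^{\widetilde{W}, \bm{1}}_a[g(\widetilde{\bm{L}})]$, which is precisely the same identity already invoked as the first equality in the proof of Theorem~\ref{th:Reinforced_BFS-Dynkin}. The argument is that the integrand depends on the superfield $\widetilde{\bm{v}}$ only through $\widetilde{\bm{z}}$, and each $z_i = \sqrt{1 + X_i \cdot X_i}$ is a function of the inner product of the supersymmetric free field component $X_i = (x_i, y_i, \xi_i, \eta_i)$. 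Recasting the $\text{H}^{2|2}$ expectation as a supersymmetric free field integral by absorbing the measure factor $e^{-\frac{1}{2}\widetilde{\bm{z}}\Delta_{\widetilde{W}}\widetilde{\bm{z}}}\prod_{i} z_i^{-1}$ into the integrand (as in the proof of Corollary~\ref{coro:TrivialnessPartitionFunction}) produces an integral of a function of inner products against the supersymmetric free field with pinning at $\delta$. The Parisi-Sourlas localization (Lemma~\ref{lemm:Parisi-Sourlas}) then collapses this integral to the evaluation of its integrand at $\bm{X} = 0$, where $z_i = 1$ for every $i$, giving exactly $\mathbb{E}^{\widetilde{W}, \bm{1}}_a[g(\widetilde{\bm{L}})]$.

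Chaining these two identities yields the claimed equality. The one conceptually delicate point, which I would flag as the main obstacle, is that the right-hand side features the formally singular ratio $\frac{x_a}{x_\delta}$ while the pinning enforces $x_\delta = 0$. This expression is to be interpreted as the $s = 0$ specialization of Theorem~\ref{theo:Reinforced_Eisenbaum}, whose finiteness is guaranteed by the theorem itself. To make this rigorous one can apply Lorentz-boost invariance~\eqref{eq:LorentzBoost} to rewrite $\langle \frac{x_a}{x_\delta} g(\widetilde{\bm{z}})\rangle_{v_\delta = \theta_s(0)}$ as an expectation with pinning $v_\delta = 0$ and take $s \to 0$: after the substitution $v \mapsto \theta_s(v)$ the denominator becomes $\sinh s$, and the apparent $1/\sinh s$ divergence is cancelled by the vanishing of $\langle x_a g(\widetilde{\bm{z}})\rangle_{\widetilde{W}, v_\delta = 0}$, which follows from the symmetry $\bm{x} \mapsto -\bm{x}$ of both the action and the Berezin form. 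No new ingredients beyond Theorem~\ref{theo:Reinforced_Eisenbaum} and the Parisi-Sourlas formula are needed.
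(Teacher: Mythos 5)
Your proof is correct and follows essentially the same route as the paper: specialize Theorem~\ref{theo:Reinforced_Eisenbaum} to $s=0$ and identify the left-hand side $\left\langle \frac{z_a}{z_\delta}\mathbb{E}^{\widetilde{W},\widetilde{\bm{z}}}_{a}[g(\widetilde{\bm{L}})]\right\rangle_{\widetilde{W},v_{\delta}=0}$ with $\mathbb{E}^{\widetilde{W},\bm{1}}_a[g(\widetilde{\bm{L}})]$ via the Parisi--Sourlas localization of Lemma~\ref{lemm:Parisi-Sourlas}, exactly as in the paper's proof. Your extra remark on how to interpret the formally singular ratio $\frac{x_a}{x_\delta}$ under the pinning $v_\delta=0$ (via Lorentz-boost invariance and the $\bm{x}\mapsto-\bm{x}$ symmetry) goes beyond what the paper's short proof records, but it is consistent with the statement and does not alter the argument.
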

\begin{proof}
We only need to observe that when $s=0$ (that is, without the Lorentz boost), the supersymmetric localization formula of Lemma~\ref{lemm:Parisi-Sourlas} applies and yields
\begin{equation*}
    \left\langle \frac{z_a}{z_\delta}\mathbb{E}^{\widetilde{W},\widetilde{\bm{z}}}_{a}[g(\widetilde{\bm{L}})]\right\rangle_{\widetilde{W},v_{\delta}=0}=\mathbb{E}^{\widetilde{W},\bm{1}}_a[g(\widetilde{\bm{L}})].
\end{equation*}
Therefore, the corollary follows directly from Theorem~\ref{theo:Reinforced_Eisenbaum}.
\end{proof}

As a preliminary, record the following supersymmetric free field version of Eisenbaum's isomorphism theorem:
\begin{lemm}[Supersymmetric free field Eisenbaum's isomorphism theorem]
Consider a Markov jump process $Z$ starting at vertex $a$ on the augmented graph $\widetilde{V}$ with generator $\frac{1}{2}e^{-\widetilde{\bm{u}}}\widetilde{A^{\bm{u}}}e^{\widetilde{\bm{u}}}$, where $\widetilde{\bm{u}}$ is some fixed environment and $\widetilde{A^{\bm{u}}}$ is the matrix~\eqref{eq:Definition_AuMatrix_tilded}. Let $\widetilde{\bm{S}}$ denote the local times when the process is killed at the root vertex $\delta$. Then for any $s\in\mathbb{R}$ and any smooth bounded function $g$ with rapid decay,
\begin{equation*}
    \llbracket\mathbb{E}^{\widetilde{\bm{u}}}_{a}[g(\widetilde{\bm{S}}+\widetilde{\bm{z}}^2)]\rrbracket_{\widetilde{A^{\bm{u}}},X_{\delta}=\theta_s(0)}=\left\llbracket e^{u_\delta-u_a}\frac{x_a}{x_{\delta}}g(\widetilde{\bm{z}}^2)\right\rrbracket_{\widetilde{A^{\bm{u}}},X_{\delta}=\theta_s(0)},
\end{equation*}
where $\llbracket\cdot\rrbracket_{\widetilde{A^{\bm{u}}},X_{\delta}=\star}$ is the expectation with respect to the supersymmetric free field $(X_i)_{i\in \widetilde{V}}=((x_i,y_i,\xi_i,\eta_i))_{i\in \widetilde{V}}$ twisted by $A^{\widetilde{\bm{u}}}$ with pinning condition (defined at the beginning of Section~\ref{sec:a_supersymmetric_bayes_formula}) and $z_i^2=x_i^2+y_i^2+2\xi_i\eta_i+1$ for all $i\in \widetilde{V}$.
\end{lemm}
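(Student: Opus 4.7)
The proof closely parallels that of Lemma~\ref{lemm:ClassicalSusy_BFSD}. The plan is to (a) rewrite the standard Markovian Eisenbaum isomorphism (Theorem~\ref{theoA:standard_Eisenbaum}) as an equality of bosonic Gaussian Laplace transforms in $\bm\lambda$, (b) promote it to a supersymmetric identity with symmetric generator $A$ by multiplying in compensating $\bm y$-Gaussian and $(\bm\xi,\bm\eta)$-Grassmann integrals, and (c) specialise $A=\widetilde{B^{\bm u}}$ and convert to the $\widetilde{A^{\bm u}}$-formulation via the shorthand~\eqref{eq:ShortHand_Au_Expectation}, exactly as in Lemma~\ref{lemm:ClassicalSusy_BFSD}.

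For step (b), the determinant contributions from the compensating integrals, $\det(A+\Lambda)^{-1/2}$ from $\bm y$ and $\det(A+\Lambda)$ from $(\bm\xi,\bm\eta)$ with $\Lambda=\mathrm{diag}(\bm\lambda)$, are common to both sides and therefore cancel. Replacing $\bm x^2$ by $\bm z^2=\bm x^2+\bm y^2+2\bm\xi\bm\eta+1$ on both sides of the Laplace identity produces matching $e^{\frac{1}{2}\langle\bm\lambda,\bm 1\rangle}$ corrections, and inverting the Laplace transform yields the intermediate supersymmetric Eisenbaum identity with symmetric generator $A$:
\begin{equation*}
\llbracket\mathbb{E}^{A}_{a}[g(\bm S+\tfrac{1}{2}\bm z^2)]\rrbracket_{A,X_\delta=(s,0,0,0)}=\llbracket\tfrac{x_a}{x_\delta}g(\tfrac{1}{2}\bm z^2)\rrbracket_{A,X_\delta=(s,0,0,0)}
\end{equation*}
for any real $s$. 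Here the pinning $x_\delta=s$, $y_\delta=\xi_\delta=\eta_\delta=0$ naturally matches the classical Eisenbaum with shift parameter $s$, after absorbing the shift into the field translation $\tilde\phi=\phi+s$.

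For step (c), apply the above identity with $A=\widetilde{B^{\bm u}}$ and the $\bm u$-dependent pinning parameter $\bar s:=e^{-u_\delta}\sinh s$. The shorthand $\llbracket F(X)\rrbracket_{\widetilde{A^{\bm u}}}=\llbracket F(e^{\widetilde{\bm u}}X)\rrbracket_{\widetilde{B^{\bm u}}}$ implements the change of variables $X^{(B)}=e^{-\widetilde{\bm u}}X^{(A)}$. Under this substitution, the $\widetilde{B^{\bm u}}$-pinning $X_\delta=(\bar s,0,0,0)$ becomes the $\widetilde{A^{\bm u}}$-pinning $X_\delta=(e^{u_\delta}\bar s,0,0,0)=\theta_s(0)$; the ratio $x_a/x_\delta$ picks up the multiplicative factor $e^{u_\delta-u_a}$ coming from $x^{(B)}_i=e^{-u_i}x^{(A)}_i$; and the local-times rescaling~\eqref{eq:Relation_AB_localtimes} together with the rescaling of $\bm z^2$ turns the arguments of $g$ into an explicit affine function of $\widetilde{\bm S}+\widetilde{\bm z}^2$ and $\widetilde{\bm z}^2$ respectively. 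Redefining $g$ by composition with this affine map, exactly as at the end of the proof of Lemma~\ref{lemm:ClassicalSusy_BFSD}, yields the announced identity.

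The main obstacle is tracking the pinning through all these substitutions: the classical shift parameter must be chosen as the environment-dependent value $\bar s=e^{-u_\delta}\sinh s$ so that, after the shorthand substitution, the $\widetilde{A^{\bm u}}$-pinning lands on $\theta_s(0)$. This choice is legitimate pointwise in $\bm u$ because the classical Eisenbaum isomorphism holds for every real shift parameter, and the required factor $e^{u_\delta-u_a}$ on the right-hand side then emerges naturally from the rescaling of the ratio $x_a/x_\delta$. All remaining bookkeeping (the cancellation of the $+1$ in $\bm z^2$ and the affine redefinition of $g$) is parallel to the BFS-Dynkin case treated in Lemma~\ref{lemm:ClassicalSusy_BFSD}.
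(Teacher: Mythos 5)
Your proposal is correct and follows essentially the route the paper intends: it supersymmetrizes the classical Eisenbaum identity (Theorem~\ref{theoA:standard_Eisenbaum}) exactly as in the proof of Lemma~\ref{lemm:ClassicalSusy_BFSD} — compensating bosonic and Grassmann integrals cancelling the determinants — and then passes from $\widetilde{B^{\bm{u}}}$ to $\widetilde{A^{\bm{u}}}$ via the shorthand~\eqref{eq:ShortHand_Au_Expectation} and the local-time rescaling~\eqref{eq:Relation_AB_localtimes}, which is precisely the analogy the paper invokes when omitting this proof. Your explicit choice of the environment-dependent shift $\bar{s}=e^{-u_\delta}\sinh s$ so that the pinning lands on $\theta_s(0)$, and the affine (rather than purely multiplicative) redefinition of $g$ accounting for the $+1$ in $\bm{z}^2$, are careful touches consistent with, and slightly more precise than, the paper's sketch.
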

The proof is similar to that of Lemma~\ref{lemm:ClassicalSusy_BFSD} (by rewriting determinants in the original Eisenbaum's isomorphism theorem (see Theorem~\ref{theoA:standard_Eisenbaum}) arising from the Gaussian free field as fermionic integration) and is omitted.

\begin{proof}[Proof of Theorem~\ref{theo:Reinforced_Eisenbaum}]
The proof is very similar to that of Theorem~\ref{th:Reinforced_BFS-Dynkin}.

Start with the relation of local times $L_i=\sqrt{S_i+z_i^2}$,
\begin{equation*}
    \left\langle \frac{z_a}{z_\delta}\mathbb{E}^{\widetilde{W},\widetilde{\bm{z}}}_{a}[g(\widetilde{\bm{L}})]\right\rangle_{\widetilde{W},v_{\delta}=\theta_s(0)}=\left\langle \frac{z_a}{z_\delta}\mathbb{E}^{\widetilde{W},\widetilde{\bm{z}}}_{a}[g(\sqrt{\widetilde{\bm{S}}^2+\widetilde{\bm{z}}^2})]\right\rangle_{\widetilde{W},v_{\delta}=\theta_s(0)},
\end{equation*}
and the interpretation of the Vertex Reinforced Jump Process as a standard Markovian jump process in random environment,
\begin{equation*}
    \left\langle \frac{z_a}{z_\delta}\mathbb{E}^{\widetilde{W},\widetilde{\bm{z}}}_{a}[g(\sqrt{\widetilde{\bm{S}}^2+\widetilde{\bm{z}}^2})]\right\rangle_{\widetilde{W},v_{\delta}=\theta_s(0)}=\left\langle \frac{z_a}{z_\delta}\int_{\mathbb{R}^{|V|}}\mathbb{E}^{\bm{u}}_a[g(\sqrt{\widetilde{\bm{S}}^2+\widetilde{\bm{z}}^2})]d\nu^{\widetilde{W},\widetilde{\bm{z}}}_{a}(\widetilde{\bm{u}})\right\rangle_{\widetilde{W},v_{\delta}=\theta_s(0)}
\end{equation*}
where we can also rewrite as supersymmetric free field expectation in random environment using the supersymmetric Bayes formula of Theorem~\ref{th:susy_Bayes}:
\begin{equation*}
\begin{split}
    &\left\langle \frac{z_a}{z_\delta}\int_{\mathbb{R}^{|V|}}\mathbb{E}^{\widetilde{\bm{u}}}_{a}[g(\sqrt{\widetilde{\bm{S}}^2+\widetilde{\bm{z}}^2})]d\nu^{\widetilde{W},\widetilde{\bm{z}}}_{a}(\widetilde{\bm{u}})\right\rangle_{\widetilde{W},v_{\delta}=\theta_s(0)}\\
    ={}&\int_{\mathbb{R}^{|V|}}\left\llbracket\mathbb{E}^{\widetilde{\bm{u}}}_{a}[g(\sqrt{\widetilde{\bm{S}}^2+\widetilde{\bm{z}}^2})]\right\rrbracket_{\widetilde{A^{\bm{u}}},X_{\delta}=\theta_s(0)}d\nu^{\widetilde{W},\bm{1}}_{a}(\widetilde{\bm{u}}).
\end{split}
\end{equation*}

At this stage, use the supersymmetric free field version of Eisenbaum's isomorphism theorem,
\begin{equation*}
\begin{split}
    &\int_{\mathbb{R}^{|V|}}\left\llbracket\mathbb{E}^{\widetilde{\bm{u}}}_{a}[g(\sqrt{\widetilde{\bm{S}}^2+\widetilde{\bm{z}}^2})]\right\rrbracket_{\widetilde{A^{\bm{u}}},X_{\delta}=\theta_s(0)}d\nu^{\widetilde{W},\bm{1}}_{a}(\widetilde{\bm{u}})\\
    ={}&\int_{\mathbb{R}^{|V|}}\left\llbracket e^{u_\delta-u_a}\frac{x_a}{x_{\delta}}g(\widetilde{\bm{z}})\right\rrbracket_{\widetilde{A^{\bm{u}}},X_{\delta}=\theta_s(0)}d\nu^{\widetilde{W},\bm{1}}_{a}(\widetilde{\bm{u}}),
\end{split}
\end{equation*}
the change of base point formula~\eqref{eq:change_of_starting_point_mixing_measure},
\begin{equation*}
    \int_{\mathbb{R}^{|V|}}\left\llbracket e^{u_\delta-u_a}\frac{x_a}{x_{\delta}}g(\widetilde{\bm{z}})\right\rrbracket_{\widetilde{A^{\bm{u}}},X_{\delta}=\theta_s(0)}d\nu^{\widetilde{W},\bm{1}}_{a}(\widetilde{\bm{u}})=\int_{\mathbb{R}^{|V|}}\left\llbracket \frac{x_a}{x_{\delta}}g(\widetilde{\bm{z}})\right\rrbracket_{\widetilde{A^{\bm{u}}},X_{\delta}=\theta_s(0)}d\nu^{\widetilde{W},\bm{1}}_{\delta}(\widetilde{\bm{u}}),
\end{equation*}
and supersymmetric Bayes back to a $\text{H}^{2|2}$ expectation using Theorem~\ref{th:susy_Bayes},
\begin{equation*}
    \int_{\mathbb{R}^{|V|}}\left\llbracket \frac{x_a}{x_{\delta}}g(\widetilde{\bm{z}})\right\rrbracket_{\widetilde{A^{\bm{u}}},X_{\delta}=\theta_s(0)}d\nu^{\widetilde{W},\bm{1}}_{\delta}(\widetilde{\bm{u}})=\left\langle\frac{x_a}{x_{\delta}}g(\widetilde{\bm{z}})\int_{\mathbb{R}^{|V|}}d\nu^{\widetilde{W},\bm{z}}_{\delta}(\widetilde{\bm{u}})\right\rangle_{\widetilde{W},v_{\delta}=\theta_s(0)},
\end{equation*}
which integrates to $\langle\frac{x_a}{x_{\delta}}g(\widetilde{\bm{z}})\rangle_{\widetilde{W},v_{\delta}=\theta_s(0)}$ as $d\nu^{\widetilde{W},\widetilde{\bm{z}}}_{\delta}(\widetilde{\bm{u}})$ integrates to $1$ for each $\widetilde{\bm{z}}$.
\end{proof}

\section{Isomorphism theorems for the reinforced loop soup}\label{sec:LoopSoupIsomorphisms}
In this section, we study the reinforced loop soup constructed via the reinforced Wilson's algorithm defined in Section~\ref{sec:wilson_s_algorithm_with_reinforcement}. Recall that we denoted by $\mathbb{L}^{\bm{\vartheta}}_1$ the loop soup constructed as the erased loops during the reinforced Wilson's algorithm, where $\bm{\vartheta}$ denotes the initial local times at the start of the algorithm. We will always take $\bm{\vartheta}=\bm{1}$ in the following for simplicity, although our method extends naturally to arbitrary initial local times. Our goal is to establish a reinforced version of the standard Markovian loop soup isomorphism theorem. Our philosophy of proof is the same as in the previous section: we integrate the standard Markovian loop soup isomorphism theorem in a well-chosen random environment. The main novelty is that we correctly identify the reinforced Wilson's algorithm as the correct counterpart for the Vertex Reinforced Jump Process in the previous section, providing a trajectory explanation to this identity.

\subsection{Isomorphism theorems for the reinforced loop soup}\label{sub:isomorphism_theorems_for_the_reinforced_loop_soup}
By reinforced (or $\text{H}^{2|2}$) loop soup isomorphism theorem, we mean an identity relating the occupation field $\widehat{\mathbb{L}^{\bm{\vartheta}}_1}$ of the reinforced loop soup $\mathbb{L}^{\bm{\vartheta}}_1$ and certain observables of the $\text{H}^{2|2}$-model. The main result of this section is:
\begin{theo}[Supersymmetric hyperbolic Dynkin isomorphism for the reinforced loop soup]\label{th:SUSY_BFSD_Loop}
Denote by $\mathbb{E}^{\text{re-soup},\widetilde{W}}$ the expectation with respect to the law of the random reinforced loop soup $\mathbb{L}$ on the augmented graph $\widetilde{V}$ with edge weights $(\widetilde{W}_{ij})_{(ij)\in E(\widetilde{V})}$ and initial local times $\bm{1}$. Then for any smooth bounded function $g$ with rapid decay,
\begin{equation}
    \mathbb{E}^{\text{re-soup},\widetilde{W}}[g(\widehat{\mathbb{L}^{\bm{1}}_{1}})]=\left\langle g\left(\widetilde{\bm{x}}^2+\widetilde{\bm{y}}^2\right)\right\rangle_{\widetilde{W},\widetilde{v}_{\delta}=0},
\end{equation}
where the right hand side is the $\text{H}^{2|2}$-expectation form with pinning at $\delta\in\widetilde{V}$, and the notation $\widetilde{\bm{x}}^2+\widetilde{\bm{y}}^2$ should be understood component-wise, i.e. as $(x_i^2+y_i^2)_{i\in\widetilde{V}}$.
\end{theo}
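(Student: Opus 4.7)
The plan is to follow the same template used for the reinforced BFS-Dynkin, second Ray-Knight, and Eisenbaum isomorphisms in the preceding section: rewrite the left-hand side using the mixture representation of the reinforced Wilson's algorithm, apply a supersymmetric free field analog of the standard Markovian loop soup Dynkin isomorphism in the resulting random environment, and transfer back to an $\text{H}^{2|2}$ expectation via the supersymmetric Bayes formula (Theorem~\ref{th:susy_Bayes}).

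Concretely, by Lemma~\ref{lemm:mixture_of_Markov_Wilson} combined with Lemma~\ref{lemm:OccupationFieldWilson_L1} applied conditionally on the environment, I would first write
\begin{equation*}
    \mathbb{E}^{\text{re-soup},\widetilde{W}}[g(\widehat{\mathbb{L}^{\bm{1}}_1})] = \int \mathbb{E}^{\text{loop},\widetilde{\bm{u}}}\bigl[g\bigl(\widehat{\mathcal{L}_1^{\widetilde{\bm{u}}}}\bigr)\bigr]\, d\nu^{\widetilde{W},\bm{1}}_\delta(\widetilde{\bm{u}}),
\end{equation*}
where $\widehat{\mathcal{L}_1^{\widetilde{\bm{u}}}}$ denotes the occupation field of the quenched Poissonian loop soup on $\widetilde{V}$ killed at $\delta$ with rates $\tfrac{1}{2}\widetilde{W}_{ij}e^{u_j-u_i}$. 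Next, I would prove as a self-contained preliminary the supersymmetric free field counterpart of Lemma~\ref{lemm:Classical_Loop_Dynkin}, namely
\begin{equation*}
    \mathbb{E}^{\text{loop},\widetilde{\bm{u}}}\bigl[g\bigl(\widehat{\mathcal{L}_1^{\widetilde{\bm{u}}}}\bigr)\bigr] = \llbracket g(\widetilde{\bm{x}}^2+\widetilde{\bm{y}}^2)\rrbracket_{\widetilde{A^{\bm{u}}},X_\delta=0}.
\end{equation*}
The argument parallels the proof of Lemma~\ref{lemm:ClassicalSusy_BFSD}: apply the standard Dynkin isomorphism for the loop soup with the symmetric generator $\widetilde{B^{\bm{u}}}=e^{\widetilde{\bm{u}}}\widetilde{A^{\bm{u}}}e^{\widetilde{\bm{u}}}$ (giving the $\tfrac{1}{2}(\bm{x}^2+\bm{y}^2)$ pattern), rewrite $\det(\widetilde{B^{\bm{u}}})$ as a Grassmann integral to convert the identity into a supersymmetric free field expectation with respect to $\widetilde{B^{\bm{u}}}$, then use the local time rescaling~\eqref{eq:Relation_AB_localtimes} and the change of variables $X\mapsto e^{\widetilde{\bm{u}}}X$ relating $\llbracket\cdot\rrbracket_{\widetilde{B^{\bm{u}}}}$ and $\llbracket\cdot\rrbracket_{\widetilde{A^{\bm{u}}}}$ to absorb the $\tfrac{1}{2}$ and all residual $\bm{u}$-dependent factors into the clean expression above.

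Substituting this quenched identity into the mixture decomposition and applying the supersymmetric Bayes formula at $a=\delta$ and $s=0$ (so that the prefactor $z_a/z_\delta$ is unity), we would then obtain
\begin{equation*}
    \mathbb{E}^{\text{re-soup},\widetilde{W}}[g(\widehat{\mathbb{L}^{\bm{1}}_1})] = \left\langle g(\widetilde{\bm{x}}^2+\widetilde{\bm{y}}^2)\int d\nu^{\widetilde{W},\widetilde{\bm{z}}}_\delta(\widetilde{\bm{u}})\right\rangle_{\widetilde{W},v_\delta=0} = \left\langle g(\widetilde{\bm{x}}^2+\widetilde{\bm{y}}^2)\right\rangle_{\widetilde{W},v_\delta=0},
\end{equation*}
the last equality using that the mixing measure $d\nu^{\widetilde{W},\widetilde{\bm{z}}}_\delta$ is a probability measure for each fixed $\widetilde{\bm{z}}$. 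The main obstacle I expect is the careful bookkeeping in the preliminary supersymmetric loop soup identity: one must verify that the $\tfrac{1}{2}$ in the standard Dynkin, the scaling between the generators $\widetilde{B^{\bm{u}}}$ and $\tfrac{1}{2}e^{-\widetilde{\bm{u}}}\widetilde{A^{\bm{u}}}e^{\widetilde{\bm{u}}}$, and the multiplicative substitution between the two supersymmetric free field expectations all conspire to produce the $\bm{u}$-independent, prefactor-free expression $\widetilde{\bm{x}}^2+\widetilde{\bm{y}}^2$ on the right-hand side.
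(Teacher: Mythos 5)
Your proposal is correct and follows essentially the same route as the paper: the mixture-of-Markovian-Wilson representation (Lemma~\ref{lemm:mixture_of_Markov_Wilson}) to reduce to the quenched Poissonian loop soup, the supersymmetric free field upgrade of Le Jan's loop soup Dynkin isomorphism proved by the same determinant-to-Grassmann rewriting and $\widetilde{B^{\bm{u}}}\to\widetilde{A^{\bm{u}}}$ rescaling as in Lemma~\ref{lemm:ClassicalSusy_BFSD} (this is the paper's Lemma~\ref{lemm:Upgrade_susy_loop}), and finally the supersymmetric Bayes formula at $a=\delta$, $s=0$ together with the fact that $d\nu^{\widetilde{W},\widetilde{\bm{z}}}_{\delta}$ integrates to $1$. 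The bookkeeping you flag (the factor $\tfrac12$, the scaling~\eqref{eq:Relation_AB_localtimes}, and the substitution $X\mapsto e^{\widetilde{\bm{u}}}X$) is exactly how the paper absorbs the residual $\bm{u}$-dependence, so no gap remains.
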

In other words, the occupation field of the reinforced loop soup constructed out of the reinforced Wilson's algorithm is described by the variable $x^2+y^2$ of the $\text{H}^{2|2}$-model.

\subsection{Proof of the reinforced loop soup isomorphism theorem}
The proof follows roughly the same philosophy as those for the Vertex Reinforced Jump Process in Section~\ref{sec:isomorphism_theorems_for_the_vertex_reinforced_jump_process}. Consider the loop soup $\mathcal{L}^{\widetilde{\bm{u}}}_1$ on the augmented graph $\widetilde{V}$ with generator $\frac{1}{2}e^{-\widetilde{\bm{u}}}\widetilde{A^{\bm{u}}}e^{\widetilde{\bm{u}}}$ with $\widetilde{A^{\bm{u}}}$ defined in~\eqref{eq:Definition_AuMatrix_tilded} and $\widetilde{\bm{u}}$ is some field defined on the augmented graph $\widetilde{V}$ with the pinning condition $u_{\delta}=0$. Recall that the above generator corresponds to a standard Markov jump process on $\widetilde{V}$ with jump rate
\begin{equation*}
    \frac{1}{2}\widetilde{W}_{ij}e^{u_j-u_i}
\end{equation*}
and killed at the root vertex $\delta$. Denote by $\mathbb{E}^{\text{loop},\widetilde{\bm{u}}}$ the expectation with respect to the law of this Poissonian loop soup in the environment $\widetilde{\bm{u}}$.

Our first step is to upgrade the standard Markovian Dynkin isomorphism theorem for the loop soup (see Theorem~\ref{lemm:Classical_Loop_Dynkin}) to a supersymmetric free field version:
\begin{lemm}[Supersymmetric free field Dynkin isomorphism for the loop soup]\label{lemm:Upgrade_susy_loop}
For any smooth bounded function $g$ with rapid decay, if $X_i=(x_i,y_i,\xi_i,\eta_i)$ is the supersymmetric free field with the expectation form $\llbracket \cdot \rrbracket_{\widetilde{A^{\bm{u}}},\widetilde{X}_{\delta}=0}$ defined at the beginning of Section~\ref{sec:a_supersymmetric_bayes_formula}, we have (with $\alpha=1$)
\begin{equation*}
    \left\llbracket\mathbb{E}^{\text{loop},\widetilde{\bm{u}}}[g(\widehat{\mathcal{L}^{\bm{\widetilde{u}}}_{1}})]\right\rrbracket_{\widetilde{A^{\bm{u}}},\widetilde{X}_{\delta}=0}=\llbracket g(\widetilde{\bm{x}}^2+\widetilde{\bm{y}}^2)\rrbracket_{\widetilde{A^{\bm{u}}},\widetilde{X}_{\delta}=0}.
\end{equation*}
\end{lemm}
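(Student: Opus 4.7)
The plan is to mirror the proof of Lemma~\ref{lemm:ClassicalSusy_BFSD}: apply the standard Markovian Dynkin loop soup isomorphism (Theorem~\ref{lemm:Classical_Loop_Dynkin}) to the symmetric matrix $\widetilde{B^{\bm{u}}} = e^{\widetilde{\bm{u}}}\widetilde{A^{\bm{u}}}e^{\widetilde{\bm{u}}}$ (positive-definite on $V$ thanks to the pinning at $\delta$), upgrade the resulting real Gaussian identity to a supersymmetric one by rewriting $\det(\widetilde{B^{\bm{u}}})$ as the fermionic Gaussian integral $\int e^{-\bm{\xi}\widetilde{B^{\bm{u}}}\bm{\eta}}\prod_{i\in V} d\xi_i d\eta_i$, and then transfer from $\widetilde{B^{\bm{u}}}$ to the non-symmetric $\widetilde{A^{\bm{u}}}$ via the change of variable $X\mapsto e^{\widetilde{\bm{u}}}X$ built into the shorthand~\eqref{eq:ShortHand_Au_Expectation}, combined with the loop soup analogue of the time-change relation~\eqref{eq:Relation_AB_localtimes}.

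After the first two steps, grouping the real and Grassmann Gaussian factors together gives
\begin{equation*}
    \mathbb{E}^{\text{loop},\widetilde{B^{\bm{u}}}}[g(\widehat{\mathcal{L}^{\widetilde{B^{\bm{u}}}}_{1}})] = \llbracket g(\tfrac{1}{2}(\widetilde{\bm{x}}^2+\widetilde{\bm{y}}^2))\rrbracket_{\widetilde{B^{\bm{u}}},\widetilde{X}_\delta = 0}.
\end{equation*}
Since $\tfrac{1}{2}e^{-\widetilde{\bm{u}}}\widetilde{A^{\bm{u}}}e^{\widetilde{\bm{u}}} = \tfrac{1}{2}e^{-2\widetilde{\bm{u}}}\widetilde{B^{\bm{u}}}$, the processes with these two generators differ only by a vertex-wise time change, and exactly as in~\eqref{eq:Relation_AB_localtimes} the corresponding loop soup occupation fields should satisfy $\widehat{\mathcal{L}^{\widetilde{B^{\bm{u}}}}_{1,i}} = \tfrac{1}{2}e^{-2u_i}\,\widehat{\mathcal{L}^{\widetilde{\bm{u}}}_{1,i}}$ for each $i\in V$. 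On the supersymmetric side, applying~\eqref{eq:ShortHand_Au_Expectation} with $F(\widetilde{X})=g(\tfrac{1}{2}e^{-2\widetilde{\bm{u}}}(\widetilde{\bm{x}}^2+\widetilde{\bm{y}}^2))$ rewrites the right-hand side as $\llbracket g(\tfrac{1}{2}e^{-2\widetilde{\bm{u}}}(\widetilde{\bm{x}}^2+\widetilde{\bm{y}}^2))\rrbracket_{\widetilde{A^{\bm{u}}},\widetilde{X}_\delta=0}$. Combining both replacements and then absorbing the factor $\tfrac{1}{2}e^{-2\widetilde{\bm{u}}}$ by replacing the arbitrary test function $g$ with $g(2e^{2\widetilde{\bm{u}}}\,\cdot\,)$ (allowable since $\widetilde{\bm{u}}$ is fixed) delivers the claimed identity. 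Finally, the outer $\llbracket \cdot \rrbracket_{\widetilde{A^{\bm{u}}},\widetilde{X}_\delta=0}$ on the left-hand side of the statement is cosmetic: its integrand does not depend on the supersymmetric field and the partition function equals $1$ by Corollary~\ref{coro:TrivialnessPartitionFunction}.

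The main obstacle is justifying the loop occupation field rescaling $\widehat{\mathcal{L}^{\widetilde{B^{\bm{u}}}}_{1,i}} = \tfrac{1}{2}e^{-2u_i}\,\widehat{\mathcal{L}^{\widetilde{\bm{u}}}_{1,i}}$. In contrast to the Markov chain situation of~\eqref{eq:Relation_AB_localtimes}, the loop measure in Section~\ref{subse:poissonian_loop_soup} carries an intrinsic $\tfrac{dt}{t}$ factor in the total duration $t$ of a loop, and this factor interacts non-trivially with a vertex-dependent speed change; so some care is needed at the level of bridge measures to verify that only the occupation times rescale and no extra Jacobian survives after integrating in $t$. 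One clean way to bypass this technicality is to work entirely at the level of Laplace transforms: write $\mathbb{E}^{\text{loop},\widetilde{\bm{u}}}[\exp(-\langle\lambda,\widehat{\mathcal{L}^{\widetilde{\bm{u}}}_1}\rangle)]$ as a ratio of determinants involving the Green's function of the quenched process and compare it directly with the Gaussian-plus-Grassmann integrals defining the right-hand side, thereby avoiding any explicit manipulation of the loop measures themselves.
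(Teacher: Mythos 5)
Your proposal follows essentially the same route as the paper's own (sketched) proof: apply the Markovian loop-soup Dynkin isomorphism (Theorem~\ref{lemm:Classical_Loop_Dynkin}) to the symmetric matrix $\widetilde{B^{\bm{u}}}$, upgrade the determinant via Grassmann integration exactly as in Lemma~\ref{lemm:ClassicalSusy_BFSD}, pass from $\widetilde{B^{\bm{u}}}$ to $\widetilde{A^{\bm{u}}}$ via~\eqref{eq:ShortHand_Au_Expectation} and the local-time rescaling~\eqref{eq:Relation_AB_localtimes}, and absorb the factor $\frac{1}{2}e^{-2\widetilde{\bm{u}}}$ into $g$. The only difference is that you flag and address the vertex-dependent time-change of the loop occupation field (which the paper takes for granted by invoking~\eqref{eq:Relation_AB_localtimes}), and your Laplace-transform/Green-function justification of that step is a sound way to close it.
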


\begin{proof}[Proof of Lemma~\ref{lemm:Upgrade_susy_loop}]
The proof is similar to that of Lemma~\ref{lemm:ClassicalSusy_BFSD}: we include a sketch here.

Consider the symmetric generator $\widetilde{B^{u}}=e^{\widetilde{\bm{u}}}\widetilde{A^{\bm{u}}}e^{\widetilde{\bm{u}}}$ as in the beginning of Section~\ref{sec:a_supersymmetric_bayes_formula}. To this generator, the standard procedure of adding the determinant arising from fermionic integration explained in the beginning of Lemma~\ref{lemm:ClassicalSusy_BFSD} yields
\begin{equation*}
    \left\llbracket\mathbb{E}^{\text{loop},\widetilde{\bm{u}}}[g(\widehat{\mathcal{L}^{\bm{\widetilde{u}}}_{1}})]\right\rrbracket_{\widetilde{B^{\bm{u}}},\widetilde{X}_{\delta}=0}=\left\llbracket g\Big(\frac{1}{2}(\widetilde{\bm{x}}^2+\widetilde{\bm{y}}^2)\Big)\right\rrbracket_{\widetilde{B^{\bm{u}}},\widetilde{X}_{\delta}=0},
\end{equation*}
where we applied Theorem~\ref{lemm:Classical_Loop_Dynkin} to the generator $\widetilde{B^{u}}$. It remains to change the generator $\widetilde{B^{u}}$ back to $\widetilde{A^{u}}$. Recall $\llbracket F(X)\rrbracket_{\widetilde{A^{\bm{u}}}}=\llbracket F(e^{\widetilde{\bm{u}}}X)\rrbracket_{\widetilde{B^{\bm{u}}}}$ from~\eqref{eq:ShortHand_Au_Expectation} and the scaling of local times~\eqref{eq:Relation_AB_localtimes}. The above display becomes
\begin{equation*}
    \left\llbracket\mathbb{E}^{\text{loop},\widetilde{\bm{u}}}\left[g\Big(\frac{1}{2}e^{-2\widetilde{\bm{u}}}\widehat{\mathcal{L}^{\bm{\widetilde{u}}}_{1}}\Big)\right]\right\rrbracket_{\widetilde{A^{\bm{u}}},\widetilde{X}_{\delta}=0}=\left\llbracket g\Big(\frac{1}{2}e^{-2\widetilde{\bm{u}}}(\widetilde{\bm{x}}^2+\widetilde{\bm{y}}^2)\Big)\right\rrbracket_{\widetilde{A^{\bm{u}}},\widetilde{X}_{\delta}=0}.
\end{equation*}
Redefining the function $g$ by scaling yields Lemma~\ref{lemm:Upgrade_susy_loop}.
\end{proof}

It remains to use the random environment representation of the $\text{H}^{2|2}$-model~\cite{MR3420510} and integrate (i.e. anneal) the above isomorphism to get Theorem~\ref{th:SUSY_BFSD_Loop}.
\begin{proof}[Proof of Theorem~\ref{th:SUSY_BFSD_Loop}]
The proof combines a list of lemmas presented in this article.

$\bullet$ First, let us rewrite the reinforced loop soup occupation field as a mixture of standard Markovian loop soup occupation fields in some random environment:
\begin{equation*}
    \mathbb{E}^{\text{re-soup},\widetilde{W}}[g(\widetilde{\mathbb{L}^{\bm{1}}_{1}})]=\int_{\mathbb{R}^{|V|}}\mathbb{E}^{\text{soup},\widetilde{\bm{u}}}[g(\widehat{\mathcal{L}^{\bm{\widetilde{u}}}_{1}})]d\nu_{\delta}^{\widetilde{W},\bm{1}}(\widetilde{\bm{u}})
\end{equation*}
with $d\nu_{\delta}^{\widetilde{W},\bm{1}}$ the mixing measure defined in~\eqref{eq:killed_MixingDensityU_i0}. This identity follows from the interpretation of the reinforced Wilson's algorithm as a mixture of standard Markovian Wilson's algorithms in the same random environment, Lemma~\ref{lemm:mixture_of_Markov_Wilson}.

$\bullet$ Next, rewrite the integrand in the above display as
\begin{equation*}
    \mathbb{E}^{\text{soup},\widetilde{\bm{u}}}[g(\widehat{\mathcal{L}^{\bm{\widetilde{u}}}_{1}})]=\llbracket g\left((x_i^2+y_i^2)_{i\in\widetilde{V}}\right)\rrbracket_{\widetilde{A^{\bm{u}}},\widetilde{v}_{\delta}=0},
\end{equation*}
which follows from Lemma~\ref{lemm:Upgrade_susy_loop}.

$\bullet$ We continue with the supersymmetric Bayes formula of Theorem~\ref{th:susy_Bayes} (with $a=\delta$):
\begin{equation*}
    \int_{\mathbb{R}^{|V|}}\llbracket g\left((x_i^2+y_i^2)_{i\in\widetilde{V}}\right)\rrbracket_{\widetilde{A^{\bm{u}}},\widetilde{v}_{\delta}=0}d\nu_{\delta}^{\widetilde{W},\bm{1}}(\widetilde{\bm{u}})=\left\langle g\left((x_i^2+y_i^2)_{i\in\widetilde{V}}\right)\int_{\mathbb{R}^{|V|}}d\nu_{\delta}^{\widetilde{W},\widetilde{\bm{z}}}(\widetilde{\bm{u}})\right\rangle_{\widetilde{W},\widetilde{v}_{\delta}=0}.
\end{equation*}
Notice that the initial local time conditions for the mixing measure on the right-hand side above is the $\widetilde{\bm{z}}$ variable in the $\text{H}^{2|2}$-model.

$\bullet$ The proof finishes by combining the steps before to get
\begin{equation*}
\begin{split}
    \mathbb{E}^{\text{re-soup},\widetilde{W}}[g(\widetilde{\mathbb{L}^{\bm{1}}_{1}})]&=\left\langle g\left((x_i^2+y_i^2)_{i\in\widetilde{V}}\right)\int_{\mathbb{R}^{|V|}}d\nu_{\delta}^{\widetilde{W},\widetilde{\bm{z}}}(\widetilde{\bm{u}})\right\rangle_{\widetilde{W},\widetilde{v}_{\delta}=0}\\
    &=\left\langle g\left((x_i^2+y_i^2)_{i\in\widetilde{V}}\right)\right\rangle_{\widetilde{W},\widetilde{v}_{\delta}=0},
\end{split}
\end{equation*}
since $d\nu_{\delta}^{\widetilde{W},\widetilde{\bm{z}}}$ integrates to $1$.
\end{proof}

\subsection{Identities on the reinforced spanning tree}\label{sub:identities_on_the_reinforced_spanning_tree}
We also include an isomorphism theorem for the reinforced spanning tree, whose existence is constructed with the reinforced Wilson's algorithm defined in Section~\ref{sec:wilson_s_algorithm_with_reinforcement}. It can be seen as a counterpart of classical results (e.g. those in~\cite{MR2064357}) in the random environment corresponding to the mixing measure~\eqref{eq:MixingDensityU_i0}. We will only illustrate this with the most celebrated Kirchoff's law, adapted to the $H^{2|2}$ model.

\begin{theo}[Kirchoff's theorem for the reinforced spanning tree]
Denote by $\mathbb{E}^{\text{re-tree},\widetilde{W}}$ the expectation with respect to the law of the reinforced spanning tree $\mathbb{T}^{\bm{1}}$ on the augmented graph $\widetilde{V}$ with edge weights $(\widetilde{W}_{ij})_{(ij)\in E(\widetilde{V})}$ and initial local times $\bm{1}$. Then for any collection of edges $S\subset E(\widetilde{V})$, 
\begin{equation}
  \mathbb{P}^{\text{re-tree},\widetilde{W}}[S\subset\mathbb{T}^{\bm{1}}]=\left\langle \prod_{(ij)\in S}\widetilde{W}_{ij}e^{u_{i}+u_{j}}(\overline{\psi}_{i}-\overline{\psi}_{j})(\psi_{i}-\psi_{j})\right\rangle_{\widetilde{W},\widetilde{v}_{\delta}=0},
\end{equation}
where the right hand side is the $\text{H}^{2|2}$-expectation form with pinning at $\delta\in\widetilde{V}$, expressed in the horospherical coordinates in Theorem \ref{lemm:Link_VRJP_H22}.
\end{theo}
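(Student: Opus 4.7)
The strategy follows the same annealed philosophy as the proof of Theorem~\ref{th:SUSY_BFSD_Loop}, but the natural route here is through a horospherical decomposition of the right-hand side: I would interpret the $\mathrm{H}^{2|2}$ expectation in the horospherical coordinates of Theorem~\ref{lemm:Link_VRJP_H22}, recognize the inner fermionic Gaussian integral as exactly the fermionic form of Kirchhoff's theorem, and then integrate over the environment to recover the reinforced spanning tree probability via Lemma~\ref{lemm:mixture_of_Markov_Wilson}.

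The first step would apply Lemma~\ref{lemm:mixture_of_Markov_Wilson} to write
\begin{equation*}
  \mathbb{P}^{\text{re-tree},\widetilde W}[S\subset\mathbb{T}^{\bm 1}] = \int_{\mathbb{R}^{|V|}} \mathbb{P}^{\mathrm{UST},\widetilde{\bm u}}[S\subset\mathcal{T}^{\widetilde{\bm u}}]\, d\nu^{\widetilde W,\bm 1}_\delta(\widetilde{\bm u}),
\end{equation*}
where conditionally on the environment $\widetilde{\bm u}$, the quenched law is the UST with symmetric conductances $\widetilde W_{ij}e^{u_i+u_j}$ (the negative off-diagonal entries of $\widetilde{B^{\bm u}}=e^{\widetilde{\bm u}}\widetilde{A^{\bm u}}e^{\widetilde{\bm u}}$). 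I would then invoke the classical fermionic form of Kirchhoff's transfer-current theorem, a consequence of the all-minors matrix-tree theorem: for any system of symmetric conductances $c_{ij}$ on $\widetilde V$ rooted at $\delta$,
\begin{equation*}
  \mathbb{P}^{\mathrm{UST},c}[S\subset\mathcal{T}] = \llbracket\, \prod_{(ij)\in S} c_{ij}(\bar\psi_i - \bar\psi_j)(\psi_i - \psi_j)\,\rrbracket_{\Delta_c,\,\widetilde X_\delta=0},
\end{equation*}
where the right-hand side is a supersymmetric free field expectation with generator $\Delta_c$ and pinning at $\delta$ (the bosonic Gaussian integrates to $1$ by Corollary~\ref{coro:TrivialnessPartitionFunction}, so adding it to the purely fermionic Kirchhoff identity changes nothing). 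Applied with $c_{ij}=\widetilde W_{ij}e^{u_i+u_j}$ and $\Delta_c=\widetilde{B^{\bm u}}$, this rewrites the quenched probability as a supersymmetric Gaussian integral against $\widetilde{B^{\bm u}}$.

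The last step is to identify the resulting mixture of supersymmetric Gaussians (with varying generator $\widetilde{B^{\bm u}}$) against $d\nu^{\widetilde W,\bm 1}_\delta$ with the horospherical form of the $\mathrm{H}^{2|2}$-expectation. In the horospherical coordinates $(t_i,s_i,\psi_i,\bar\psi_i)$ of Theorem~\ref{lemm:Link_VRJP_H22}, the $\mathrm{H}^{2|2}$-measure pinned at $\delta$ factors as the marginal $d\nu^{\widetilde W,\bm 1}_\delta$ on $\widetilde{\bm t}$ (identified with $\widetilde{\bm u}$) times, conditionally on $\widetilde{\bm t}$, a supersymmetric Gaussian measure on $(\widetilde{\bm s},\widetilde{\bar{\bm\psi}},\widetilde{\bm\psi})$ with generator $\widetilde{B^{\bm t}}$. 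Since the target integrand $\prod \widetilde W_{ij}e^{t_i+t_j}(\bar\psi_i-\bar\psi_j)(\psi_i-\psi_j)$ does not depend on $\widetilde{\bm s}$, the conditional bosonic Gaussian integration contributes $1$ and the fermionic integration returns exactly $\mathbb{P}^{\mathrm{UST},\widetilde{\bm t}}[S\subset\mathcal{T}^{\widetilde{\bm t}}]$; integrating this against $d\nu^{\widetilde W,\bm 1}_\delta(\widetilde{\bm t})$ closes the loop with the first step.

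The main obstacle is establishing the horospherical decomposition of the $\mathrm{H}^{2|2}$-measure with the claimed conditional structure---that is, verifying that after the change of variables of Theorem~\ref{lemm:Link_VRJP_H22}, the bosonic and Grassmann Jacobians combine with the action $\frac{1}{2}\widetilde{\bm v}\Delta_{\widetilde W}\widetilde{\bm v}$ to produce a measure of the form $d\nu^{\widetilde W,\bm 1}_\delta(\widetilde{\bm t})$ times a supersymmetric Gaussian on $(\widetilde{\bm s},\widetilde{\bar{\bm\psi}},\widetilde{\bm\psi})$ with generator $\widetilde{B^{\bm t}}$. This factorization is essentially contained in~\cite{MR2728731,MR3420510}; alternatively, one could avoid an explicit horospherical computation by first applying the supersymmetric Bayes formula of Theorem~\ref{th:susy_Bayes} to convert the $\mathrm{H}^{2|2}$-expectation into an annealed supersymmetric free field expectation against $d\nu^{\widetilde W,\bm 1}_\delta$, and then using the fermionic Kirchhoff identity directly inside the quenched supersymmetric free field.
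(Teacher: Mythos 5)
Your proposal is correct and follows essentially the same route as the paper's proof: both combine the mixture representation of the reinforced spanning tree from Lemma~\ref{lemm:mixture_of_Markov_Wilson}, the fermionic matrix-tree (Kirchhoff) identity, and the horospherical factorization of the $\text{H}^{2|2}$ expectation into the mixing measure $d\nu^{\widetilde W,\bm 1}_{\delta}$ times a conditional supersymmetric Gaussian with generator $\widetilde{B^{\bm u}}$ --- the factorization you flag as the ``main obstacle'' is exactly the short explicit computation the paper carries out. One small imprecision in your quenched identity: the two bosonic components do not integrate to $1$ but contribute $D(\widetilde W,\widetilde{\bm u})^{-1}$ in the notation of~\eqref{eq:Determinant_MatrixTree}, which is precisely the normalization turning the fermionic forest sum into $\mathbb{P}^{\mathrm{UST}}[S\subset\mathcal{T}]$; Corollary~\ref{coro:TrivialnessPartitionFunction} only asserts that the full supersymmetric partition function equals $1$.
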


\begin{proof}
Recall that $\nu_{\delta}^{\widetilde{W},\bm{1}}(\bm{u})$ defined in~\eqref{eq:killed_MixingDensityU_i0} is the marginal law of \(\left< \cdot \right>_{\widetilde{W},\widetilde{v}_{\delta}=0} \) in the horospherical coordinates. When completed with the other horospherical variables \(s,\overline{\psi},\psi \), we have
\[\begin{aligned}
&\left< \prod_{(ij)\in S}\widetilde{W}_{ij}e^{u_{i}+u_{j}} (\overline{\psi}_{i}-\overline{\psi}_{j})(\psi_{i}-\psi_{j})\right>_{\widetilde{W},\widetilde{v}_{\delta}=0}\\
={}&\int \frac{1}{D(\widetilde{W},\widetilde{\bm{u}})} \prod_{(ij)\in S} \widetilde{W}_{ij}e^{u_{i}+u_{j}} (\overline{\psi}_{i}-\overline{\psi}_{j})(\psi_{i}-\psi_{j}) \cdot D(\widetilde{W},\widetilde{\bm{u}}) \mathbf{1}_{\{u_{\delta}=s_{\delta}=\psi_{\delta}=\overline{\psi}_{\delta}=0\}}\\
& \times e^{-\frac{1}{2}\sum_{(ij)\in E(\widetilde{V})} \widetilde{W}_{ij}(e^{u_{i}-u_{j}}+e^{u_{j}-u_{i}}-2+e^{u_{i}+u_{j}}[(s_{i}-s_{j})^{2}+2(\overline{\psi}_{i}-\overline{\psi}_{j})(\psi_{i}-\psi_{j})])} \prod_{i\in V} d u_i d s_i d \overline{\psi}_{i} d\psi_{i}  \\
={}&\int \frac{1}{D(\widetilde{W},\widetilde{\bm{u}})}  \prod_{(ij)\in S} \widetilde{W}_{ij}e^{u_{i}+u_{j}}  \\
& \times  \int \prod_{(ij)\in S} (\overline{\psi}_{i}-\overline{\psi}_{j})(\psi_{i}-\psi_{j}) \mathbf{1}_{\{\psi_{\delta}=\overline{\psi}_{\delta}=0\}}e^{-\sum_{(ij)\in E(\widetilde{V})} \widetilde{W}_{ij}e^{u_{i}+u_{j}} (\overline{\psi}_{i}-\overline{\psi}_{j})(\psi_{i}-\psi_{j})}\prod_{i\in V}d\overline{\psi}_i d\psi_i \\
& \times  D(\widetilde{W}, \widetilde{\bm{u}}) \cdot \mathbf{1}_{\{u_{\delta}=s_{\delta}=0\}}e^{-\frac{1}{2}\sum_{(ij)\in E(\widetilde{V})} \widetilde{W}_{ij}(e^{u_{i}-u_{j}}+e^{u_{j}-u_{i}}-2+e^{u_{i}+u_{j}}[(s_{i}-s_{j})^{2}])} \prod_{i\in V} d u_i d s_i.
  \end{aligned}\]
The supersymmetric free field version of isomorphism theorem for the random spanning tree is well-known: 
\[\begin{aligned}
 & \int \prod_{(ij)\in S} (\overline{\psi}_{i}-\overline{\psi}_{j})(\psi_{i}-\psi_{j})\mathbf{1}_{\{\psi_{\delta}=\overline{\psi}_{\delta}=0\}} e^{-\sum_{(ij)\in E(\widetilde{V})} \widetilde{W}_{ij}e^{u_{i}+u_{j}} (\overline{\psi}_{i}-\overline{\psi}_{j})(\psi_{i}-\psi_{j}) } \prod_{i\in V}d\overline{\psi}_i d\psi_i\\
={}& \sum_{B: B\cup S= \widetilde{T}} \prod_{(ij)\in B} W_{ij}e^{u_{i}+u_{j}},
  \end{aligned}\]
where we sum over \(B\subset E\) such that \(B\cup S \) becomes a spanning tree $\widetilde{T}$ of $\widetilde{V}$. The proof of the above identity can be found in e.g.~\cite[Corollary~B.3]{MR4218682}.

By Gaussian integral,
\begin{small}
\[ D(\widetilde{W}, \widetilde{\bm{u}})\int \mathbf{1}_{\{u_{\delta}=s_{\delta}=0\}}e^{-\frac{1}{2}\sum_{(ij)\in E(\widetilde{V})} \widetilde{W}_{ij}(e^{u_{i}-u_{j}}+e^{u_{j}-u_{i}}-2+e^{u_{i}+u_{j}}[(s_{i}-s_{j})^{2}])} \prod_{i\in V} d s_i  du_i = d\nu_{\delta}^{\widetilde{W},\bm{1}}( \bm{u}).\]
\end{small}

Therefore, 
\[\begin{aligned}
  &\left< \prod_{(ij)\in S}\widetilde{W}_{ij}e^{u_{i}+u_{j}} (\overline{\psi}_{i}-\overline{\psi}_{j})(\psi_{i}-\psi_{j})\right>_{\widetilde{W},\widetilde{v}_{\delta}=0}\\
={}&\int \frac{1}{D(\widetilde{W},\widetilde{\bm{u}})} \prod_{(ij)\in S} \widetilde{W}_{ij}e^{u_{i}+u_{j}} \sum_{B: B\cup S= \widetilde{T}} \prod_{(ij)\in B} W_{ij}e^{u_{i}+u_{j}}  d\nu^{\widetilde{W},\bm{1}}_{\delta}(\bm{u}) \\
={}&\int   \mathbb{P}^{\widetilde{\bm{u}}}(S\subset \widetilde{T}) d\nu^{\widetilde{W},\bm{1}}_{\delta}(\bm{u}),
  \end{aligned}\]
where \( \mathbb{P}^{\widetilde{\bm{u}}}\) is the law of the random spanning tree \(\widetilde{T} \) on $\widetilde{V}$ with edge weights \(\widetilde{W}_{ij}e^{u_i+u_j} \). By construction, the law of the reinforced spanning tree is a mixture of random spanning trees with edge weights $\widetilde{W}_{i,j}e^{u_{i}+u_{j}}$ where the collection of $\{u_{i}\}_{i\in \widetilde{V}}$ is sampled by $\nu_{\delta}^{\widetilde{W},\bm{1}} (\bm{u})$, therefore the last display is $\mathbb{P}^{\text{re-tree},\widetilde{W}}[S\subset\mathbb{T}^{\bm{1}}]$.
\end{proof}

\begin{rema}
We make a brief discussion on the expressions in the isomorphism theorems that appeared in this paper.

In Section~\ref{sub:isomorphism_theorems_for_the_reinforced_loop_soup}, we see that the reinforced loop soup is related to the ``bosonic part'' of the $H^{2|2}$ spins, especially its local time is related to the observable $x^{2}+y^{2}$  in the $H^{2|2}$ model. In contrast, Section~\ref{sub:identities_on_the_reinforced_spanning_tree} shows that the configuration of the reinforced spanning tree is related to the ``fermionic part'' of the $H^{2|2}$ spins decorated by the \(u \)-variables, in that its statistics can be derived using functionals involving only the $\xi,\eta,u$ components of the $H^{2|2}$ spins (recall that $\xi_{i}=\psi_{i}e^{u_{i}}$ and $\eta_{i}=\overline{\psi}_{i}e^{u_{i}}$). The $H^{2|2}$ model involves non-trivial interactions between the bosonic and fermonic variables, and the interaction between the reinforced loop soup and the reinforced spanning tree  is complicated (see~\cite[Section~9.8]{MR4789605} for other examples).

One can also ponder on the role of the $z$ variable, which appears in the three random walk $H^{2|2}$ isomorphism theorems in Section~\ref{sec:isomorphism_theorems_for_the_vertex_reinforced_jump_process}. In the standard Markovian case, these isomorphism theorems can be derived using the standard Markovian loop soup isomorphism (see~\cite[Section~6.2]{MR4789605} and especially Remark~6.6 therein).

The supersymmetric free field versions of these isomorphism theorems, provided in Section~\ref{sec:isomorphism_theorems_for_the_vertex_reinforced_jump_process}, are obtained from these theorems by performing a translation by $2\xi\eta+1$, which maps $x^2+y^2$ to $z^2$. Coming to the $H^{2|2}$ model, the $z$ variable plays the role of the initial local time in the Vertex Reinforced Jump Process, as $L_i(s)=\sqrt{S_i(t)+z_i^{2}}$, see e.g.~\eqref{eq:changed_local_times}. This allows one to perform the time change and obtain the three isomorphism theorems in Section~\ref{sec:isomorphism_theorems_for_the_vertex_reinforced_jump_process} in the original time-scale $\mathbf{L}$, and the local times are related to observable $z$ of the $H^{2|2}$ model.

Notice that it is not enough to have only the local times of the reinforced loop soup (i.e. represented by the variables $x^{2}+y^{2}$) to perform the time change: information about the tree (in particular, the variables $\xi,\eta$) is missing. This is the reason why our reinforced Wilson's algorithm defined in Section~\ref{sec:wilson_s_algorithm_with_reinforcement} needs to be generated by a single long trajectory of the Reinforced Vertex Jump Process.\footnote{A tedious computation, not reproduced in this article, shows that if we define the reinforced Wilson's algorithm naïvely using the same recipe as in the standard Markovian case is not partially exchangeable, which means that no suitable time-change can be constructed}
\end{rema}

\section{Some extensions of our results}\label{sec:some_extensions_of_our_results}
Our methodology is quite general and our results with the reinforced Wilson's algorithm generalize naturally in several directions. We list several extensions with the reinforced loop soup, especially we explain below the reconstruction of the reinforced loop soup (the realization of the random loop configuration, not just the occupation time field thereof) from the reinforced Wilson's algorithm. Other possible directions involving e.g. vertex diminished jump process~\cite{MR3520013,MR4255180} and reinforced random interlacements (roughly speaking, reinforced loop soup going through infinity, see~\cite[Section~4.5]{MR2932978}): these can be studied using similar steps and we do not discuss them further in this note.

\subsection{Construction of general reinforced loop soup $\mathbb{L}_{\alpha}$ via the reinforced Wilson's algorithm}\label{subse:construction_of_general_reinforced_loop_soup_texorpdfstring_mathcal_l}
Following an idea of Le Jan of constructing the standard Poissonian loop soup with arbitrary parameter $\alpha>0$ using the standard Markovian Wilson's algorithm, we can define the reinforced loop soup $\mathbb{L}_{\alpha}$ (therefore also the reinforced loop soup occupation time field $\widehat{\mathbb{L}_{\alpha}}$) with any $\alpha>0$ using the reinforced Wilson's algorithm introduced in Section~\ref{sec:wilson_s_algorithm_with_reinforcement}. We postpone the reconstruction of the loop soup until this point because Theorem~\ref{th:SUSY_BFSD_Loop} only concerns the occupation time field, but from this point on we need to be really careful about the erased loops in Wilson's algorithm.

\medskip

$\bullet$ First, we need to be more specific about the loops erased in the reinforced Wilson's algorithm defined in Section~\ref{subse:WilsonWithOneExploration}. Since our Theorem~\ref{th:SUSY_BFSD_Loop} only concerns the occupation time field of the reinforced loop soup $\mathbb{L}_1$, we had the freedom of concatenating loops into a large loop, or decomposing a large loop into smaller loops (including trivial loops that contain no jump). We now follow~\cite[Section~8.3]{MR4789605} (see also~
\cite[Remark~18]{MR2815763}) to explain the reconstruction of the reinforced loop soup $\mathbb{L}_1$ using the decomposition of loops with Poisson-Dirichlet$(0,1)$ distribution.

The reconstruction starts with running the reinforced Wilson's algorithm until the set of erased loops $\mathbb{O}$ is generated using the algorithm in Section~\ref{subse:WilsonWithOneExploration}: this is done almost surely in finite time by the recurrence of the Vertex Reinforced Jump Process. Order the vertices as $i_1,i_2,\dots,i_{|V|}$ in the following way. Start with $E_0=\{\delta\}$ and let $i_1$ be the first vertex that the jump process visits outside of $E_0$ at time $\tau_1^{-}$, and refine $E_0$ as $E_1=\{\delta,i_1\}$. Since the jump process finishes in finite time, there is a last visit time to $i_1$ that we denote by $\tau_1^{+}$. Let $i_2$ be the first vertex that the jump process visits outside of $E_1$ after $\tau_1$ at time $\tau_2^{-}$, define $E_2=\{\delta,i_1,i_2\}$ and denote the last visit time to $i_2$ as $\tau_2^{+}$. Continue this process until the spanning tree is generated: at this point all the vertices are ordered since the loop erasure procedure at time $\tau_k^{+}$ only leaves a tree with vertices $E_k$, so Wilson's algorithm cannot stop when $k\neq |V|$.

After this ordering, regroup the erased loops $\mathbb{O}$ in the following manner: let $\mathbb{O}_{i_1}$ be one large loop formed by the exploration of the jump process between time $\tau_1^{-}$ and $\tau_1^{+}$, then erase (all components forming) $\mathbb{O}_{i_1}$ from $\mathbb{O}$ to obtain a new collection of loops $\mathbb{O}^{{i_1}^{c}}$; next, let $\mathbb{O}_{i_2}$ be one large loop formed by the exploration of the jump process between time $\tau_2^{-}$ and $\tau_2^{+}$ in $\mathbb{O}^{{\{i_1\}}^{c}}$, and erase (all components forming) $\mathbb{O}_{i_2}$ from $\mathbb{O}^{{i_1}^{c}}$ to obtain a new collection of loops $\mathbb{O}^{{\{i_1,i_2\}}^{c}}$; continue until we obtain $\mathbb{O}_{i_{|V|}}$ and $\mathbb{O}^{{V}^{c}}$. Now $\mathbb{O}^{{V}^{c}}$ is empty since any loop in $\mathbb{O}$ must intersect at least once with some vertex in $V$. Therefore, $\mathbb{O}_{i_{1}}, \mathbb{O}_{i_{2}}, \dots, \mathbb{O}_{i_{|V|}}$ form a collection of $|V|$ large loops that has the same occupation time field as $\mathbb{O}$. We then perform the Poisson-Dirichlet$(0,1)$ decomposition of all these large loops $(\mathbb{O}_{i_k})_{k=1,\dots,|V|}$ individually independently as in~\cite[Theorem~8.3]{MR4789605}.
\begin{lemm}\label{lemm:Reinforced_PoissonDirichlet}
The collection of all loops obtained in this manner is exactly $\mathbb{L}_1$, in the sense that it is equal in law to the annealed version of the standard Poissonian loop soup $\mathcal{L}_1$ in the correct random environment given by the mixing measure~\eqref{eq:killed_MixingDensityU_i0}.
\end{lemm}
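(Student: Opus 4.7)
The plan is to reduce the statement to its quenched analog and then anneal using the mixing measure, in complete analogy with how Theorem~\ref{th:SUSY_BFSD_Loop} was obtained from Lemma~\ref{lemm:Upgrade_susy_loop}. First I would fix a realization of the random environment $\widetilde{\bm{u}}$ sampled from the mixing measure $d\nu^{\widetilde{W},\bm{1}}_{\delta}(\widetilde{\bm{u}})$ in~\eqref{eq:killed_MixingDensityU_i0}. By Lemma~\ref{lemm:mixture_of_Markov_Wilson}, conditionally on $\widetilde{\bm{u}}$ the reinforced Wilson's algorithm of Section~\ref{subse:WilsonWithOneExploration} is exactly a standard Markovian Wilson's algorithm driven by a continuous-time Markov jump process on $\widetilde{V}$ with jump rates $\frac{1}{2}\widetilde{W}_{ij}e^{u_{j}-u_{i}}$, killed at $\delta$. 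In this quenched setting, the result of Le Jan recalled at~\cite[Theorem~8.3 and Corollary~8.1]{MR4789605} tells us that the standard Markovian Wilson's algorithm, together with the Poisson-Dirichlet$(0,1)$ decomposition, reconstructs a genuine sample of the Poissonian loop soup $\mathcal{L}^{\widetilde{\bm{u}}}_{1}$ associated to the quenched generator.

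The second step is to check that our specific recipe of grouping the erased loops produces, in the quenched picture, exactly the ``one big loop per vertex'' decomposition used by Le Jan. For this, I would verify that the ordering $i_{1},\dots,i_{|V|}$ constructed from the visit times $\tau_{k}^{-},\tau_{k}^{+}$ coincides, conditionally on $\widetilde{\bm{u}}$, with a valid exploration order for which the quenched Wilson's algorithm can be run in the form of Lemma~\ref{lemm:standard_oneshot_Wilson}. The key point is that when the jump process leaves $E_{k-1}$ for the first time and visits $i_{k}$, the stretch between $\tau_{k}^{-}$ and $\tau_{k}^{+}$ is precisely the excursion which, after loop erasure, attaches the new branch containing $i_{k}$ to the tree $E_{k-1}$. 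All loops erased during this excursion, concatenated at $i_{k}$, form the single big loop $\mathbb{O}_{i_{k}}$ based at $i_{k}$; these correspond to the ``bouquet'' of erased loops rooted at $i_{k}$ in the quenched Wilson's algorithm. Since the statements~\cite[Lemma~4.2]{MR3616205} and~\cite[Theorem~8.3]{MR4789605} are insensitive to the order in which the vertices are explored (see the discussion after Lemma~\ref{lemm:classicalWilson_loopErasure}), we conclude that conditionally on $\widetilde{\bm{u}}$ the Poisson-Dirichlet$(0,1)$ decomposition of $\mathbb{O}_{i_{1}},\dots,\mathbb{O}_{i_{|V|}}$ yields a sample of $\mathcal{L}^{\widetilde{\bm{u}}}_{1}$.

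Finally, I would anneal: the law of the full output (i.e.\ the reinforced random walk trajectory together with the induced loop decomposition) is by construction a mixture, with mixing measure $d\nu^{\widetilde{W},\bm{1}}_{\delta}(\widetilde{\bm{u}})$, of the quenched outputs. Since in each quenched fibre the loop collection has the law of the Poissonian loop soup $\mathcal{L}^{\widetilde{\bm{u}}}_{1}$, the unconditional law is exactly the annealed Poissonian loop soup in the random environment described by~\eqref{eq:killed_MixingDensityU_i0}, which is the definition of $\mathbb{L}_{1}$. The occupation-field version of this identity is a particular consequence, recovering Theorem~\ref{th:SUSY_BFSD_Loop}.

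The main obstacle I expect is the second step: carefully showing that the dynamic ``one-trajectory'' grouping into the $\mathbb{O}_{i_{k}}$'s really coincides, conditionally on the environment, with the per-vertex bouquet of erased loops appearing in Le Jan's construction. This requires using the deterministic nature of the cycle-popping argument in~\cite[Lemma~4.2]{MR3616205}, which guarantees that the identity of the loops produced does not depend on the order in which cycles are popped, combined with the partial exchangeability of the reinforced process~\cite{MR786142} that underlies Lemma~\ref{lemm:mixture_of_Markov_Wilson}. Once this identification is in place, the rest of the proof is a direct application of Le Jan's Poisson-Dirichlet reconstruction quenched by $\widetilde{\bm{u}}$ and an integration against the mixing measure.
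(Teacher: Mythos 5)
Your proposal is correct and follows essentially the same route as the paper: conditionally on the environment $\widetilde{\bm{u}}$ (via Lemma~\ref{lemm:mixture_of_Markov_Wilson}) the construction reduces to Le Jan's reconstruction of $\mathcal{L}_1^{\widetilde{\bm{u}}}$ from the standard Markovian Wilson's algorithm with Poisson-Dirichlet$(0,1)$ decomposition, and one then anneals against the mixing measure~\eqref{eq:killed_MixingDensityU_i0}, using that the Poisson-Dirichlet randomization is independent of the environment. The quenched identification of your grouping $\mathbb{O}_{i_1},\dots,\mathbb{O}_{i_{|V|}}$ with Le Jan's per-vertex loops, which you flag as the delicate step, is exactly what the paper delegates to~\cite[Theorem~8.3]{MR4789605}, so your extra care there is welcome but not a departure from the paper's argument.
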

\begin{proof}
When the Vertex Reinforced Jump Process in the above description is replaced by the standard Markovian jump process, we obtain exactly $\mathcal{L}_1$ by~\cite[Theorem~8.3]{MR4789605}. Now since the law of the Poisson-Dirichlet$(0,1)$ is independent of the random mixing measure~\eqref{eq:killed_MixingDensityU_i0}, integrating $\mathcal{L}_1$ with respect to this random environment as in Theorem~\ref{th:SUSY_BFSD_Loop} yields $\mathbb{L}_1$.
\end{proof}

\medskip

$\bullet$ Suppose now that $\alpha=k\in\mathbb{Z}_{>0}$ is a positive integer. We now explain how to obtain the reinforced loop soup $\mathcal{L}_2$ with $k=2$ with the reinforced Wilson's algorithm in Section~\ref{sec:wilson_s_algorithm_with_reinforcement}, and the general case is similar. Recall the reinforced Wilson's algorithm with one single exploration of some Vertex Reinforced Jump Process from Section~\ref{subse:WilsonWithOneExploration}, and modify it in the following way: when the spanning tree $\mathbb{T}$ (and the dual random collection of erased loops $\mathbb{O}$) is formed from this algorithm, continue the same Vertex Reinforced Jump Process until the next time it goes back to the root vertex $\delta$, and start the loop erasure procedure afresh until it generates another spanning tree $\mathbb{T}'$ (and another random collection of erased loops $\mathbb{O}'$ dual to $\mathbb{T}'$). Notice that $\mathbb{O}$ and $\mathbb{O}'$ are not independent, but for each of them, we can first regroup the loops as $\mathbb{O}_{i_{1}}, \mathbb{O}_{i_{2}}, \dots, \mathbb{O}_{i_{|V|}}$ and $\mathbb{O}'_{j_{1}}, \mathbb{O}'_{j_{2}}, \dots, \mathbb{O}'_{j_{|V|}}$ like in Lemma~\ref{lemm:Reinforced_PoissonDirichlet} (where the ordering of the vertices might differ), perform Poisson-Dirichlet$(0,1)$ decompositions independently on each of these $2|V|$ large loops and obtain a loop soup. We claim that:
\begin{lemm}\label{lemm:Reinforce_loopsoup_integer}
The collection of all loops obtained in this manner is exactly $\mathbb{L}_2$, in the sense that it is equal in law to the annealed version of the standard Poissonian loop soup $\mathcal{L}_2$ in the correct random environment given by the mixing measure~\eqref{eq:killed_MixingDensityU_i0}.
\end{lemm}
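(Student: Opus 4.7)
The plan is to reduce the statement to the standard Markovian case by integrating over the random environment, exactly as in the proof of Lemma~\ref{lemm:Reinforced_PoissonDirichlet} and Theorem~\ref{th:SUSY_BFSD_Loop}. Sample the environment $\widetilde{\bm{u}}$ from the mixing measure $d\nu_{\delta}^{\widetilde{W},\bm{1}}$ defined in~\eqref{eq:killed_MixingDensityU_i0}. Conditionally on $\widetilde{\bm{u}}$, by Lemma~\ref{lemm:mixutre_of_markov_jump_processes}, the Vertex Reinforced Jump Process driving the reinforced Wilson's algorithm becomes a genuine Markov jump process with static rates $\frac{1}{2}\widetilde{W}_{ij}e^{u_j - u_i}$, so that the accumulated local times play no role in future transitions.

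The main step is to argue that, conditional on $\widetilde{\bm{u}}$, the two rounds of the modified algorithm are independent copies of the single-exploration standard Wilson's algorithm of Lemma~\ref{lemm:standard_oneshot_Wilson}. After the first spanning tree $\mathbb{T}$ is generated, we continue the Markov jump process until its first return to $\delta$; by the strong Markov property (valid in the quenched Markov setting) this return time is a stopping time, and the process restarted from $\delta$ at this time is an independent copy of the original Markov chain starting at $\delta$ with the same generator. Running the loop-erasure exploration on this restarted chain precisely reproduces an independent instance of the single-exploration standard Wilson's algorithm, which by Lemma~\ref{lemm:standard_oneshot_Wilson} outputs an independent pair $(\mathbb{T}', \mathbb{O}')$ distributed as the standard Markovian Wilson's output. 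Hence, conditional on the environment, we obtain two independent realizations of the quenched standard Wilson's algorithm.

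To conclude, we invoke the integer-$\alpha$ version of Le Jan's reconstruction from Wilson's algorithm (the $\alpha = k$ extension of~\cite[Section~8.3]{MR4789605}), which asserts that $k$ independent standard Wilson's algorithm runs, each refined by the large-loop grouping procedure from Lemma~\ref{lemm:Reinforced_PoissonDirichlet} and independent Poisson-Dirichlet$(0,1)$ decompositions, produce exactly the Poissonian loop soup $\mathcal{L}_k$. With $k = 2$ applied to the quenched setting, the resulting collection of loops is distributed as the quenched $\mathcal{L}_2^{\widetilde{\bm{u}}}$ associated to the generator $\frac{1}{2}e^{-\widetilde{\bm{u}}}\widetilde{A^{\bm{u}}}e^{\widetilde{\bm{u}}}$. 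Integrating over $\widetilde{\bm{u}}$ against the mixing measure then yields $\mathbb{L}_2$ by definition, since the Poisson-Dirichlet ingredients are independent of the environment. The main subtlety lies in justifying the quenched strong Markov restart and recognizing that the two rounds, although coupled through the common environment, become independent after conditioning; the fact that the vertex orderings $(i_k)$ and $(j_k)$ used to regroup the erased loops into large loops may differ is harmless because the Poisson-Dirichlet decomposition is applied independently to each large loop.
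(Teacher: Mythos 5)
Your proof is correct and takes essentially the same route as the paper's: reduce to the quenched setting via the mixture-of-Markov-processes representation, observe that the two rounds are (conditionally) independent standard Wilson runs so that Le Jan's reconstruction plus Poisson superposition gives the quenched $\mathcal{L}_2$, and then anneal against the mixing measure~\eqref{eq:killed_MixingDensityU_i0} using that the Poisson--Dirichlet decomposition is independent of the environment. The only difference is cosmetic: you make explicit the quenched strong Markov restart at $\delta$, which the paper leaves implicit when it replaces the Vertex Reinforced Jump Process by a standard Markov jump process.
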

\begin{proof}
When the Vertex Reinforced Jump Process in the above description is replaced by the standard Markovian jump process, we obtain exactly $\mathcal{L}_2$ by the Poisson point process definition of the Poissonian loop soup, Section~\ref{subse:poissonian_loop_soup}. Now since the reinforced Wilson's algorithm is generated by a single long Vertex Reinforced Jump Process, the mixing measure of the random environment is~\eqref{eq:killed_MixingDensityU_i0}. Furthermore, the concatenation process between the two parts of the reinforced Wilson's algorithm as well as the Poisson-Dirichlet$(0,1)$ law are independent of the mixing measure~\eqref{eq:killed_MixingDensityU_i0}, therefore we obtain $\mathbb{L}_{2}$ via the description above by integrating in the random environment similarly to Theorem~\ref{th:SUSY_BFSD_Loop}.
\end{proof}
In general, for $k>2$, it suffices to continue the reinforced Wilson's algorithm $k$ times until it forms $k$ spanning trees, and repeat the procedure similarly as above to obtain the reinforced loop soup $\mathbb{L}_k$.

\medskip

$\bullet$ Finally, suppose that $\alpha\notin\mathbb{Z}_{>0}$ and let $k\in\mathbb{Z}_{>0}$ be such that $k-1<\alpha<k$. It is known for the standard Poissonian loop soup that one can decrease the parameter $\alpha$ using the thinning procedure of the Poisson point process. We explain the case where $k=1$ (thus $\alpha\in(0,1)$) since the general case is similar. Consider the reinforced loop soup $\mathbb{L}_1$ obtained in Lemma~\ref{lemm:Reinforced_PoissonDirichlet}. Construct a new loop soup $\mathbb{L}'_{\alpha}$ from $\mathbb{L}_1$ using the following procedure: for each loop in $\mathbb{L}_1$, it has probability $\alpha$ of being in $\mathbb{L}'_{\alpha}$ and probability $1-\alpha$ of being discarded, independently of all other loops in $\mathbb{L}_1$.
\begin{lemm}
The law of $\mathbb{L}'_{\alpha}$ is exactly the reinforced loop soup $\mathbb{L}_{\alpha}$, in the sense that it is the annealed version of standard Poissonian loop soup $\mathcal{L}_{\alpha}$ in the correct mixing environment~\eqref{eq:killed_MixingDensityU_i0}.
\end{lemm}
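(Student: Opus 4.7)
The approach is conceptually very short because thinning commutes with conditioning on the environment: once we know the reinforced loop soup with parameter $1$ is, in the quenched picture, a genuine Poisson point process, the classical Poisson thinning theorem does all the work fiberwise, and annealing finishes the argument. My plan is therefore to condition on the random environment $\widetilde{\bm{u}}$ drawn from the mixing measure~\eqref{eq:killed_MixingDensityU_i0}, apply the standard Markovian thinning in this quenched environment, and then integrate.

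Concretely, by Lemma~\ref{lemm:Reinforced_PoissonDirichlet}, the reinforced loop soup $\mathbb{L}_1$ is, in distribution, the annealed version of the standard Markovian Poissonian loop soup $\mathcal{L}_1^{\widetilde{\bm{u}}}$ with generator $\frac{1}{2}e^{-\widetilde{\bm{u}}}\widetilde{A^{\bm{u}}}e^{\widetilde{\bm{u}}}$. In particular, conditionally on $\widetilde{\bm{u}}$, the law of $\mathbb{L}_1$ is a Poisson point process on the space of unbased loops with intensity $\mu^{\circ,\widetilde{\bm{u}}}$ (the based-or-unbased loop measure associated to that generator, in the sense of Section~\ref{subse:poissonian_loop_soup}). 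The key observation is then that the Bernoulli coins $(B_l)_{l\in \mathbb{L}_1}$ of parameter $\alpha$ used to thin $\mathbb{L}_1$ into $\mathbb{L}'_\alpha$ are, by construction, independent of both $\mathbb{L}_1$ and of the environment $\widetilde{\bm{u}}$; they are auxiliary randomness introduced only after $\mathbb{L}_1$ has been generated.

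Applying the classical thinning theorem for Poisson point processes in the quenched environment, the conditional law of $\mathbb{L}'_\alpha$ given $\widetilde{\bm{u}}$ is that of a Poisson point process with intensity $\alpha\,\mu^{\circ,\widetilde{\bm{u}}}$, which is exactly the quenched law of $\mathcal{L}_\alpha^{\widetilde{\bm{u}}}$. Integrating this identity against $d\nu_{\delta}^{\widetilde{W},\bm{1}}(\widetilde{\bm{u}})$ (exactly as in the proofs of Lemma~\ref{lemm:Reinforced_PoissonDirichlet} and Lemma~\ref{lemm:Reinforce_loopsoup_integer}) shows that $\mathbb{L}'_\alpha$ is the annealed version of $\mathcal{L}_\alpha$ in the mixing environment~\eqref{eq:killed_MixingDensityU_i0}, which is by definition $\mathbb{L}_\alpha$. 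I do not anticipate any real obstacle here: the only care point is the independence of the thinning randomness from the environment, which is built into the construction, and the extension from $\alpha\in(0,1)$ to general $\alpha>0$ (say $k-1<\alpha<k$) is carried out by starting from $\mathbb{L}_k$ produced in Lemma~\ref{lemm:Reinforce_loopsoup_integer} and thinning each loop with probability $\alpha/k$ instead.
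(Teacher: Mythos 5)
Your proposal is correct and follows essentially the same route as the paper: quenched on the environment $\widetilde{\bm{u}}$, the independent Bernoulli thinning turns the Poisson point process of intensity $\mu^{\circ,\widetilde{\bm{u}}}$ into one of intensity $\alpha\mu^{\circ,\widetilde{\bm{u}}}$, i.e.\ $\mathcal{L}_\alpha^{\widetilde{\bm{u}}}$, and since the coins are independent of the mixing measure~\eqref{eq:killed_MixingDensityU_i0}, integrating over $\widetilde{\bm{u}}$ gives $\mathbb{L}_\alpha$. Your remark on the general case $\alpha\in(k-1,k)$ (thinning $\mathbb{L}_k$ with retention probability $\alpha/k$) also matches the paper's concluding comment.
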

\begin{proof}
It is known that $\mathcal{L}_{\alpha}$ can be obtained from $\mathcal{L}_{1}$ when $\alpha\in(0,1)$ from a thinning procedure exactly as described above: this directly comes from the definition of the Poissonian loop soup $\mathcal{L}_{\alpha}$ as a Poisson point process with intensity $\alpha \mu^{\circ}$, see Section~\ref{subse:poissonian_loop_soup}. Since the thinning procedure is independent of the mixing environment~\eqref{eq:killed_MixingDensityU_i0}, integrating yields the reinforced loop soup $\mathbb{L}_{\alpha}$.
\end{proof}
In general, if $\alpha\in(k-1,k)$, one can first construct the reinforced loop soup $\mathbb{L}_k$ as in Lemma~\ref{lemm:Reinforce_loopsoup_integer} then perform a similar thinning procedure to get the reinforced loop soup $\mathbb{L}_{\alpha}$ with non-integer parameter $\alpha$ from $\mathbb{L}_k$.

\subsection{Generalized reinforced loop soup isomorphism theorems for integer-valued parameters}
We can generalize the reinforced loop soup isomorphism for the $\text{H}^{2|2}$-model to general $\text{H}^{2k|2k}$ models for any integer $k\geq 1$, using an extension of the reinforced Wilson's algorithm described in Section~\ref{subse:construction_of_general_reinforced_loop_soup_texorpdfstring_mathcal_l}.

First, let us recall briefly the definition of the $\text{H}^{n|2m}$ model with $n+m>0$ following the notations of~\cite[Section~2.4]{MR4218682}. It is defined similarly to the $\text{H}^{2|2}$-model, but now we deal with a $(n+2m)$-component supervector
\begin{equation*}
    \bm{v}=(\phi_1,\phi_2,\dots,\phi_n,\xi_1,\eta_1,\dots,\xi_m,\eta_m)
\end{equation*}
with the $\phi_i$ coordinates bosonic variables and $\xi_i,\eta_i$ coordinates fermionic variables. The notation $\bm{v}$ will denote a superspin in $\text{H}^{n|2m}$ instead of $\text{H}^{2|2}$ in this subsection only. Introduce the distinguished vector in the positive branch as before,
\begin{equation}\label{eq:Distinguished_z}
    \forall 1\leq i\leq n, \quad z_i=\left(\sum_{j=1}^{n}\phi_j^2+2\sum_{k=1}^{m}\xi_k\eta_k+1\right)^{\frac{1}{2}}.
\end{equation}
The inner product on $\text{H}^{n|2m}$ is defined as\footnote{Notice that this definition forces the number of fermionic variables to be even, although standard Poissonian loop soup isomorphism theorems exist for half-integers $\alpha$ (and not only integers), we cannot generalize them directly to the supersymmetric hyperbolic sigma models with our setup.}
\begin{equation*}
    \bm{v}\cdot\bm{v}'=\sum_{j=1}^{n}\phi_{j}\phi'_{j}+\sum_{k=1}^{m}(\xi_k\eta'_k+\xi'_k\eta_k)-\sum_{l=1}^{n}z_{l}z'_{l},
\end{equation*}
and notice that this implies the hyperbolic constraint $-\bm{v}^2=\bm{1}$. The energy or action functional of the $\text{H}^{n|2m}$ model is given by
\begin{equation*}
    \frac{1}{2}\bm{v}\Delta_W\bm{v}=\sum_{i,j\in V}v_i (\Delta_{W})_{ij} v_j
\end{equation*}
with the Berezin integral form (where $\phi_{i,j}$, $\xi_{i,j}$ and $\eta_{i,j}$ are the $j$-th components of the supervectors $\phi_i, \xi_i$ and $\eta_i$ with $i\in V$)
\begin{equation*}
    D\mu(\bm{v})=\prod_{i\in V}\left(\frac{1}{2\pi z_i}\prod_{j=1}^{n}d\phi_{i,j}\prod_{k=1}^{m}d\xi_{i,k}d\eta_{i,k}\right).
\end{equation*}
The (super-)expectations with pinning condition are defined analogously by attaching an extra vertex $\delta\in\widetilde{V}$ and imposing that $v_{\delta}=0$.

We have the following isomorphism theorem for the reinforced loop soup $\mathbb{L}_{k}$ with integer-valued parameters $k\in\mathbb{Z}_{>0}$:
\begin{theo}[Generalized reinforced loop soup isomorphism theorem]
Let $\widehat{\mathbb{L}_k}$ be the occupation time field of the reinforced loop soup $\mathbb{L}_{k}$ with initial local times $\bm{1}$. Then for any smooth bounded function $g$ with rapid decay,
\begin{equation*}
    \mathbb{E}^{\text{re-soup},\widetilde{W}}[g(\widehat{\mathbb{L}_{k}})]=\left\langle g\Big((\sum_{j=1}^{2k}\phi_{i,j}^2)_{i\in V}\Big)\right\rangle_{\widetilde{W},\widetilde{v}_{\delta}=0}
\end{equation*}
where $\langle\cdot\rangle_{\widetilde{W},\widetilde{v}_{\delta}=0}$ is the (super-)expectation with pinning at $\delta$ of the $\text{H}^{2k|2k}$ model (and $\phi_{i,j}$ is the $j$-th component of the supervector $\phi_i$ with $i\in V$). 
\end{theo}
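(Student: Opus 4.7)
The strategy is to reproduce the four-step argument of Theorem~\ref{th:SUSY_BFSD_Loop}, replacing the $\alpha=1$ loop soup identity and the $\text{H}^{2|2}$ model by their $\alpha=k$ and $\text{H}^{2k|2k}$ analogs. By Lemma~\ref{lemm:Reinforce_loopsoup_integer} (and its extension to arbitrary positive integer $k$ explained immediately afterwards in Section~\ref{subse:construction_of_general_reinforced_loop_soup_texorpdfstring_mathcal_l}), the reinforced loop soup $\mathbb{L}_k$ is the annealed version of the standard Poissonian loop soup $\mathcal{L}_k^{\widetilde{\bm{u}}}$ in the random environment $d\nu_\delta^{\widetilde{W},\bm{1}}$, so that
\begin{equation*}
\mathbb{E}^{\text{re-soup},\widetilde{W}}[g(\widehat{\mathbb{L}_k})] = \int \mathbb{E}^{\text{loop},\widetilde{\bm{u}}}\big[g\big(\widehat{\mathcal{L}_k^{\widetilde{\bm{u}}}}\big)\big]\, d\nu_\delta^{\widetilde{W},\bm{1}}(\widetilde{\bm{u}}).
\end{equation*}
This reduces the problem to analyzing the quenched integrand and then integrating against the mixing measure via a supersymmetric Bayes identity.

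\textbf{Higher-dimensional supersymmetric Dynkin.} Next one generalizes Lemma~\ref{lemm:Upgrade_susy_loop} to the $\alpha=k$ setting. Since $\mathcal{L}_k$ is the superposition of $k$ i.i.d.\ copies of $\mathcal{L}_1$, iterating Theorem~\ref{lemm:Classical_Loop_Dynkin} identifies the occupation field $\widehat{\mathcal{L}_k}$ in law with $\tfrac12\sum_{j=1}^{2k}(\bm{\phi}^{(j)})^2$ for $2k$ independent Gaussian free fields $\bm{\phi}^{(1)},\dots,\bm{\phi}^{(2k)}$. Introducing $k$ pairs $(\bm{\xi}^{(l)},\bm{\eta}^{(l)})$ of Grassmann variables to absorb the resulting $\det(\cdot)^{k}$ factor arising from fermionic integration (the same device as in Lemmas~\ref{lemm:ClassicalSusy_BFSD} and~\ref{lemm:Upgrade_susy_loop}), and then performing the passage from $\widetilde{B^{\bm{u}}}$ to $\widetilde{A^{\bm{u}}}$ together with the local-time rescaling~\eqref{eq:Relation_AB_localtimes} to eliminate the factor $\tfrac12$, one obtains
\begin{equation*}
\left\llbracket \mathbb{E}^{\text{loop},\widetilde{\bm{u}}}\big[g\big(\widehat{\mathcal{L}_k^{\widetilde{\bm{u}}}}\big)\big] \right\rrbracket_{\widetilde{A^{\bm{u}}},\widetilde{X}_\delta=0} = \left\llbracket g\Big(\Big(\sum_{j=1}^{2k}\phi_{i,j}^2\Big)_{i\in\widetilde{V}}\Big) \right\rrbracket_{\widetilde{A^{\bm{u}}},\widetilde{X}_\delta=0},
\end{equation*}
where the brackets now refer to the supersymmetric free field expectation with $2k$ bosonic components $\phi_{i,j}$ and $k$ fermionic pairs $(\xi_{i,l},\eta_{i,l})$ at each vertex, twisted by $\widetilde{A^{\bm{u}}}$ and with pinning $\widetilde{X}_\delta=0$.

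\textbf{Bayes formula and main obstacle.} Parallel to its role in the proof of Theorem~\ref{th:SUSY_BFSD_Loop}, one now needs the $\text{H}^{2k|2k}$ analog of the supersymmetric Bayes formula of Theorem~\ref{th:susy_Bayes}, exchanging the $\text{H}^{2k|2k}$ expectation with pinning at $\delta$ for the higher-dimensional supersymmetric free field expectation in random environment $d\nu_\delta^{\widetilde{W},\bm{1}}$. The algebraic engine of Theorem~\ref{th:susy_Bayes}---cancellation of the cross term $-\widetilde{W}_{ij}z_iz_j$ in the hyperbolic action against the matching term in the Gaussian exponent of $d\nu^{\widetilde{W},\widetilde{\bm{z}}}_\delta$, followed by elimination of the $z$-coordinate via the hyperbolic constraint---goes through verbatim under the $\text{H}^{2k|2k}$ relation
\begin{equation*}
z_i^2 = \sum_{j=1}^{2k}\phi_{i,j}^2 + 2\sum_{l=1}^{k}\xi_{i,l}\eta_{i,l} + 1,
\end{equation*}
leaving at the end the quadratic form $-\tfrac12 \widetilde{\bm{X}}\widetilde{A^{\bm{u}}}\widetilde{\bm{X}}$ for the augmented supersymmetric inner product. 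The main (and essentially only) technical hurdle is to carry out this Bayes computation carefully in the $\text{H}^{2k|2k}$ inner product and Berezin form, and to confirm the $\text{H}^{2k|2k}$ analog of the Parisi--Sourlas localization (Lemma~\ref{lemm:Parisi-Sourlas})---in particular the trivialness of the partition function---so that $d\nu^{\widetilde{W},\widetilde{\bm{z}}}_\delta$ still integrates to $1$ at the last step. Combining the three steps above then yields the claimed identity exactly as at the end of the proof of Theorem~\ref{th:SUSY_BFSD_Loop}.
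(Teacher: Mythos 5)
Your proposal is correct and follows essentially the same route as the paper: represent $\widehat{\mathbb{L}_k}$ as the annealed occupation field of the quenched Poissonian loop soup $\mathcal{L}^{\widetilde{\bm u}}_k$ via the single-trajectory reinforced Wilson construction, upgrade the $\alpha=k$ loop-soup Dynkin identity ($2k$ Gaussian fields, $k$ Grassmann pairs absorbing $\det^k$, then the $\widetilde{B^{\bm u}}\to\widetilde{A^{\bm u}}$ rescaling) to a $(2k,2k)$-supersymmetric free field statement, and conclude with the $\text{H}^{2k|2k}$ Bayes formula and the normalization of $d\nu^{\widetilde W,\widetilde{\bm z}}_\delta$. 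The only cosmetic difference is the order of the steps and your derivation of the $\alpha=k$ identity from superposition of $k$ independent copies of $\mathcal{L}_1$ rather than by citation, which is equivalent.
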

\begin{proof}
We only indicate the main steps of the proof and leave the details to the reader.

The core of the proof is to generalize the supersymmetric Bayes formula, Theorem~\ref{th:susy_Bayes}, to the $\text{H}^{2k|2k}$ model. This requires introducing the corresponding $(2k,2k)$-supersymmetric free field, which is defined analogously to the $(2,2)$-supersymmetric free field of Section~\ref{subse:supersymmetric_free_field}. Then one should check that the cancellations in the proof of Theorem~\ref{th:susy_Bayes} generalize here, which is an elementary computation. The upshot is
\begin{equation*}
    \left\langle\int \,g\,d\nu^{\widetilde{W},\widetilde{\bm{z}}}_{\delta}(\widetilde{\bm{u}})\right\rangle_{\widetilde{W},v_{\delta}=0}=\int \llbracket\,g\,\rrbracket_{\widetilde{A^{\bm{u}}},X_{\delta}=0} d\nu^{\widetilde{W},\bm{1}}_{\delta}(\widetilde{\bm{u}}),
\end{equation*}
for any smooth function function $g$ with rapid decay, and now the brackets $\llbracket \cdot\rrbracket$ and $\langle\cdot\rangle$ denote respectively $(2k,2k)$-supersymmetric free field expectation with respect to $\widetilde{\bm{X}}$ and $\text{H}^{2k|2k}$ expectation with respect to $\widetilde{\bm{v}}$ (with pinning conditions in the subscript). The definition of $\llbracket \cdot\rrbracket_{\widetilde{A^{\bm{u}}},X_{\delta}=0}$ is similar to the one in the beginning of Section~\ref{sec:a_supersymmetric_bayes_formula}, and $d\nu^{\widetilde{W},\widetilde{\bm{z}}}_{\delta}(\widetilde{\bm{u}})$ is still the mixing measure in~\eqref{eq:killed_MixingDensityU_i0} only that $\widetilde{\bm{z}}$ is now the distinguished $\text{H}^{2k|2k}$ vector of~\eqref{eq:Distinguished_z}.

Then, we should generalize the isomorphism theorems for the standard Poissonian loop soup for $\mathcal{L}_{k}$~\cite[Remark~6.6(e)]{MR4789605} to its supersymmetric free field counterpart. This involves only rewriting some determinants using the fermionic variables, and is done similarly to the proof of Lemma~\ref{lemm:ClassicalSusy_BFSD}. The result is
\begin{equation*}
    \mathbb{E}^{\text{soup},\widetilde{\bm{u}}}[g(\widehat{\mathcal{L}^{\bm{\widetilde{u}}}_{k}})]=\left\llbracket g\Big((\sum_{j=1}^{2k}\phi_{i,j}^2)_{i\in V}\Big)\right\rrbracket_{\widetilde{A^{\bm{u}}},\widetilde{v}_{\delta}=0}.
\end{equation*}

Furthermore, we should use the interpretation of the reinforced Wilson's algorithm as a mixture of standard Markovian Wilson's algorithms in some random environment. Since the generalized reinforced Wilson's algorithm in Section~\ref{subse:construction_of_general_reinforced_loop_soup_texorpdfstring_mathcal_l} used to generate the reinforced loop soup $\mathbb{L}_{k}$ comes from one single Vertex Reinforced Jump Process starting from the root vertex $\delta$, the mixing measure is the same as~\eqref{eq:killed_MixingDensityU_i0}. Therefore, putting all the pieces together, the upshot is
\begin{equation*}
\begin{split}
    \mathbb{E}^{\text{re-soup},\widetilde{W}}[g(\widehat{\mathbb{L}_{k}})]&=\int \mathbb{E}^{\text{soup},\widetilde{\bm{u}}}[g(\widehat{\mathcal{L}^{\bm{\widetilde{u}}}_{k}})]d\nu_{\delta}^{\widetilde{W},\bm{1}}(\widetilde{\bm{u}})\\
    &=\int \left\llbracket g\Big((\sum_{j=1}^{2k}\phi_{i,j}^2)_{i\in V}\Big)\right\rrbracket_{\widetilde{A^{\bm{u}}},\widetilde{v}_{\delta}=0}d\nu_{\delta}^{\widetilde{W},\bm{1}}(\widetilde{\bm{u}})\\
    &=\left\langle g\Big((\sum_{j=1}^{2k}\phi_{i,j}^2)_{i\in V}\Big)\int d\nu_{\delta}^{\widetilde{W},\widetilde{\bm{z}}}(\widetilde{\bm{u}})\right\rangle_{\widetilde{W},\widetilde{v}_{\delta}=0}
\end{split}
\end{equation*}
and the proof is complete since $d\nu_{\delta}^{\widetilde{W},\widetilde{\bm{z}}}$ integrates to $1$.
\end{proof}

\subsection*{Conflict of interest}
The authors declare no conflicts of interest.

\bibliographystyle{alpha}
\newcommand{\etalchar}[1]{$^{#1}$}

\end{document}